\documentclass[10pt,twoside, a4paper]{article}

\usepackage{nemish}

\usepackage{hyperref} 

\title{
Mesoscopic Spectral CLT for Block Correlated Random Matrices
}

\usepackage[auth-sc-lg,affil-sl]{authblk}
\author{Torben Kr\"{u}ger\footnote{\hspace{0.15cm} Partially supported by VILLUM FONDEN research grant no. 29369 and   Novo Nordisk Fonden Project Grant 0064428\newline  E-mail addresses: \href{mailto:torben.krueger@fau.de}{torben.krueger@fau.de} (T. Kr\"{u}ger), \href{mailto:ynemish@ucsd.edu}{ynemish@ucsd.edu} (Yu. Nemish)}}
\affil{Department of Mathematical Sciences, University of Copenhagen}
\affil{Department of Mathematics, FAU Erlangen-N\"{u}rnberg}
\author{Yuriy Nemish}
\affil{Department of Mathematics, University of California, San Diego}

\date{\today} 

\begin{document}

\maketitle

\thispagestyle{empty} 

\begin{abstract}
    For random matrices with block correlation structure we show that the fluctuations of linear eigenvalue statistics are Gaussian on all mesoscopic scales with universal variance which coincides with that of the Gaussian unitary or Gaussian orthogonal ensemble, depending on the symmetry class of the model.
   The main tool used for determining this variance is a two-point version of the matrix-valued Dyson equation, that  encodes the asymptotic behavior of the product of resolvents at different spectral parameters.
\end{abstract}

\noindent \emph{Keywords: Mesoscopic Central Limit Theorem, Hermitian random matrix, linear matrix pencil} \\
\textbf{AMS Subject Classification: 60B20, 15B52}

\section{Introduction}
\label{sec:intro}

The eigenvalues of Hermitian random matrices form a strongly correlated point process on the real line. To gain insights into the underlying correlations we study linear statistics of this process. 
More generally, for $N$ real random variables $\lambda_{1}, \dots, \lambda_{N}$ their linear statistics is  $L_{N}(f):=\sum_{i=1}^{N} f(\lambda_i)$ for some test function $f$. If $\frac{1}{N}L_{N}(f)$ converges as $N \to \infty$ and becomes nonrandom, we say that the law of large numbers holds. Such results are well know not only in the case when $\lambda_1, \dots, \lambda_n$ are independent, but also when they are the eigenvalues of an $N \times N$ random matrix ensemble. In this case $\lim_{N \to \infty}\frac{1}{N}L_{N}(f) = \int f(x) \rho(x) d x$, where $\rho$ is the asymptotic spectral density. 

The next important question to ask about $\frac{1}{N}L_{N}(f)$ concerns the size and distribution of its fluctuations around the limit. In the case of independent random variables the central limit theorem (CLT) states that these fluctuations are Gaussian of order $N^{-1/2}$. The strong correlation between the eigenvalues of random matrices, however,  typically reduces the fluctuation to the order $N^{-1}$, while the distribution remains Gaussian. 
Such results were proved, e.g., for invariant ensembles \cite{Joha98}, Wigner matrices with non-Gaussian i.i.d. entries \cite{KhorKhorPast96, BaiYao05}, sample covariance matrices \cite{BaiSilv04}, ensembles with external source \cite{JiLee20}, and polynomials of several independent random matrices \cite{BeliCapiDallFevr}.

The law of large numbers and CLT described above provide information about the eigenvalues on global scales $\eta =O(1)$. To resolve the eigenvalue distribution on mesoscopic scales $ N^{-1} \ll \eta \le 1$ above the typical spacing distance, or even microscopic scales of order $\eta = O(N^{-1})$, the compactly supported test function $f$ with $\int f(x) \, dx =1$ is rescaled  to capture an $\eta$-sized neighbourhood around $x_0$, i.e.,
\[
f_\eta(x) := f \bigg(\frac{x-x_0}{\eta}\bigg)\,.
\]
On mesoscopic scales the linear statistics for $f_\eta$ still involves a large number of  $O(N \eta)$ eigenvalues and thus  the law of large numbers now takes the form $\frac{1}{N \eta} L_{N}(f_\eta) \to \rho(x_0)$.  Such local laws have been established for a large variety of random matrix models, including Wigner matrices \cite{ErdoSchlYau09a}, deformed Wigner matrices \cite{LeeSchnStetYau16,HeKnowRose18}, matrices with variance profiles \cite{AjanErdoKrug17} and correlations \cite{AjanErdoKrug19,ErdoKrugSchr19}, invariant ensembles \cite{BourErdoYau14a}, and polynomials in several  random matrices \cite{Ande15,ErdoKrugNemi_Poly}. 
 
The fluctuation of linear eigenvalue statistics exhibits a striking universality phenomenon on mesoscopic scales. Not only its order  and  Gaussian distribution become universal, but also the corresponding variance, namely
\begin{equation}
  \label{mesoscopic CLT - Intro}
L_{N}(f_\eta)- \EE L_{N}(f_\eta) \to \mathcal{N} (0, v_f)\,, \quad v_f = \frac{1}{2\pi^2\beta} \int_{-\infty}^\infty\int_{-\infty}^\infty \bigg( \frac{f(x)-f(y)}{x-y}\bigg)^2 dx dy\,,
\end{equation}
is independent of $\eta$ and $x_0$. The only signature of the underlying random matrix ensemble still present in these fluctuations is its symmetry class $\beta$, where $\beta =1$ corresponds to real symmetric  and $\beta =2$ to complex Hermitian matrices. 
Versions of the mesoscopic CLT \eqref{mesoscopic CLT - Intro} have been established for a wide class of random matrix models. These include the classical Gaussian ensembles \cite{BoutKhor99a,FyodKhorSimm16},  Wigner matrices, for which the CLT has been shown covering an increasing range of mesoscopic regimes \cite{BoutKhor99b,HeKnow17,LodhSimm15}, Wigner-type matrices \cite{Riab}, invariant ensembles  \cite{BekeLodh18,Lamb18}, deformed Wigner matrices \cite{LiSchnXu21}, functions of Wigner matrices \cite{CipoErdoSchr23}, Dyson Brownian motion \cite{DuitJoha18,HuanLand19}, band matrices \cite{ErdoKnow15b}, and free sums \cite{BaoSchnXu22}.
The mesoscopic CLT is also used to establish fluctuations of individual eigenvalues \cite{LandLopaSoso23}.
For non-Hermitian random matrices with independent entries  and Coulomb gases in dimension two, mesoscopic CLTs have recently been proven in \cite{CipoErdoSchr} and \cite{BaueBourNikuYau19}, respectively. 

While the local law has been established for Hermitian and non-Hermitian models with very general correlation structure among the entries of the underlying random matrix, the current proofs of mesoscopic CLTs either assume independent entries or isotropic randomness in the matrix, which is invariant under either the unitary or orthogonal group. In this work we establish the CLT \eqref{mesoscopic CLT - Intro} for linear matrix pencils  in several random matrices, which do not satisfy either  condition. Instead they belong to a class of random matrix ensembles with non-isotropic  block correlations among their entries.

For matrices $\Xb_0,\Xb_1, \dots, \Xb_d \in \CC^{N \times N}$ a linear matrix pencil (LMP) is a linear combination of these matrices with matrix valued coefficients, i.e., a matrix in $\CC^{n \times n} \otimes \CC^{N \times N}$ of the form 
\begin{equation}
  \label{LMP - Intro}
\Hb = \sum_{i=0}^d L_i \otimes \Xb_i\,.
\end{equation}
These LMPs find applications in optimization \cite{WolkSaigVandBook}, systems engineering \cite{SkelIwasGrigBook}, theoretical computer science \cite{BersReutBook} (see, e.g., \cite{KlepPascVolc17,GargGurvOlivWigd20} and references therein for numerous other applications).

In this work we consider the case when $\Xb_{0}= \Ib_N$ and $\Xb_{1}, \dots, \Xb_{d}$ are Wigner matrices.
These random LMPs are of particular interest in the study of the evolution of ecosystems \cite{HastJuhaSchr92} and neural networks \cite{RajaAbbo06} (see also \cite{AljaRenfSter15,ErdoKrugRenf} and references therein).
Moreover, such LMPs appear as linearizations of non-commutative polynomials $P(\Xb_1, \dots, \Xb_{d})$ in the Wigner matrices. We use the result of the current work and this fact in the companion paper \cite{KrugNemi_mesoCLTpoly} to establish the mesoscopic CLT for linear eigenvalue statistics of such polynomials.
The linearization technique also extends to non-commutative rational functions of random matrices. These are used,  e.g.,  in the study of transport properties  through disordered quantum systems, such as quantum dots \cite{ErdoKrugNemi_qDot} and, therefore, the corresponding spectral CLTs can be used to determine fluctuations of transport eigenvalues.

Linearization techniques have been used extensively in free probability \cite{HaagThor05,Ande13,HeltMaiSpei18,MaiSpeiYin2}.
Various properties of the spectrum of the LMPs of free elements have been established, e.g. in \cite{HaagThor99,Thor00,BannCebr23}.
The study of the CLTs in the context of free probability uses the theory of  second order freeness (see, e.g., \cite{CollMingSniaSpei07}).

The matrix coefficients $L_i$ of the LMP $\Hb$ encode the correlation structure of the entries and determine the spectral density via a nonlinear matrix equation, the Matrix Dyson Equation (MDE), whose solution $M(z)$ is interpreted as the expectation value $\EE[\Gb(z)]$ of the resolvent  $\Gb(z):=(\Hb -z)^{-1}$ in the large $N$ limit. Incorporating this non-trivial structure of the resolvent in the calculation of the fluctuations is one of the main novelties in this work and can be extended to other models with decaying correlations, such as the Kronecker random matrices in \cite{AltErdoKrugNemi_Kronecker} or matrices with general  decaying correlations in \cite{AjanErdoKrug19,ErdoKrugSchr19}. To keep the presentation simple, however, we do not pursue this direction. Instead we show that the mesoscopic CLT \eqref{mesoscopic CLT - Intro} with $N$ replaced by $n N$ and in the limit $N \to \infty$ holds for matrices $\Hb$ of the form \eqref{LMP - Intro}, where the symmetry indicator $\beta \in \{1,2\}$ depends on whether the  Wigner matrices $\Xb_1, \dots,  \Xb_{d}$ and their coefficient matrices $L_i$ are real symmetric or complex Hermitian.
Our proof relies solely on  resolvent methods and does not involve the application of  Dyson Brownian motion.
This allows to obtain the CLT on \emph{all} mesoscopic scales for models with large zero blocks of size $O(N)$.

\section{Model and results}
\label{sec:results}

In this paper we study random matrix models having general block correlation structures with blocks drawn from  random matrix ensembles with independent identically distributed (\emph{i.i.d.}) entries.

\textbf{The model.}
Fix $d, n \in \NN$.
Let $K_0, L_1, \ldots, L_{d} \in \CC^{n\times n}$ be deterministic matrices, and let $\Xb_{1},\ldots, \Xb_{d} \in \CC^{N \times N}$, $\Xb_{\alpha} = \big( x_{ij}^{(\alpha)} \big)_{i,j=1}^{N}$, $1\leq \alpha \leq d$, be independent $N\times N$ random matrices with \emph{i.i.d.} entries.
Consider the random matrix model $\Hb^{(\beta)}\in \CC^{nN\times nN} =\CC^{n \times n} \otimes \CC^{N \times N} $ of the form
\begin{equation}
  \label{eq:1}
  \Hb^{(\beta)}
  =
  K_0 \otimes \Ib_N + \sum_{\alpha = 1}^{d} \Big(L_{\alpha} \otimes 
  \Xb_{\alpha} + L_{\alpha}^{*} \otimes 
  \Xb_{\alpha}^{*} \Big)
  ,
\end{equation}
where $\beta\in \{1,2\}$ denotes the symmetry class with $\beta = 1$ corresponding to the real-symmetric matrices and $\beta=2$ to the (complex) Hermitian matrices. We distinguish these two classes in our assumptions as follows:
\begin{itemize}[itemsep=0pt]
\item for $\beta = 1$, the structure matrices $K_{0}, L_{1},\ldots, L_{d}\in \RR^{n\times n}$ are deterministic real $n\times n$ matrices, and $\Xb_1,\ldots, \Xb_d \in \RR^{N\times N}$ are independent $N\times N$ real random matrices with \emph{i.i.d.} entries satisfying
  \begin{equation}
    \label{eq:2}
    \EE\big[ x_{11}^{(\alpha)} \big]
    =
    0
    , \quad
    \EE\big[ \big(x_{11}^{(\alpha)}\big)^2 \big]
    =
    \frac{1}{N}
  \end{equation}
  for all $1\leq \alpha \leq d$;
\item for $\beta = 2$, the structure matrices $K_{0}, L_{1},\ldots, L_{d}\in \CC^{n\times n}$ are deterministic complex $n\times n$ matrices, and $\Xb_1,\ldots, \Xb_d \in \CC^{N \times N}$ are independent $N\times N$ complex random  matrices with \emph{i.i.d.} entries satisfying
  \begin{equation}
\label{eq:3}
    \EE\big[ x_{11}^{(\alpha)} \big]
    =
    0
    , \quad
    \EE\big[ \big|x_{11}^{(\alpha)}\big|^2 \big]
    =
    \frac{1}{N}
  \end{equation}
  for all $1\leq \alpha \leq d$.
\end{itemize}
The dimension $N\in \NN$ of the second tensor factor is the large parameter that tends to infinity, $\Ib_N$ is the $N\times N$ identity matrix, and $\otimes$ denotes the tensor (or Kronecker) product.

We additionally assume that the entries of $\Xb_{\alpha}$ have bounded moments: for each $p \in \NN$, $p\geq 3$, there exists $c_{p}>0$ such that
\begin{equation}
  \label{eq:4}
  \max_{1\leq \alpha \leq d}\EE\Big[ \big|\sqrt{N} x_{11}^{(\alpha)}\big|^p \Big]
  \leq
  c_{p}
  ,
\end{equation}
and in the complex Hermitian case ($\beta = 2$) we assume that $\Re x_{11}^{(\alpha)}$ and $\Im x_{11}^{(\alpha)}$ are independent and
\begin{equation}
  \label{eq:5}
  \EE\Big[ \big( \Re x_{11}^{(\alpha)}\big)^2 \Big]
  =
  \frac{1}{2N}
  ,
  \quad
  \EE\Big[ \big( \Im x_{11}^{(\alpha)}\big)^2 \Big]
  =
  \frac{1}{2N}
  .
\end{equation}
We call $d,n, K_{0}, L_{1},\ldots, L_{d}$ and $c_3,c_4,\ldots$ the \emph{model parameters}.

Notice, that if $L_{\alpha} = L_{\alpha}^{*}$ for some $\alpha\in \NN$, then this gives rise to the term of the form
\begin{equation*}
  L_{\alpha} \otimes 
  \Xb_{\alpha} + L_{\alpha}^{*} \otimes 
  \Xb_{\alpha}^{*}
  =
  \sqrt{2} L_{\alpha} \otimes 
  \bigg( \frac{\Xb_{\alpha} + \Xb_{\alpha}^{*}}{\sqrt{2}}\bigg),
\end{equation*}
where $(\Xb_{\alpha} + \Xb_{\alpha}^{*})/\sqrt{2}$ is a (real or complex) Wigner matrix.
Whenever the symmetry class is irrelevant, we will suppress the parameter $\beta$ in the notation. 

\textbf{Preliminary results about Kronecker random matrices.} 
The model $\Hb$ defined in \eqref{eq:1} is a special case of the  Kronecker random matrices,  introduced in \cite{AltErdoKrugNemi_Kronecker} to denote a  model of the type \eqref{eq:1} in which matrices $\{\Xb_{\alpha}\}$ are assumed to be independent with independent but not necessarily identically distributed entries.
Below we collect several properties of $\Hb$ that are direct consequences of the corresponding results for general Kronecker random matrices from \cite{AltErdoKrugNemi_Kronecker} and \cite{AltErdoKrug20}.

We start by introducing the \emph{matrix Dyson equation} (\emph{MDE}), which, among other, characterizes the large-$N$ limit of the empirical spectral measure of $\Hb$.
In the case of the Kronecker random matrix $\Hb$ defined in \eqref{eq:1}, the MDE takes the form
\begin{equation}
  \label{eq:7}
  -\frac{1}{M(z)}
  =
  z I_n - K_{0} + \Gamma[M(z)]
\end{equation}
with unknown $M(z)\in \CC^{n\times n}$, $z\in \CC_{+}:= \{z \in \CC \, : \, \Im z > 0\}$, $I_n\in \CC^{n\times n}$ the identity matrix, and the operator $\Gamma: \CC^{n\times n} \to \CC^{n\times n}$ given by
\begin{equation}
  \label{eq:8}
  \Gamma[R]
  :=
  \sum_{\alpha=1}^{d} \Big(L_{\alpha} R L_{\alpha}^{*} + L_{\alpha}^{*} R L_{\alpha} \Big)
\end{equation}
for any $R\in \CC^{n\times n}$.
We call $\Gamma$ the \emph{self-energy} operator.
Notice that the operator $\Gamma$ maps the set of positive semidefinite matrices into itself.
Therefore, by \cite[Theorem 2.1]{HeltRaFaSpei07}, for any $z\in \CC_{+}$ there exists a unique solution $M=M(z)$ to the matrix Dyson equation \eqref{eq:7} with positive definite imaginary part $ \Im M = \frac{1}{2\imu}(M-M^*)>~0$.
The function $M: \CC_{+} \to \CC^{n\times n}$ is a matrix-valued Herglotz function (see, e.g., \cite[Section~5]{GeszTsek00}), depends analytically on $z$, and admits the representation
\begin{equation}
  \label{eq:9}
  M(z)
  =
  \int_{\RR} \frac{V(dx)}{x-z}
  ,
\end{equation}
where $V(dx)$ is a (positive semidefinite) matrix-valued measure on $\RR$ with compact support.
The Stieltjes transform representation \eqref{eq:9} readily follows from \cite[Proposition 2.1]{AltErdoKrug20} by taking $\Ac = \CC^{n\times n}$ and the self-energy operator $\Gamma$. 
In the following we will often assume that $\Gamma$ satisfies the following $L$-\emph{flatness} property:
\begin{itemize}
\item[\textbf{(A)}] there exist $L\in \NN$, a matrix $Z = (z_{kl})_{k,l=1}^{n} \in \{0,1\}^{n\times n}$, and a constant $\cstA>0$ such that $z_{kk} = 1$ for $1 \leq k \leq n$, the matrix $Z^L$ has all entries strictly positive, and for any positive semidefinite matrix $R = (r_{kl})_{k,l=1}^{n}\in \CC^{n\times n}$
  \begin{equation}
    \label{eq:10}
    \Gamma\big[R \,\big]
    \geq
    \cstA \cdot \sum_{l=1}^{n}\Big(\sum_{k=1}^{n} z_{kl} r_{kk} \Big) E_{ll}
    .
  \end{equation} 
\end{itemize}
The special case of the $L$-flatness with $L=1$ (in the literature often simply called the \emph{flatness property}) requires that all entries of $Z$ are equal to $1$, in which case the relation \eqref{eq:10} takes the form
\begin{equation}
  \label{eq:11}
    \Gamma\big[R \,\big]
    \geq
    \cstA \Tr\big(R \big) I_n
    .
  \end{equation}
  The $L$-flatness property was first introduced  in the context of the matrix Dyson equation in an early arXiv version of  \cite{AjanErdoKrug19}.
We collect certain important properties of the MDE \eqref{eq:7} with self-energy satisfying \textbf{(A)} in Proposition~\ref{pr:1}.
If $\Gamma$ satisfies $\textbf{(A)}$, then it follows from part (i) of Proposition~\ref{pr:1} that $\|M(z)\|$ is uniformly bounded on $\CC_{+}$.
This implies (see, e.g., \cite[Lemma~5.4 (vi) and Lemma~5.5 (i)]{GeszTsek00}) that the measure $V(dx)$ in \eqref{eq:9} is absolutely continuous with respect to the Lebesgue measure on $\RR$ and its density is given by the inverse Stieltjes transform of $M(z)$, i.e., 
\begin{equation}
  \label{eq:12}
  V(dx)
  =
  V(x)dx
  ,\quad
  \mbox{where}
  \quad
  V(x)
  :=
  \lim_{y\downarrow 0} \frac{1}{\pi} \Im M(x+\imu y)
  .
\end{equation}
 From part (iv) of Proposition~\ref{pr:1} we know that $\lim_{y \downarrow 0} M(x + \imu y)$ exists for all $x\in \RR$.
Using this we extend the function $M$ beyond the set $\CC_{+}$ by setting
\begin{equation}
  \label{eq:13}
  M(z)
  :=
  \left\{
    \begin{array}{ll}
      \big(M(\overline{z})\big)^{*}, & z\in \CC_{-}:= \{z \in \CC \, : \, \Im z < 0\},
      \\[10pt]
      \lim_{y\downarrow 0} M(x + \imu y), & z = x \in \RR. 
    \end{array}
  \right.
\end{equation}
With the above definition, $M(z)$ is continuous on $\CC_{+}\cup \RR$, and $\lim_{y \downarrow 0}(M(x + \imu y)-M(x - \imu y)) = 2 \imu \Im M(x) = 2 \pi  \imu V(x)$.

We define the empirical spectral measure of $\Hb$ by
\begin{equation*}
  \mu_{\Hb}
  :=
  \frac{1}{nN} \sum_{i=1}^{nN} \delta_{\lambda_i}
  ,
\end{equation*}
where $\lambda_1, \ldots, \lambda_{nN}\in \RR$ are the eigenvalues of $\Hb$ counted with multiplicities, and $\delta_{\lambda}$ denotes the Dirac measure at $\lambda \in \RR$.
Assume that the self-energy operator satisfies the $L$-flatness property \textbf{(A)}.
By specializing \cite[Theorem~2.7]{AltErdoKrugNemi_Kronecker} to $\Hb$ and using \eqref{eq:12} we obtain that, as $N\to \infty$, the empirical spectral measure $\mu_{\Hb}$ converges weakly in probability to $\rho(x)dx$, where
\begin{equation}
  \label{eq:15}
  \rho(x)
  :=\frac{1}{n}
  \Tr\big(V(x)\big)
  =
  \lim_{y\to 0} \frac{1}{n\pi} \Tr \big(\Im M(x+\imu y)\big)
  .
\end{equation}
We call the weak convergence $\mu_{\Hb}\Rightarrow \rho(x)dx$ the \emph{global law} for $\Hb$, and we call the function $\rho(x)$ the \emph{(self-consistent) density of states} for $\Hb$.
Notice that $\rho$ depends only on the model parameters.

In this paper we determine the fluctuations of the linear spectral statistics of $\Hb^{(\beta)}$ for $\beta \in \{1,2\}$ on \emph{mesoscopic} scales inside the bulk, i.e., for those $x\in \RR$ for which $0< \rho(x) < \infty$.
The following central limit theorem for linear spectral statistics is our main result.
\begin{thm} \label{thm:main}
  Let $\Hb=\Hb^{(\beta)}$ be as in \eqref{eq:1}, and suppose that the corresponding self-energy operator $\Gamma$ satisfies the $L$-flatness assumption \textbf{(A)} for some $L\in \NN$.
  Let $g \in \Cc^{2}_c(\RR)$ be a twice continuously differentiable test function  with compact support.
  Then for any $\gamma \in (0,1)$ and $E_0$ satisfying $0< \rho(E_0) < \infty$, the mesoscopic linear spectral statistic 
  \begin{equation}
    \label{eq:16}
    \sum_{i=1}^{nN} \big( f_N(\lambda_i) - \EE[ f_N(\lambda_i)]  \big) 
  \end{equation}
  with
  \begin{equation}
    \label{eq:17}
    f_N: \RR \to \RR,
    \qquad
    f_N(x) = g\Big(N^{\gamma}(x - E_0)\Big)
  \end{equation}
  converges in distribution to a centered Gaussian random variable with variance
  \begin{equation}
    \label{eq:18}
    V[g]
    :=
    \frac{1}{2 \beta \pi^2} \int_{\RR} \int_{\RR} \frac{(g(x) - g(y))^{2}}{(x-y)^2} dx dy
    .
  \end{equation}
\end{thm}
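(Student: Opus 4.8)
The plan is to follow the standard resolvent-based strategy for mesoscopic CLTs, implemented at the level of the matrix-valued (two-point) Dyson equation advertised in the abstract. The starting point is the Helffer–Sjöstrand formula, which expresses the linear statistic in \eqref{eq:16} through the Stieltjes transform of the empirical spectral measure: writing $s_N(z) := \frac{1}{nN}\Tr \Gb(z)$ and $m(z) := \frac1n \Tr M(z)$, one has
\[
  \sum_{i=1}^{nN}\bigl(f_N(\lambda_i)-\EE f_N(\lambda_i)\bigr)
  = \frac{nN}{\pi}\int_{\CC} \bar\partial \tilde f_N(z)\,\bigl(s_N(z)-\EE s_N(z)\bigr)\,d^2z,
\]
with $\tilde f_N$ an almost-analytic extension of $f_N$. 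One then splits the $z$-integral into a regime where $\Im z$ is below a small power of $N$ (handled by a priori local-law bounds on $\Gb$, which are available for $\Hb$ by specializing the Kronecker-matrix results of \cite{AltErdoKrugNemi_Kronecker} and \cite{AltErdoKrug20}, together with rigidity) and a regime where $\Im z$ is of mesoscopic size, where the CLT is actually generated. The goal is to prove joint Gaussianity of the finitely many complex random variables $s_N(z_1)-\EE s_N(z_1),\dots$ at fixed spectral parameters $z_k$ with $\Im z_k \sim \eta \in (N^{-1+\epsilon}, N^{-\epsilon'})$, with a covariance that, after being integrated against $\bar\partial\tilde f_N$, reproduces \eqref{eq:18}.

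The core of the argument is a cumulant expansion. Starting from the identity $\Hb\Gb = I + z\Gb$ and applying the (real or complex) cumulant expansion formula to $\EE[(\Hb\Gb)_{aa}]$, one derives a self-consistent equation for $\EE[\Gb]$ whose leading part is the MDE \eqref{eq:7}, the subleading part producing the expectation $\EE s_N$, and whose fluctuation part, $\mathring{\Gb} := \Gb - \EE\Gb$, satisfies a linear equation of the schematic form $(\mathcal{I} - \mathcal{C}_z)[\mathring{\Gb}] = (\text{explicit fluctuation source}) + (\text{higher order})$, where $\mathcal{C}_z$ is the stability operator of the MDE linearized at $M(z)$, acting on $\CC^{n\times n}$-valued quantities. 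Inverting the stability operator (its invertibility in the bulk is part of the content of Proposition~\ref{pr:1}, via the $L$-flatness assumption \textbf{(A)}) expresses $s_N(z)-\EE s_N(z)$ to leading order as a linear functional of the independent entries $x^{(\alpha)}_{ij}$, i.e.\ as an approximately Gaussian object, plus errors that are smaller by powers of $(N\eta)^{-1}$. To upgrade approximate-Gaussianity to a genuine CLT, I would compute the higher cumulants (equivalently, use the characteristic function / Stein-type argument of \cite{LytoPast09} adapted to the correlated setting, or a direct moment-matching scheme as in \cite{HeKnow17}) and show that the $k$-th cumulant of the linear statistic vanishes in the limit for $k\ge 3$.

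The novelty, and the step I expect to be the main obstacle, is the computation of the limiting covariance
\[
  \lim_{N\to\infty}\Cov\bigl(nN\,s_N(z_1),\,nN\,s_N(z_2)\bigr),
\]
which must come out \emph{independent of the structure matrices} $K_0,L_1,\dots,L_d$ and equal to the universal value $\frac{1}{\beta}\partial_{z_1}\partial_{z_2}\log\frac{m(z_1)-m(z_2)}{z_1-z_2}$ (up to the usual normalization), so that the double integral against $\bar\partial\tilde f_N$ collapses to \eqref{eq:18}. The covariance is naturally expressed through the resolvent of a \emph{two-point} operator: one needs the large-$N$ behavior of $\EE[\Gb(z_1)B\Gb(z_2)]$ for deterministic $B$, which solves its own linear matrix equation — the "two-point Dyson equation" — whose stability operator is $\mathcal{I}$ minus the product of one-point stability data at $z_1$ and $z_2$. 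The hard part is to show that, although $\EE[\Gb(z_1)B\Gb(z_2)]$ itself depends heavily on the $L_\alpha$'s, the specific trace combination entering the covariance telescopes: using the MDE and the Ward-type identity $\Gb(z_1)-\Gb(z_2) = (z_1-z_2)\Gb(z_1)\Gb(z_2)$ one should be able to reduce everything to derivatives of $m(z)$ alone, exploiting that $\frac1n\Tr$ composed with the self-energy operator $\Gamma$ interacts with $M$ only through scalar quantities by the structure of \eqref{eq:8}. A careful bookkeeping of the real versus complex case (the $\beta=1$ case produces an extra "transpose" channel in the cumulant expansion, doubling the variance relative to $\beta=2$, consistent with the $1/\beta$ in \eqref{eq:18}) completes the identification. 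Once the covariance and the vanishing of higher cumulants are established, the passage from the CLT for $\{s_N(z_k)\}$ to the CLT for the mesoscopic linear statistic is routine: one checks that the error terms in Helffer–Sjöstrand are uniformly controlled in $N$ on the relevant contour, takes $N\to\infty$, and evaluates the resulting Gaussian integral to obtain $V[g]$.
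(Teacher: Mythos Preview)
Your overall strategy is sound and tracks the paper's, but two points of execution differ and one claim needs correction.

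\emph{Organization.} The paper does not first prove joint Gaussianity of finitely many $s_N(z_k)$ and then integrate. It works directly with the characteristic function $\efr(t)=\exp\bigl(\tfrac{it}{\pi}\int_\Omega \bar\partial\tilde f\,(1-\EE)\Tr\Gb\,d^2z\bigr)$ of the full linear statistic, differentiates in $t$, and obtains an approximate ODE $\tfrac{d}{dt}\EE[\efr(t)]=-t\,V_N[g]\,\EE[\efr(t)]+o(1)$ after a single cumulant expansion (Lemma~\ref{lem:2}). The multiresolvent average $G^B(z,\zeta)=\tfrac1N\sum_{k,l}G_{lk}(z)BG_{kl}(\zeta)$ that emerges is then shown to satisfy its own self-consistent equation $\Bc_{z,\zeta}[G^B]=B+\text{error}$ by a Schur-complement computation (Lemma~\ref{lem:3}), not a second cumulant expansion. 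Your route via joint Gaussianity of resolvent traces would also work, but the paper's single-pass characteristic-function argument avoids having to control all higher cumulants separately.

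\emph{The universality mechanism.} Your proposed reason for why the covariance is model-independent --- that $\tfrac1n\Tr\circ\Gamma$ ``interacts with $M$ only through scalar quantities'' --- is not correct here: $\Gamma[M]$ is genuinely matrix-valued and no trace reduction collapses the $L_\alpha$-dependence. The paper's actual mechanism is a spectral analysis of the two-point stability operator $\Bc_{z,\zeta}=\Cc^{-1}_{M(z),M(\zeta)}-\Gamma$ (Lemma~\ref{lem:4}): when $z,\zeta$ lie in opposite half-planes, $\Bc_{E_0,E_0}$ has a \emph{simple} zero eigenvalue with left and right eigenvector $\Im M(E_0)$, and analytic perturbation theory yields $\lambda_{z,\zeta}=\tfrac{i\langle\Im M\rangle}{2\|\Im M\|_{\rm HS}^2}(z-\zeta)+O(|z-E_0|^2+|\zeta-E_0|^2)$. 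The universal kernel $-(z-\zeta)^{-2}$ then appears because an explicit MDE computation (\eqref{eq:198}--\eqref{eq:200}) shows the residue constant $\phi^{(+,-)}=-1$, independent of all structure matrices. For $\beta=1$ an additional channel $\Gt^B(z,\zeta)=\tfrac1N\sum_{k,l}G_{lk}(z)BG_{lk}(\zeta)$ arises exactly as you anticipate; the same perturbation analysis gives $\tilde\phi^{(+,-)}=-1$, doubling the variance. One further subtlety your sketch does not anticipate: the naive error bound from inverting $\Bc_{z,\zeta}$ only reaches scales $\eta\gg N^{-1/3}$. To cover all mesoscopic scales the paper (Lemma~\ref{lem:6}) splits $B$ along the eigendirection $\Im M(E_0)$ and treats the component along it directly via the resolvent identity $\Gb(z)-\Gb(\zeta)=(z-\zeta)\Gb(z)\Gb(\zeta)$, gaining the missing power of $\eta$.
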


\begin{rem}[Comparison of $1$-flatness and $L$-flatness for $L>1$]
  The condition \eqref{eq:11} that defines the $1$-flatness does not allow any of the $N\times N$ blocks of matrix $\Hb$ to be constantly equal to $0$.
  Indeed, if there exist $1\leq i,j \leq n$ such that $\la e_{i}, L_{\alpha} e_{j} \ra = \la e_{i}, L_{\alpha}^{*} e_{j} \ra = 0$ for all $1\leq \alpha \leq d$, then
  \begin{equation*}
    \big\la e_{i}, \Gamma[E_{jj} ] e_{i} \big\ra
    =
    \sum_{\alpha = 1}^{d}\Big(\big\la e_{i}, L_{\alpha} e_{j} \big\ra \big\la e_j, L_{\alpha}^{*}e_{i} \big\ra + \big\la e_{i}, L_{\alpha}^{*} e_{j} \big\ra \big\la e_j, L_{\alpha}e_{i} \big\ra \Big) 
    =
0
    ,
  \end{equation*}
  while $\Tr(E_{jj} ) = 1$.
  This contradicts to \eqref{eq:11}.
  On the other hand, the $L$-flatness with $L>1$ gives the structure of $\Hb$ more flexibility, in particular in terms of the zero blocks.
  We illustrate this with the following example.
  Let $\Xb_i$, $1 \leq i \leq 7$, be independent (real or complex) random i.i.d. matrices.
Denote $\Yb_{i} := \frac{1}{\sqrt{2}}(\Xb_{i} + \Xb_{i}^{*})$ for $1 \leq i \leq 4$, and consider the random Kronecker matrix
\begin{equation*}
  \Hb
  =
  \left(
    \begin{array}{cccc}
      \Yb_1 & \Yb_1+\Xb_5 & 0 & 0
      \\
      \Yb_1+ \Xb_5^* & \Yb_2 & 0 & \Xb_6
      \\
      0 & 0 & \Yb_1 + \Yb_3 & \Xb_6 + \Xb_7
      \\
      0 & \Xb_6^* & \Xb_6^* + \Xb_7^* & \Yb_4
    \end{array}
  \right)
\end{equation*}
constructed using the structure matrices
\begin{align}
  \label{eq:20}
  L_{1}
  =
  \frac{1}{\sqrt{2}}\Big( E_{11} + E_{12} + E_{21} + E_{33} \Big)
  , \quad
  L_{2}
  =
  \frac{1}{\sqrt{2}} E_{22}
  ,\quad
  L_{3}
  =
  \frac{1}{\sqrt{2}} E_{33}
  ,
\end{align}
\begin{equation}
  \label{eq:21}
  L_{4} = \frac{1}{\sqrt{2}} E_{44}
  , \quad
  L_{5}
  =
  E_{12}
  ,\quad
  L_{6}
  =
  E_{24} + E_{34}
  ,\quad
  L_{7}
  =
  E_{34}
  .
\end{equation}
The matrix $Z \in \{0, 1\}^{4 \times 4}$ given by
\begin{equation}
  \label{eq:22}
  Z
  =
  \left(
    \begin{array}{cccc}
      1 & 1 & 0 & 0
      \\
      1 & 1 & 0 & 1
      \\
      0 & 0 & 1 & 1
      \\
      0 & 1 & 1 & 1
    \end{array}
  \right)
\end{equation}
had a strictly positive main diagonal, and $Z^3$ has all entries strictly positive.
Moreover, the operator $\Gamma$ defined through \eqref{eq:8} with $d=7$ and $L_\alpha$ in \eqref{eq:20}-\eqref{eq:21} satisfies the $3$-flatness property with $Z$ given in \eqref{eq:22} and $\cstA = 0.1$.
The possibility of having zero blocks plays important role in applying the random LMPs of the form \eqref{eq:1} to the study of the linearizations of polynomials and rational functions in random matrices.
\end{rem}

\vspace{.5cm}

\textbf{Structure of the proof}.
In Section~\ref{sec:proof} we provide the full proof of Theorem~\ref{thm:main} in the complex Hermitian case $\beta = 2$.
In Sections~\ref{sec:proof-1} and \ref{sec:proof-2} we derive a differential equation for the characteristic function of the mesoscopic linear spectral statistic \eqref{eq:16}.
The obtained equation contains a multiresolvent term.
In Section~\ref{sec:proof-3} we show that the multiresolvent term satisfies a certain self-consistent equation, which is analyzed in Section~\ref{sec:proof-4}.
This allows to approximate the multiresolvent term by a deterministic quantity and to compute the limiting behavior of the characteristic function of the statistic \eqref{eq:16} in Section~\ref{sec:proof-5}.

In Section~\ref{sec:proof-real} we prove Theorem~\ref{thm:main} in the real symmetric case $\beta=1$.
Section~\ref{sec:proof-r1} collects all the results that can be imported from Section~\ref{sec:proof} with minor changes or without changes.
The differential equation for the characteristic function of the linear statistic \eqref{eq:16} in the $\beta = 1$ case contains a multiresolvent term of a new type, which we then study in Section~\ref{sec:proof-r2}.
The limiting behavior of the characteristic function is calculated in Section~\ref{sec:proof-r3}, thus completing the proof of Theorem~\ref{thm:main}.

To streamline the presentation, the derivation of certain results, which are important, but whose proofs are not immediately relevant for establishing the main theorem, are postponed to the appendix.
In Appendix~\ref{sec:lflat} we list and prove the properties of the solution the the MDE \eqref{eq:7} with self-energy satisfying the general $L$-flatness property \textbf{(A)}.
Finally, in Appendix~\ref{sec:iid} we derive a convenient cumulant expansion formula for the resolvent matrix $(\Hb - z\Ib_{nN})^{-1}$, which is used extensively in Sections~\ref{sec:proof-2} and \ref{sec:proof-r2}.

\section{Complex Hermitian case}
\label{sec:proof}

The proof of Theorem~\ref{thm:main} presented in this section relies on the study of the characteristic function of the mesoscopic linear spectral statistic $\sum_{i} \big(f_N(\lambda_i) - \EE[f_N(\lambda_i)] \big) = \Tr f_N(\Hb) - \EE\big[\Tr f_N(\Hb)\big]$.
More precisely, we show that the characteristic function  $\EE\big[\exp\big\{\imu t \big(\Tr f_N(\Hb) - \EE\big[\Tr f_N(\Hb)\big]\big)\big\}\big]$ converges to $\exp\{-t^2V[g]/2\}$, the characteristic function of a Gaussian random variable with variance \eqref{eq:18}.
This is achieved through the detailed analysis of the resolvent of $\Hb$, which can be related to the linear spectral statistics \eqref{eq:16} via the Helffer-Sj\"{o}strand formula.
This relation will be explained below in \eqref{eq:28}.
We start by introducing the necessary notation.

\subsection{Notation and preliminary reductions}
\label{sec:proof-1}

Denote by $\eta_0(N) := N^{-\gamma}$ the (mesoscopic) scaling parameter.
We assume $\gamma \in (0,1)$, so that $N^{-1} \ll \eta_0(N) \ll 1$ and $f_N(x) = g\big((x-E_0)/\eta_0\big)$ (defined in \eqref{eq:17}) satisfies
\begin{equation}
  \label{eq:23}
  \|f_N\|_{1}
  =
  \|g\|_{1} \eta_0
  ,\quad
  \| f_N' \|_{1}
  =
  \|g'\|_{1}
  , \quad
  \| f_N''\|_{1}
  =
  \frac{\|g''\|_{1}}{\eta_0}
  .
\end{equation}
We will suppress the $N$-dependence in $\eta_0$ and $f$ for brevity.

Since the mesoscopic test function $f$ is localized around $E_0$, we can restrict our analysis to a small region around the support of $f$.
To this end, let $\sigma>0$ be a sufficiently large constant satisfying $\mathrm{supp}(g) \subset [-\sigma,\sigma]$, which, in turn, implies
\begin{equation*}
  \mathrm{supp}(f) \subset \big[E_{0}-\delta,E_{0}+\delta\big]
\end{equation*}
with $\delta = \delta_N:=\sigma \eta_{0}$.
From the continuity of $\rho$ and the fact that $0<\rho(E_0)<\infty$, there exists $\theta \in (0,1)$ such that
\begin{equation}
  \label{eq:25}
  \theta \leq \rho(x) \leq \theta^{-1}
  \quad
  \mbox{for all}
  \quad
  x \in (E_0-2 \delta,E_0+2\delta)
\end{equation}
for $N$ sufficiently large.
In the sequel, we will always assume that $N$ is large enough for \eqref{eq:25} to hold.

Denote by $\chi : \RR \to [0,1]$ a smooth cutoff function supported on $[-2\delta,2 \delta]$ and equal to $1$ on $[-\delta, \delta]$, and define the almost analytic extension of $f$ by
\begin{equation}
  \label{eq:26}
  \ft(z)
  :=
  (f(x) + \imu yf'(x))\chi(y)
\end{equation}
with $z=x+iy$.
An integral representation formula, used in the Helffer-Sj\"{o}strand functional calculus, expresses the value $f(\lambda)$ for any $\lambda \in \RR$ as
\begin{equation*}
  f(\lambda)
  =
  \frac{1}{\pi}\int_{\CC} \frac{\frac{\partial}{\partial\overline{z}} \ft (z)}{\lambda - z} \dz
  =
  \frac{1}{2 \pi}\int_{\CC} \frac{\imu y f''(x) \chi(y) +\imu(f(x) + \imu y f'(x))\chi'(y) }{\lambda - x - \imu y} dx dy
\end{equation*}
with the standard Lebesgue measure on $\CC$ denoted by  $\dz := d \Re z \, d \Im z$.
Therefore, we rewrite the fluctuations of the linear spectral statistics (\ref{eq:16}) as
\begin{equation}
  \label{eq:28}
  (1-\EE) \big[\Tr f(\Hb)\big]
  =
  \frac{1}{\pi} \int_{\CC}\frac{\partial \ft (z)}{\partial \overline{z}}  (1-\EE)\big[ \Tr \Gb(z)\big] \dz
  ,
\end{equation}
where
$\Gb(z) := (\Hb - z  \Ib_{nN} )^{-1}\in \CC^{nN\times nN}$ is the resolvent of $\Hb$, $\Ib_{nN}$ is the $nN\times nN$ identity matrix  and we used the shorthand notation  $(1-\EE) [X] := X - \EE[X]$ for any random variable $X$.
The above representation, with resolvent $\Gb(z)$ appearing on the right-hand side of \eqref{eq:28}, allows to exploit the properties of the resolvents of Kronecker random matrices, in particular, the local law.

To state the local law for Kronecker random matrices, we use the following notation.
For any matrix $\Rb \in \CC^{\,nN \times nN}$ we denote its (left) matrix coefficients $\{R_{ij}\}_{1\leq i,j \leq N}$ with respect to the standard basis of $\CC^{N\times N}$ via the identity
\begin{equation*}
  \Rb = \sum_{i,j=1}^{N} R_{ij} \otimes \Eb_{ij}
  ,\quad
  R_{ij} \in \CC^{n\times n}
  ,
\end{equation*}
where $\Eb_{ij} = \big(\delta_{k i}\delta_{j l }\big)_{k,l = 1}^{N} \in \CC^{N\times N}$, $\delta_{kl}$ is the Kronecker delta, and $\{\Eb_{ij}\}_{1 \leq i, j \leq N}$ is the standard basis of $\CC^{N\times N}$.
For example, the collection $\{G_{ij}(z)\}_{1\leq i,j \leq N}$ gives the matrix coefficients of the resolvent  $\Gb(z) \in \CC^{\,nN \times nN}$ through the identity
\begin{equation}
  \label{eq:30}
  \Gb(z) = \sum_{i,j=1}^{N} G_{ij}(z) \otimes \Eb_{ij}
  ,\quad
  G_{ij}(z) \in \CC^{n\times n}
  .
\end{equation}
We  also need the notion of   \emph{stochastic domination}.
For two sequences of nonnegative random variables $\bm{\Phi}:=(\Phi_{N})_{N\in \NN}$ and $\bm{\Psi}:=(\Psi_{N})_{N\in \NN}$ we say that $\bm{\Phi}$ is \emph{stochastically dominated} by $\bm{\Psi}$, denoted $\bm{\Phi} \prec \bm{\Psi}$, if for any  $\varepsilon > 0$ small and $D\in \NN$ there exists $C_{\varepsilon, D}>0$ such that
\begin{equation}
  \label{eq:31}
  \PP\Big[ \Phi_N \geq N^{\varepsilon} \Psi_N \Big]
  \leq
  \frac{C_{\varepsilon, D}}{N^{D}}
\end{equation}
holds for all $N\in \NN$. 
If $\bm{\Phi}$ and $\bm{\Psi}$ are deterministic, $\bm{\Phi}\prec \bm{\Psi}$ means that for any $\varepsilon>0$ there exists $C_{\varepsilon}>0$ such that $\bm{\Phi} \leq C_{\varepsilon}N^{\varepsilon} \bm{\Psi}$.
Finally, if there exists $C>0$ such that $\bm{\Phi} \leq C \bm{\Psi}$ uniformly for all $N$, then we denote this as  $\bm{\Phi} \lesssim \bm{\Psi}$.
We  write $\bm{\Phi} \sim \bm{\Psi}$ if $\bm{\Phi} \lesssim \bm{\Psi}$ and $\bm{\Psi} \lesssim \bm{\Phi}$.

We now state a result that will be used repeatedly throughout this paper to analyze the right-hand side of \eqref{eq:28}.
\begin{pr}[Optimal bulk local law for $\Hb$]
  Suppose that the self-energy operator $\Gamma$ satisfies the $L$-flatness property \textbf{(A)}.
  Then for any $\nu\in (0,1)$ and $\theta\in (0,1)$ 
\begin{equation}
  \label{eq:32}
  \max_{1\leq i,j \leq N}\|G_{ij}(z) - \delta_{ij}M(z)\|
  \prec
  \frac{1}{\sqrt{N \Im z}}
\end{equation}
and
\begin{equation}
  \label{eq:33}
  \Big\|\frac{1}{N}\sum_{i=1}^{N}G_{ii}(z) - M(z)\Big\|
  \prec
  \frac{1}{N \Im z}
\end{equation}
uniformly on the set $\{z \, : \, \theta < \rho\,(\Re z) < \theta^{-1}, \,\Im z \geq N^{-1+\nu}\}$, where $M(z)$ is the solution to the MDE \eqref{eq:7}, $\rho$ is the corresponding self-consistent density of states defined in \eqref{eq:15}, and $\| \cdot \|$ denotes the matrix norm of $G_{ij}(z) \in \CC^{n\times n}$ induced by the Euclidean norm on $\CC^n$.
\end{pr}
\begin{proof}
  The bounds \eqref{eq:32} and \eqref{eq:33} are an immediate consequence of \cite[Lemma~B.1]{AltErdoKrugNemi_Kronecker} and Proposition~\ref{pr:1}.
Indeed, it has been proven in \cite[Lemma~B.1]{AltErdoKrugNemi_Kronecker} that \eqref{eq:32} and \eqref{eq:33} hold for all $z\in \CC_{+}$ such that $\Im z \geq N^{-1 + \nu}$ and
\begin{itemize}[itemsep=0pt]
\item[(i)] $\|M(z)\|$ is bounded, and
\item[(ii)] the \emph{stability operator} defined for any $R\in \CC^{n\times n}$ by
  \begin{equation}
    \label{eq:34}
    R \mapsto  R-M(z) \Gamma[R] M(z)
  \end{equation}
  is invertible.
\end{itemize}
Parts (i) and (iii) of Proposition~\ref{pr:1} establish the boundedness of $\|M(z)\|$ (as well as $\|(M(z))^{-1}\|$) uniformly on $\CC_{+}$, and the boundedness of the inverse of the stability operator \eqref{eq:34} uniformly on the set $\{z \, : \, \theta < \rho\,(\Re z) < \theta^{-1}, \, \Im z \geq  0 \}$, which together imply~\eqref{eq:32} and \eqref{eq:33}. 
\end{proof}
Notice that the constants $C_{\varepsilon, D}$ in \eqref{eq:32} and \eqref{eq:33} that are hidden in the notion of stochastic domination depend on the model parameters and  additionally on $\nu$, $\theta$.
We call \eqref{eq:32} the \emph{entry-wise local law} and \eqref{eq:33} the \emph{averaged local law} for $\Hb$.
Local law bounds \eqref{eq:32} and \eqref{eq:33} provide us with the necessary  control of the resolvent $\Gb(z)$ for the spectral parameters $z$ very close to the real line, namely for  $|\Im z| \geq N^{-1+\nu}$ for arbitrarily small $\nu \in (0,1)$. 

In order to establish the weak convergence of the random variable \eqref{eq:16} to a centered Gaussian random variable with variance \eqref{eq:18}, we consider the characteristic function of $(1-\EE)[\Tr f(\Hb) ]$.
Define 
\begin{equation}
  \label{eq:35}
  e(t)
  :=
  e^{\imu t (1-\EE)[\Tr f(\Hb)]}
  =
  e^{\frac{\imu t}{\pi} \int_{\CC} \frac{\partial \ft (z)}{\partial \overline{z}} (1-\EE)[\Tr \Gb(z) ] \dz}
  ,
\end{equation}
where on the right-hand side we used the representation \eqref{eq:28}.
As the first step of the analysis of \eqref{eq:35} we show that it is enough to consider the integral over a small neighborhood around $E_0$.
Moreover, removing a sufficiently narrow strip around the real line in the integral in \eqref{eq:35} does not change the limiting value of $e(t)$ and its expectation.
More precisely, denote
\begin{equation*}
  \Omega
  =
  \Omega_N
  :=
  \{z \in \CC \, : \, |\Re z - E_0|<\delta, N^{-\tau}\eta_0<|\Im z |< 2\delta \}   ,
\end{equation*}
where $\tau \in \big(0,1 \big)$ is a sufficiently small constant.
For each statement in this and subsequent sections we will indicate how small $\tau$ should be compared to $\gamma$.
The following holds.
\begin{lem}
  Let $\gamma \in \big(0,1\big)$,  $\tau \in \big(0, (1-\gamma)\big)$ and 
  \begin{equation}
    \label{eq:37}
    \efr(t)
    : =
    e^{\frac{\imu t}{\pi}\int_{\Omega} \frac{\partial \ft (z)}{\partial \overline{z}}  (1-\EE)[\Tr \Gb(z) ] \dz}
    .
  \end{equation}
  Then
  \label{lem:1}
  \begin{equation*}
    \big|\EE[e(t)] - \EE[\efr(t)]\big|
    \prec
    |t| \|g''\|_{1} N^{-\tau} 
    .
  \end{equation*}
\end{lem}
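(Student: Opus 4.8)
The plan is to show that the integral over $\CC$ defining $e(t)$ in \eqref{eq:35} and the integral over $\Omega$ defining $\efr(t)$ differ only by the contribution of a thin strip around the real axis, on which the Helffer--Sj\"ostrand kernel $\tfrac{\partial}{\partial\overline z}\ft$ carries a compensating factor of $\Im z$; this contribution is then estimated by the averaged local law above the scale $N^{-1+\nu}$ and by a deterministic monotonicity argument below it.

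First, since $g\in\Cc^{2}_c(\RR)$ with $\mathrm{supp}(g)\subset[-\sigma,\sigma]$, the support of $\tfrac{\partial}{\partial\overline z}\ft$ lies in $\{x+\imu y:|x-E_0|\le\delta,\ |y|\le2\delta\}$, and on the parts where $\chi'\ne0$ (i.e.\ $\delta\le|y|\le2\delta$) as well as on $N^{-\tau}\eta_0<|y|<\delta$ it lies inside $\Omega$. Hence, up to a Lebesgue-null set, the discarded region is
\[
  \mathcal R:=\big\{x+\imu y:\ |x-E_0|\le\delta,\ 0<|y|\le N^{-\tau}\eta_0\big\},
\]
and there $\chi(y)=1$, $\chi'(y)=0$, so $\tfrac{\partial}{\partial\overline z}\ft(x+\imu y)=\tfrac{\imu}{2}y f''(x)$. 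By \eqref{eq:35}, \eqref{eq:28}, \eqref{eq:37}, we have $e(t)=e^{\imu A}$ and $\efr(t)=e^{\imu B}$ with $A=t(1-\EE)[\Tr f(\Hb)]\in\RR$ and $A-B=\tfrac t\pi\int_{\mathcal R}\tfrac{\partial}{\partial\overline z}\ft\,(1-\EE)[\Tr\Gb(z)]\dz$. Pairing $x+\imu y$ with $x-\imu y$ in this integral and using $\Tr\Gb(x-\imu y)=\overline{\Tr\Gb(x+\imu y)}$,
\[
  A-B=-\frac t\pi\int_{|x-E_0|\le\delta} f''(x)\int_0^{N^{-\tau}\eta_0} y\,(1-\EE)\big[\Im\Tr\Gb(x+\imu y)\big]\,dy\,dx\ \in\ \RR,
\]
so $B\in\RR$ as well, and therefore $|e(t)-\efr(t)|=|e^{\imu(A-B)}-1|\le|A-B|$. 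Taking expectations and using Fubini,
\[
  \big|\EE[e(t)]-\EE[\efr(t)]\big|\le\frac{|t|}{\pi}\int_{|x-E_0|\le\delta}|f''(x)|\int_0^{N^{-\tau}\eta_0} y\,\EE\big|(1-\EE)[\Im\Tr\Gb(x+\imu y)]\big|\,dy\,dx .
\]

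It remains to estimate the inner $y$-integral. I would fix a small $\nu\in(0,1)$ with $N^{-1+\nu}<N^{-\tau}\eta_0$ — possible since $\tau<1-\gamma$ — and split at $\eta_1:=N^{-1+\nu}$. On $[\eta_1,N^{-\tau}\eta_0]$ the point $x+\imu y$ is in the bulk by \eqref{eq:25}, so the averaged local law \eqref{eq:33} yields $|(1-\EE)[\Im\Tr\Gb(x+\imu y)]|\le|(1-\EE)[\Tr\Gb(x+\imu y)]|\prec1/y$; combined with the deterministic estimate $|(1-\EE)[\Im\Tr\Gb(x+\imu y)]|\le 2nN/y$ on the exceptional set, this gives $\EE|(1-\EE)[\Im\Tr\Gb(x+\imu y)]|\lesssim N^{\varepsilon}/y$, hence $\int_{\eta_1}^{N^{-\tau}\eta_0}y\,\EE|(1-\EE)[\Im\Tr\Gb]|\,dy\lesssim N^{\varepsilon}N^{-\tau}\eta_0$. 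On $(0,\eta_1)$ the local law is unavailable, so I would use that for fixed $x$ the map $y\mapsto y\,\Im\Tr\Gb(x+\imu y)=\sum_{k=1}^{nN}\tfrac{y^2}{(\lambda_k-x)^2+y^2}$ is non-decreasing and $\Im\Tr\Gb\ge0$, giving $\int_0^{\eta_1}y\,\Im\Tr\Gb(x+\imu y)\,dy\le\eta_1^2\,\Im\Tr\Gb(x+\imu\eta_1)$ and likewise for $\EE\,\Im\Tr\Gb$; since $\eta_1=N^{-1+\nu}$, \eqref{eq:33} together with the boundedness of $\|M\|$ (part~(i) of Proposition~\ref{pr:1}) gives $\EE[\Im\Tr\Gb(x+\imu\eta_1)]\lesssim N$, so $\EE\int_0^{\eta_1}y\,|(1-\EE)[\Im\Tr\Gb(x+\imu y)]|\,dy\lesssim\eta_1^2 N=N^{-1+2\nu}$. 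All these bounds are uniform in $x\in(E_0-\delta,E_0+\delta)$, so inserting them and using $\|f''\|_1=\|g''\|_1/\eta_0$ with $\eta_0=N^{-\gamma}$,
\[
  \big|\EE[e(t)]-\EE[\efr(t)]\big|\lesssim|t|\,\|f''\|_1\big(N^{\varepsilon}N^{-\tau}\eta_0+N^{-1+2\nu}\big)=|t|\,\|g''\|_1\big(N^{\varepsilon-\tau}+N^{\gamma-1+2\nu}\big).
\]
Choosing $\nu\le(1-\gamma-\tau)/2$ — which is positive by $\tau\in(0,1-\gamma)$ and compatible with $\eta_1<N^{-\tau}\eta_0$ — makes $N^{\gamma-1+2\nu}\le N^{-\tau}$, and since $\varepsilon>0$ is arbitrary this is the asserted bound $\prec|t|\,\|g''\|_1 N^{-\tau}$.

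The step I expect to be the main obstacle is the control of the range $0<y<N^{-1+\nu}$, where \eqref{eq:33} gives no information on $\Tr\Gb(x+\imu y)$; it is resolved by the interplay of the monotonicity of $y\mapsto y\,\Im\Tr\Gb(x+\imu y)$ (which transfers all such $y$ to the single accessible scale $\eta_1$) with the extra factor $y$ in the kernel on $\mathcal R$ (which exactly cancels the $1/y$-size of the resolvent as $y\downarrow0$), and one must verify that the resulting exponent $N^{\gamma-1+2\nu}$ can be pushed below $N^{-\tau}$ — this is where $\tau<1-\gamma$ enters. A second point worth isolating is the reflection symmetry $z\mapsto\overline z$ on $\mathcal R$: it forces $A-B$ to be real, which yields the lossless inequality $|e(t)-\efr(t)|\le|A-B|$ and hence a bound linear in $|t|$; a direct estimate of the complex exponent would instead incur an uncontrolled factor $|\efr(t)|$ that need not be bounded on exceptional events.
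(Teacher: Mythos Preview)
Your proof is correct and follows exactly the standard argument that the paper invokes: the authors omit the proof entirely, writing that it ``follows the lines of the similar result from \cite[Section~4.2]{LandSoso20} with only minor changes.'' Your treatment --- reducing to the strip $\mathcal R$, using the symmetry $z\mapsto\overline z$ to obtain a real exponent, splitting at $\eta_1=N^{-1+\nu}$, applying the averaged local law above $\eta_1$, and using the monotonicity of $y\mapsto y\,\Im\Tr\Gb(x+\imu y)$ below it --- is precisely that argument.
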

\noindent The proof of this lemma follows the lines of the similar result from \cite[Section~4.2]{LandSoso20} with only minor changes, therefore, we omit it in the present work.
 From Lemma~\ref{lem:1} we see that $\EE[\efr(t)]$ and the characteristic function of the mesoscopic linear spectral statistic $\EE[e(t)]$ coincide in the limit $N\to \infty$.
In the remainder of this section we will study $\EE[\efr(t)]$.
Notice that the local laws \eqref{eq:32}-\eqref{eq:33} hold for $|\Im z| \gg N^{-1}$, therefore, working with $\efr(t)$ makes it possible to apply the local laws for $\Gb(z)$ for all $z$ in the domain of integration $\Omega$ in \eqref{eq:37}.

\subsection{Computing $\EE[\efr(t)]$  for $\beta = 2$}
\label{sec:proof-2}
In this section we obtain an approximate equation for $\EE[\efr(t)]$ in the complex Hermitian case.
The derivation is based on direct algebraic computations, and the obtained equation will be further analyzed and refined in the subsequent sections.

The main tool that will be used throughout this section is the cumulant expansion formula.
Denote by  $\Wb = \Hb - \EE[\Hb]$  the fluctuation matrix of $\Hb$ from \eqref{eq:1} with complex i.i.d. matrices $\Xb_{\alpha}$ in the second tensor factor, i.e. 
\begin{equation}
  \label{eq:39}
  \Wb
  :=
  \sum_{\alpha = 1}^{d} \bigg( L_{\alpha} \otimes 
  \Xb_{\alpha} + L_{\alpha}^{*} \otimes 
  \Xb_{\alpha}^{*} \bigg)
  .
\end{equation}
For any differentiable function $\Fcc: \CC^{nN\times nN} \to \CC^{nN \times nN}$ we define the directional derivative of $\Fcc(\Wb)$ with respect to $\Wb= (w_{ij})_{i,j=1}^{nN}$ in the direction $\Rb = (r_{ij})_{i,j=1}^{nN} \in \CC^{nN \times nN}$ as
\begin{equation}
  \label{eq:40}
  \nabla_{\Rb}\Fcc(\Wb)
  :=
  \sum_{i,j=1}^{nN} \frac{\partial \Fcc(\Wb) }{\partial \,w_{ij}}  \, r_{ij}
  .
\end{equation}
We also define the partial trace operator $\Id_n \otimes \Tr_{N}: \CC^{nN \times nN} \to \CC^{n\times n}$ acting as $(\Id_n \otimes \Tr_N)\big[A\otimes \Bb\big] = A \Tr\, (\Bb)$ for any $A\otimes \Bb \in \CC^{nN\times nN} = \CC^{n\times n} \otimes \CC^{N\times N}$.
Then the following holds. 
\begin{lem}
  \label{lem:b1}
  Let $\tau \in \big(0,\min\{\gamma,(1-\gamma)\}\big)$.
  Denote $\Fcc_1(\Wb):= \Gb(z)$ and $\Fcc_2(\Wb) := \Gb(z)\,\efr(t)$.
  Then for $\star \in \{1,2\}$ we have
  \begin{equation}
    \label{eq:41}
      \EE \Big[ \Wb \Fcc_{\star}(\Wb)\Big]
      =
      \EE \Big[ \Wbt \, \nabla_{\Wbt} \Fcc_{\star}(\Wb) \Big]
      +
      \errJ_{\star}(z)
      ,
    \end{equation}
     where $\widetilde{\Wb}$ is an independent copy of $\Wb$ and the error terms  $\errJ_{\star}(z)$ are analytic in $z$ and satisfy the bounds
    \begin{equation}
      \label{eq:42} 
      \Big\|\errJ_{\star}(z) \Big\|_{\max}
      =
      O_{\prec}\bigg(\big(1 + |t|^3\big)\frac{N^{5\tau/2}}{N\sqrt{\eta_{0}}} \bigg)
    \end{equation}
    uniformly for $z \in \Omega$.
  \end{lem}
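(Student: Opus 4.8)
The plan is to prove Lemma~\ref{lem:b1} by a careful cumulant expansion of the left-hand side $\EE[\Wb \Fcc_\star(\Wb)]$, tracking the first few terms exactly and bounding the remainder. Writing $\Wb = \sum_\alpha (L_\alpha \otimes \Xb_\alpha + L_\alpha^* \otimes \Xb_\alpha^*)$ with the entries of each $\Xb_\alpha$ i.i.d., the quantity $\EE[\Wb \Fcc_\star(\Wb)]$ is a sum over entries $w_{ij}$ of terms $\EE[w_{ij} \cdot (\text{column } j \text{ of }\Fcc_\star)]$, and the standard cumulant expansion formula (the version derived in Appendix~\ref{sec:iid}) expresses each such term as $\sum_{k\ge 1} \frac{1}{k!}\sum \kappa_{k+1}(w_{ij},\dots) \EE[\partial^k_{w} \Fcc_\star]$ plus a truncation error. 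The first-order term ($k=1$) reconstructs precisely $\EE[\Wbt \,\nabla_{\Wbt}\Fcc_\star(\Wb)]$ because the covariance structure of $\Wb$ equals that of its independent copy $\Wbt$ — this is just the definition of the directional-derivative term on the right-hand side of \eqref{eq:41}. Everything of order $k\ge 2$, together with the truncation error, is collected into $\errJ_\star(z)$, and the bulk of the work is showing \eqref{eq:42}.

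To bound $\errJ_\star(z)$ I would proceed term by term in the cumulant order $k$. Each cumulant $\kappa_{k+1}$ of the (rescaled) i.i.d.\ entries is $O(N^{-(k+1)/2})$ by assumption \eqref{eq:4}, and after summing over the $\sim N^2$ pairs of indices (with the tensor-factor indices in $\CC^{n\times n}$ contributing only model-parameter constants) one gains a factor $N^{2}$; the $k$ derivatives $\partial^k_w \Gb$ each produce a factor of a resolvent entry, so by the Leibniz rule $\partial^k_w\Gb$ is a sum of products of $k+1$ resolvent blocks $G_{ab}$. The key input is the entry-wise local law \eqref{eq:32}: off-diagonal blocks satisfy $\|G_{ab}\|\prec (N\Im z)^{-1/2}$ and diagonal ones are $O(1)$, which on $\Omega$ (where $\Im z \gtrsim N^{-\tau}\eta_0$) gives the decay needed to beat the combinatorial $N^2$. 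Carrying this bookkeeping through, the leading contribution to the error comes from the $k=2$ term, which after the local-law estimates is of size $N^{-2/2}\cdot N^2 \cdot (N\Im z)^{-3/2}$-type, and converting $\Im z \ge N^{-\tau}\eta_0$ into powers of $N$ yields exactly the claimed $N^{5\tau/2}/(N\sqrt{\eta_0})$ after accounting for the $N^\tau$ losses from the three half-powers of $(N\Im z)^{-1}$ and an extra $N^{\tau/2}$ slack; higher $k$ are suppressed by further powers of $N^{-1/2+\tau}$. For $\Fcc_2(\Wb)=\Gb(z)\efr(t)$ one additionally differentiates $\efr(t)$, and since $\efr(t)$ is itself an exponential of an integral of $\Tr\Gb$ against $\partial_{\bar z}\ft$, each derivative of $\efr(t)$ brings down a factor $\imu t$ times $\frac{1}{\pi}\int_\Omega \partial_{\bar z}\ft\,\partial_w\Tr\Gb\,\mathrm d^2 z$, contributing the polynomial factor $1+|t|^3$ (from up to three derivatives surviving the truncation at cumulant order $3$); the integral over $\Omega$ against $\partial_{\bar z}\ft$ is bounded using \eqref{eq:23} and the size of the support, which costs only model-parameter and $\|g\|_{\Cc^2}$ constants. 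Analyticity of $\errJ_\star(z)$ in $z$ is immediate since it is a finite sum of expectations of products of resolvent entries (and $\efr(t)$ for $\star=2$), each analytic in $z$ on $\Omega$, plus a truncation remainder that is likewise analytic.

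The main obstacle I anticipate is not any single estimate but the precise accounting of the exponents: one must verify that after summing over indices, applying the Leibniz rule, and inserting the local law, \emph{every} term — including the truncation remainder at cumulant order $\sim 3(1/\tau)$ or wherever one cuts off — is bounded by $(1+|t|^3)N^{5\tau/2}/(N\sqrt{\eta_0})$ uniformly on $\Omega$, and in particular that the truncation order can be chosen (depending on $\varepsilon, D$ in the definition of $\prec$) so that the tail is negligible. A secondary subtlety is that $\Fcc_2$ depends on $\Wb$ both through $\Gb(z)$ and through $\efr(t)$, so the derivative $\nabla_{\Wbt}\Fcc_2$ and all higher derivatives must use the product rule consistently; care is needed to ensure that the first-order term still assembles into exactly $\EE[\Wbt\,\nabla_{\Wbt}\Fcc_2(\Wb)]$ with no leftover pieces, which it does precisely because the matching of covariances between $\Wb$ and $\Wbt$ is an algebraic identity independent of what function of $\Wb$ is being differentiated. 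Once the exponent bookkeeping is organized — most cleanly by introducing a notation for "a product of $k+1$ resolvent blocks, at least $k$ of which are off-diagonal" and proving a single lemma bounding its averaged and entry-wise norms via \eqref{eq:32}–\eqref{eq:33} — the rest is routine, and I would present it by doing the $k=1,2$ terms explicitly and indicating that $k\ge 3$ is handled identically with strictly better bounds.
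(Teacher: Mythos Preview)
Your overall strategy---cumulant expansion, identifying the first-order term with $\EE[\Wbt\,\nabla_{\Wbt}\Fcc_\star(\Wb)]$, and bounding the $k\ge 2$ contributions plus a truncation remainder via the entrywise local law---is exactly what the paper does. There are, however, three places where your sketch diverges from the paper's execution and would need correction.

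First, the index counting. For the $\|\cdot\|_{\max}$ bound you fix the outer block indices $(i,j)$ and write $(\Wb\Fcc_\star)_{ij}=\sum_{l=1}^N W_{il}F_{\star,lj}$, so the sum is over a \emph{single} index $l$, not $N^2$ pairs. At order $k=2$ one has three factors $\Wt_{il},\Wt_{\sigma_1},\Wt_{\sigma_2}$ each $O_\prec(N^{-1/2})$, and a string of resolvent blocks; distinguishing the cases $l=j$ and $l\ne j$ (the latter forcing one off-diagonal $G$-block bounded by $(N|\Im z|)^{-1/2}$) gives $O_\prec(N^{-3/2})$ and $O_\prec(N^{-3/2}(N|\Im z|)^{-1/2})$ respectively, and summing over $l$ produces $O_\prec(N^{-1}|\Im z|^{-1/2})$. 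Your ``$N^{-2/2}\cdot N^2\cdot(N\Im z)^{-3/2}$'' does not match this.

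Second, truncation. The paper does \emph{not} take a truncation order growing with $1/\tau$ or with the parameters $\varepsilon,D$ in the definition of $\prec$. It applies the cumulant expansion at fixed order $R=3$ with an integral remainder (Lemma~\ref{lem:bCumulant}); the remainder contains four $\Wt$-factors and is $O_\prec((1+|t|^3)N^{3\tau}/N^2)$ per term, hence $O_\prec((1+|t|^3)N^{3\tau}/N)$ after summing over $l$. This requires the local law for the resolvent with the $(il,li)$ blocks scaled by $s\in[0,1]$, which holds by a standard rank-one perturbation argument. Your variable-order approach would work but is more cumbersome and still needs the same perturbed-local-law input.

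Third, the derivatives of $\efr(t)$. When a derivative lands on $\efr(t)$ you get $\efr(t)\cdot\Ic$ with $\Ic=\frac{\imu t}{\pi}\int_\Omega \partial_{\bar\zeta}\ft\,\partial_\zeta\Tr[G_{j_2 i_2}(\zeta)A]\,\mathrm d^2\zeta$. To bound this you need Stokes' theorem to convert the area integral into a contour integral over $\partial\Omega$, where $\ft$ is supported only on $|\Im\zeta|=N^{-\tau}\eta_0$; this step is what produces the extra $N^\tau$ and $N^{3\tau/2}$ factors (distinguishing $l=i$ from $l\ne i$) and ultimately the $N^{5\tau/2}$ in \eqref{eq:42}. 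Your sketch mentions only ``bounded using \eqref{eq:23}'', which by itself is not enough: a direct estimate of the area integral would cost an extra $|\Im\zeta|^{-1}$ from $\partial_\zeta$ and would not close.
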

The proof of Lemma~\ref{lem:b1} relies on the cumulant expansion formula for real random variables (see, e.g., \cite{Spee83}) and is presented in Appendix~\ref{sec:iid}.
Below we will see that applying formula \eqref{eq:41} gives rise to the operator $\Scc : \CC^{nN\times nN} \to \CC^{nN\times nN}$ acting as
\begin{equation} 
  \label{eq:43}
  \Scc[\Rb]
  :=
  \EE\big[ \Wb \Rb \, \Wb \big]
\end{equation}
for any matrix $\Rb \in \CC^{nN\times nN}$.
The operator $\Scc$ can be written in terms of the self-energy operator $\Gamma$ and the partial trace operator $\Id_n \otimes \Tr_{N}$.
Indeed, for any  $\Rb = \sum_{i,j=1}^{N} R_{ij}\otimes \Eb_{ij}$ with $R_{ij}\in \CC^{n\times n}$ we have that
\begin{align}
  \label{eq:44}
  \Scc[\Rb]
  =
  \EE\big[\Wb  \Rb \, \Wb \big]
  =
  \Gamma \Big[ \frac{1}{N}\sum_{j=1}^{N} R_{j j} \Big] \otimes  \Ib_N 
  =
  \frac{1}{N} \Gamma \Big[ (\Id_n \otimes \Tr_N)\big[\Rb\big] \Big] \otimes \Ib_N
  ,
\end{align}
where
\begin{equation*}
  (\Id_n \otimes \Tr_N) \big[\Rb\big]
  =
  \sum_{i,j=1}^{N} (\Id_n \otimes \Tr_N) \big[R_{ij}\otimes \Eb_{ij}\big]
  =
  \sum_{j=1}^{N} R_{j j}
  .
\end{equation*}
It will be often convenient to decompose taking the trace on $\CC^{nN\times nN}$ into two steps,  first taking the partial trace $(\Id_n \otimes \Tr_N)$, and then applying the trace on the smaller space $\CC^{n\times n}$.

Denote $\Mb(z) := M(z) \otimes \Ib_N \in \CC^{nN\times nN}$.
Then $\Scc[\Mb(z)] = \Gamma[M(z)]\otimes \Ib_{N}$, and thus for any $N\in \NN$ the function $\Mb(z)$ satisfies the equation
\begin{equation}
  \label{eq:46}
  -\frac{1}{\Mb(z)}
  =
  z \Ib_{nN} - \Kb_{0} + \Scc\big[ \Mb(z)\big]
\end{equation}
with $\Kb_0:=K_0\otimes \Ib_N$.
Equation \eqref{eq:46} is a Dyson equation with a positivity preserving self-energy operator \eqref{eq:44} that admits  the solution $\Mb(z) = M(z) \otimes \Ib_N$ for any $z\in \CC\setminus \RR$ and $N\in \NN$.

For any $r \in \NN$ and $T \in \CC^{r\times r}$, denote by $\Cc_{T}$ the operator acting on $\CC^{\,r\times r}$ as the multiplication by $T$ from the left and from the right
\begin{equation}
  \label{eq:47}
  \Cc_{T}\big[R\,\big]
  =
  T R \,T
  .
\end{equation}
If $T$ is invertible, then so is $\Cc_{T}$, and $\Cc_{T}^{-1} = \Cc_{T^{-1}}$.
For $z\in\CC\setminus \RR$ and $N \in \NN$, we define the operators  
\begin{equation}
  \label{eq:48}
  \Bcc_z:=\Cc_{\Mb(z)}^{-1} - \Scc
  ,\qquad
  \Bc_{z}:=\Cc_{M(z)}^{-1} - \Gamma
\end{equation}
acting on $ \CC^{nN\times nN }$ and  $\CC^{n\times n }$ correspondingly.
Notice that the composition $\Cc_{M(z)}\Bc_z$ gives the stability operator for the Dyson equation \eqref{eq:7} introduced in \eqref{eq:34}.
Similarly, the composition $\Cc_{\Mb(z)}\Bcc_z = \Id - \Cc_{\Mb(z)} \Scc$ is the stability operator for the Dyson equation \eqref{eq:46}.
As mentioned in Section~\ref{sec:proof-1}, the inverse of the stability operator \eqref{eq:34} is uniformly bounded in the operator norm on the set $\{z \, : \, \theta < \rho\,(\Re z) < \theta^{-1}, \, \Im z \geq 0 \}$ for any $\theta \in (0,1)$.
This, together with the boundedness of $\|M(z)\|$ and $\|M^{-1}(z)\|$ established in part (i) of Proposition~\ref{pr:1} and the extension of $M(z)$ to $\CC$ defined in \eqref{eq:13}, implies that for any $\theta,\nu \in (0,1)$ and any $C>0$
\begin{equation}
  \label{eq:49}
  \sup \{\|\Bc_{z}^{-1}\| \, : \, \theta < \rho\,(\Re z) < \theta^{-1},  \, |z| \leq C \}
  \lesssim
  1
  .
\end{equation}
From \eqref{eq:44} and \eqref{eq:48} we have that for any $\Rb \in \CC^{nN \times nN}$
\begin{equation}
  \label{eq:50}
  (\Id_n \otimes \Tr_N) \, \Bcc_z \big[\Rb\big]
  =
  \Bc_z \, (\Id_n \otimes \Tr_N)\big[\Rb\big]
  .
\end{equation}

Now we define the adjoints of the operators $\Bcc_{z}$ and $\Bc_{z}$ (introduced in \eqref{eq:34}).
To this end, for any $r\in \NN$ we equip $\CC^{r \times r}$ with the scalar product 
\begin{equation}
  \label{eq:51}
  \la S,T \ra
  =
  \frac{1}{r} \Tr \big(S^* T \big)
\end{equation}
for all $S,T \in \CC^{r\times r}$.
We denote by $\Bc^*$ and $\Bcc^*$ the adjoints of $\Bc$ and $\Bcc$ with respect to the corresponding scalar products \eqref{eq:51}.
We also introduce the notation for the normalized trace functional $\la T \ra := \la I, T \ra = \frac{1}{r} \Tr \big(T \big)$.

We proceed to the analysis of $\EE[\efr(t)]$.
\begin{lem}
  \label{lem:2}
  Let $\gamma \in (0,1)$ and $\tau \in \big(0, \min\{\gamma,(1 - \gamma)\}/ 7 \big)$. Then
  \begin{equation}
    \label{eq:52}
    \frac{d}{dt} \EE[\efr(t)]
    =
    - \frac{t}{\pi^2} \int_{\Omega \times \Omega}  \frac{\partial \ft(z)}{\partial \overline{z}}  \frac{\partial \ft(\zeta)}{\partial \overline{\zeta}} \frac{\partial}{\partial \zeta} \EE\Big[  \sum_{i,j=1}^{n}  \Tr \Big(E_{ji} \,S_{ij}(z,\zeta) \Big)  \efr(t) \Big] \dzeta  \dz + \errK
  \end{equation}
  where $|\errK|\prec N^{-\tau}$ and 
  \begin{equation*}
    S_{ij}(z,\zeta)
    :=
    \frac{1}{N} \sum_{k,l = 1}^{N}  G_{l k}(z)  \frac{1}{M(z)} \Bc_{z}^{-1}[I_n ] \, \Gamma [E_{ij} ]   G_{k l}(\zeta)
    .
  \end{equation*}
\end{lem}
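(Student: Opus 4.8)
The plan is to differentiate $\EE[\efr(t)]$ in $t$ directly and then transform the resulting expression using the cumulant expansion of Lemma~\ref{lem:b1} together with the explicit form \eqref{eq:44} of the operator $\Scc$. From the definition \eqref{eq:37},
\[
\frac{d}{dt}\EE[\efr(t)]
=
\frac{\imu}{\pi}\int_{\Omega}\frac{\partial\ft(\zeta)}{\partial\overline\zeta}\,
\EE\big[(1-\EE)[\Tr\Gb(\zeta)]\,\efr(t)\big]\,d^2\zeta\,,
\]
so the task reduces to understanding $\EE[(1-\EE)[\Tr\Gb(\zeta)]\,\efr(t)]$. First I would write $\Tr\Gb(\zeta)$ using the defining resolvent identity $\Hb\Gb(\zeta)=\Ib+\zeta\Gb(\zeta)$, split $\Hb=\Kb_0+\Wb$, and isolate the term $\Tr(\Wb\Gb(\zeta)\,\efr(t))$, which is exactly of the form $\EE[\Wb\Fcc_2(\Wb)]$ with $\Fcc_2(\Wb)=\Gb(\zeta)\efr(t)$ appearing in Lemma~\ref{lem:b1}. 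Applying \eqref{eq:41}, this term becomes $\EE[\Wbt\,\nabla_{\Wbt}(\Gb(\zeta)\efr(t))]$ up to the admissible error $\errJ_2$ bounded by \eqref{eq:42}, which, after multiplication by $|t|$ and integration of $\partial_{\overline\zeta}\ft$ over $\Omega$, contributes to $\errK$ (here the smallness condition on $\tau$ relative to $\gamma$ and $1-\gamma$ enters, since $\|\partial_{\overline\zeta}\ft\|$-type integrals cost powers of $N^\tau$ and $\eta_0^{-1}$).

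Next I would compute the directional derivative $\nabla_{\Wbt}(\Gb(\zeta)\efr(t))$ by the product rule. There are two contributions: the derivative hitting $\Gb(\zeta)$, which by the standard formula $\nabla_{\Rb}\Gb = -\Gb\Rb\Gb$ produces $-\Gb(\zeta)\Wbt\Gb(\zeta)$; and the derivative hitting $\efr(t)$, which by \eqref{eq:37} and the Helffer--Sj\"ostrand representation \eqref{eq:28} produces a factor $\frac{\imu t}{\pi}\int_\Omega\partial_{\overline z}\ft(z)\,\nabla_{\Wbt}\Tr\Gb(z)\,d^2z$ times $\efr(t)$, and $\nabla_{\Wbt}\Tr\Gb(z)=-\Tr(\Gb(z)\Wbt\Gb(z))$. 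Taking expectation and using that $\Wbt$ is an independent copy of $\Wb$, the $\Wbt$-pairing acts precisely as the operator $\Scc$ from \eqref{eq:43}--\eqref{eq:44}: the first contribution yields (after tracing) a term proportional to $\EE[\Tr(\Scc[\cdot])\efr(t)]$ that combines with the left side to reconstruct the stability operator $\Id-\Cc_{\Mb}\Scc$, i.e. eventually $\Bc_z^{-1}$ on $\CC^{n\times n}$; the second, "diagonal" contribution generates the genuinely new double-resolvent structure, because the $\Wbt$ pairs a resolvent at spectral parameter $\zeta$ with a resolvent at spectral parameter $z$, producing the product $\frac1N\sum_{k,l}G_{lk}(z)\,(\text{structure matrices})\,G_{kl}(\zeta)$. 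Organizing the $n\times n$ bookkeeping with the basis $\{E_{ij}\}$ and the identity \eqref{eq:44}, together with $\Scc[\Mb]=\Gamma[M]\otimes\Ib_N$ and the definitions \eqref{eq:48}, should collect all the surviving terms into the single expression $\sum_{i,j}\Tr(E_{ji}S_{ij}(z,\zeta))$ with $S_{ij}$ as stated, while the $\partial/\partial\zeta$ arises from rewriting $\Gb(z)\Wbt\Gb(z)$-type quantities via $\partial_\zeta\Gb(\zeta)=\Gb(\zeta)^2$ and integrating by parts in $\zeta$ so that the derivative lands where it is written in \eqref{eq:52}.

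The remaining work is to verify that every term not of the displayed form is negligible. This means: (a) the term where $\nabla_{\Wbt}$ hits $\efr$ but the spectral parameters coincide (the "$z=\zeta$ self-interaction" which would be a variance-type contribution) must be shown, via the local laws \eqref{eq:32}--\eqref{eq:33} and the boundedness \eqref{eq:49} of $\Bc_z^{-1}$, to be lower order on $\Omega$; (b) the error $\errJ_2$ and the fluctuation of $\Tr\Gb$ around $\EE[\Tr\Gb]$ inside the remaining factors must be tracked through the $z$- and $\zeta$-integrations and absorbed into $\errK$ with the stated bound $|\errK|\prec N^{-\tau}$; (c) replacing $\Gb(z)$ by $\Mb(z)$ wherever it appears not inside the retained double-resolvent term, again using the local law, contributes only to $\errK$. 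The main obstacle I anticipate is precisely the careful accounting in step (b): one must keep all factors of $|t|$, $N^\tau$, and negative powers of $\eta_0=N^{-\gamma}$ under control while repeatedly applying the cumulant expansion and the local law, and the precise hierarchy $\tau<\min\{\gamma,1-\gamma\}/7$ is what makes all these errors simultaneously beat $N^{-\tau}$; the algebra identifying the exact operator $\frac{1}{M(z)}\Bc_z^{-1}[I_n]$ sandwiched between the two resolvents, as opposed to some other combination of $M$, $\Gamma$, and $\Bc_z^{-1}$, is delicate but is essentially forced once the stability operator is inverted correctly.
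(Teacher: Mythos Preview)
Your overall strategy matches the paper's: differentiate $\efr(t)$, pass to $\Wb\Gb$ via the resolvent identity, apply Lemma~\ref{lem:b1}, split $\nabla_{\Wbt}(\Gb\efr)$ by the product rule, absorb the $\nabla_{\Wbt}\Gb$ part into the stability operator, and read off $S_{ij}$ from the $\nabla_{\Wbt}\efr$ part. Two points, however, are genuine gaps rather than mere sloppiness.

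First, you invoke only the $\star=2$ case of Lemma~\ref{lem:b1}. The quantity that has to be expanded is $\EE[\efr\,(1-\EE)[\Wb\Gb]]=\EE[\Wb\Gb\,\efr]-\EE[\efr]\,\EE[\Wb\Gb]$, so both $\star=1$ and $\star=2$ are needed; the paper does them separately and subtracts, producing the combined error $\errC=\errJ_2-\EE[\efr]\errJ_1$. Without the $\star=1$ expansion you cannot isolate the centered object.

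Second, the step ``the first contribution combines with the left side to reconstruct the stability operator'' hides a nontrivial linearization. After taking $\EE_{\Wbt}$, the first contribution is $-\EE[\efr\,(1-\EE)[\Scc[\Gb]\Gb]]$, which is bilinear in $\Gb$ and does not directly yield $\Scc$ acting on $\EE[\efr\,(1-\EE)[\Gb]]$. The paper writes $\Scc[\Gb]\Gb=\Scc[\Mb]\Gb+\Scc[\Gb-\Mb]\Mb+\Scc[\Gb-\Mb](\Gb-\Mb)$, uses the Dyson equation \eqref{eq:46} to convert $\Scc[\Mb]$ into $\Mb^{-1}+(z\Ib-\Kb_0)$, cancels this against the left side from \eqref{eq:55}, and only then is $\Bcc_z$ isolated on $\EE[\efr\,(1-\EE)[\Gb]]$. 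The quadratic remainder is the error $\errH$ of \eqref{eq:63}, whose partial trace is controlled by the \emph{averaged} local law \eqref{eq:33} (giving $O_\prec(N^{-1}(\Im z)^{-2})$); your point (c) gestures at this but underestimates the work.

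Two smaller issues: your variable labels are swapped relative to the statement (in the paper $z$ is the outer variable and $\zeta$ the inner one, and $\partial_\zeta$ comes from $\Tr(\nabla_{\Wbt}\Gb(\zeta))=-\partial_\zeta\Tr(\Wbt\Gb(\zeta))$, not from integration by parts); and your point (a) about a ``$z=\zeta$ self-interaction'' does not correspond to any term in the actual argument---the only same-parameter contribution is precisely the one absorbed into $\Bcc_z$ plus the quadratic error $\errH$.
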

\begin{proof}
  We start by differentiating the function $\efr(t)$ in \eqref{eq:37} with respect to $t$
  \begin{equation}
    \label{eq:54}
    \frac{d}{dt} \EE[\efr(t)]
    =
    \frac{\imu}{\pi}\int_{\Omega} \frac{\partial \ft(z)}{\partial \overline{z}}  \EE \big[ \efr(t) (1-\EE)[\Tr \Gb(z)] \big] \dz
    .
  \end{equation}
  Recall that $\Gb(z) = (\Wb + \Kb_0 - zI_{nN})^{-1}$ with $\Kb_0 = K_0 \otimes \Ib_N$ and $\Wb$ defined in \eqref{eq:40}, which implies the following trivial identities
  \begin{equation}
    \label{eq:55}
    (z \Ib_{nN} - \Kb_0) (1-\EE)[ \Gb(z)]
    =
    (1-\EE)\big[(z \Ib_{nN} - \Kb_0) \Gb(z)\big]
    =
    (1-\EE)\big[\Wb \Gb(z)\big]
    .
  \end{equation}
  The resolvent matrix $\Gb(z)$ is an analytic function of $\Wb$ satisfying $\|\Gb(z)\| \leq  |\Im z|^{-1} $ for all $z\in \CC\setminus \RR$.
  Moreover, for any $\Rb \in \CC^{nN\times nN}$
  \begin{equation}
    \label{eq:56}
    \nabla_{\Rb}\,\Gb(z)
    =
    -\Gb(z) \Rb \,\Gb(z)
    .
  \end{equation}
  We recall that $\nabla_{\Rb} = \sum_{i,j}r_{ij} \partial_{ij}$ denotes the directional derivative with respect to $\Wb$ in the direction $\Rb = (r_{ij})$.
  Therefore, by applying the cumulant expansion formula \eqref{eq:41} with $\star = 1$
  and taking separately the partial expectation with respect to $\widetilde{\Wb}$ we get
  \begin{equation}
    \label{eq:57}
    \EE[\Wb \Gb(z)]
    =
    -\EE[\widetilde{\Wb} \Gb(z) \widetilde{\Wb} \Gb(z)] + \errJ_{1}(z)
    =
    -\EE\big[\Scc[\Gb(z)]\,\Gb(z)\big] + \errJ_{1}(z)
    ,
  \end{equation}
  where the operator $\Scc : \CC^{nN\times nN} \to \CC^{nN\times nN}$ was defined in \eqref{eq:43}.
  Similarly, using \eqref{eq:41} for $\star = 2$ and \eqref{eq:56} we have that
  \begin{align}
    \label{eq:59}
    \EE\Big[ \Wb \Gb(z)\efr(t)\Big]
    &=
      \EE\Big[ \widetilde{\Wb} \nabla_{\widetilde{\Wb}} \big( \Gb(z)\efr(t)\big)\Big] + \errJ_{2}(z) 
      \\ \nonumber
    &  =
      \EE\Big[ \widetilde{\Wb} \nabla_{\widetilde{\Wb}} \big( \Gb(z) \big) \efr(t)\Big] + \EE\Big[ \widetilde{\Wb}  \Gb(z) \nabla_{\widetilde{\Wb}} \big(\efr(t)\big)\Big] + \errJ_{2}(z)
    \\ 
    \nonumber
    &     =
      -\EE\Big[\Scc[\Gb(z)]\Gb(z) \efr(t)\Big] + \EE\Big[ \widetilde{\Wb}  \Gb(z) \nabla_{\widetilde{\Wb}} \big(\efr(t)\big)\Big] + \errJ_{2}(z)
      .
  \end{align}
  Combining \eqref{eq:57} and \eqref{eq:59} yields
  \begin{equation}
    \label{eq:60}
    \EE\Big[\efr(t) \big(1-\EE\big)\big[\Wb \Gb\big]\Big]
    =
    -\EE\Big[\efr(t)\big(1-\EE\big)\big[\Scc[\Gb]\,\Gb\big] \Big] + \EE\Big[ \widetilde{\Wb}  \Gb\, \nabla_{\widetilde{\Wb}} \big(\efr(t)\big)\Big]
    + \errC
    ,
  \end{equation}
   where we suppressed the argument $z$ for brevity, and defined the error matrix  
   \begin{equation} \label{eq:def errJ2}
   \errC(z) := \errJ_{2}(z) - \EE[\efr(t)] \errJ_{1}(z)
   \end{equation}
   with $\errJ_{1},\errJ_{2}$ from \eqref{eq:41}. 
   We rewrite the first term on the right-hand side in \eqref{eq:60} as
  \begin{align}
    \label{eq:62}
    &-\EE\Big[\efr(t)\big(1-\EE\big)\big[\Scc[\Gb]\,\Gb\big] \Big]
      \\ \nonumber
    & 
      =
      -\EE\Big[\efr(t)\big(1-\EE\big)\big[\Scc[\Mb]\,\Gb\big] \Big]
     -\EE\Big[\efr(t)\Scc \big[\big(1-\EE\big)[\Gb]\big]\Mb \Big]
      + \errH 
    \\ 
    \nonumber
    & 
      =
      \EE\Big[\efr(t)\Big(\frac{1}{\Mb} + z\Ib_{nN} - \Kb_{0}\Big) \big(1-\EE\big)\big[\Gb\big] \Big]    
      -\EE\Big[\efr(t)\big(1-\EE\big)\big[\Scc[\Gb]\Mb\big] \Big]
      + \errH 
      ,
  \end{align}
  where we used that $\Mb$ satisfies \eqref{eq:46}, and introduced the error term
  \begin{equation}
    \label{eq:63}
    \errH(z)
    :=
    -\EE\Big[\efr(t)\big(1-\EE\big)\big[\Scc[\Mb(z)-\Gb(z)]\big( \Mb(z) -\Gb(z) \big)\big] \Big]
    .
  \end{equation} 
  The identities \eqref{eq:60} and \eqref{eq:62} substituted into \eqref{eq:55} after an elementary cancellation give
  \begin{equation*}
    \frac{1}{\Mb}\EE\Big[\efr(t)\big(1-\EE\big)\big[\Gb\big] \Big]
    =
    \EE\Big[\efr(t)\big(1-\EE\big)\big[\Scc[\Gb]\Mb\big] \Big]
    -\EE\Big[ \widetilde{\Wb}  \Gb \, \nabla_{\widetilde{\Wb}} \big(\efr(t)\big)\Big]
    - \errC  - \errH 
    .
  \end{equation*} 
  Multiplying the above equation by $\Mb^{-1}(z)$ from the right and rearranging the terms yields
  \begin{equation}
    \label{eq:65}
    \Bcc_z \, \EE\Big[\efr(t)\big(1-\EE\big)\big[\Gb\big] \Big]
    =
    -\EE\Big[ \widetilde{\Wb}  \Gb \, \nabla_{\widetilde{\Wb}} \big(\efr(t)\big)\Big] \frac{1}{\Mb }
    - \Big(\errC + \errH  \Big) \frac{1}{\Mb }
    ,
  \end{equation}
  where $\Bcc_z$ was defined in \eqref{eq:48}. 
  By applying $(\Id_n \otimes \Tr_N)$ on both sides of \eqref{eq:65} and using \eqref{eq:50} we get
  \begin{equation}
    \label{eq:66}
    (\Id_n\otimes \Tr_N)\EE\Big[\efr(t)\big(1-\EE\big)\Gb \Big]
    =
    -\Bc_{z}^{-1} (\Id_n\otimes \Tr_N) \Big[\EE\big[ \widetilde{\Wb}  \Gb \,\nabla_{\widetilde{\Wb}} \, \big(\efr(t)\big)\big]  \frac{1}{\Mb } \Big] + \errL 
    ,
  \end{equation}
  where
  \begin{equation}
    \label{eq:67}
    \errL (z)
    :=
    -\Bc_{z}^{-1} (\Id_n\otimes \Tr_N) \Big[ \Big( \errC(z) + \errH (z) \Big) \frac{1}{\Mb(z) } \Big]
    .
  \end{equation}
  Notice that $\efr(t)$ (see \eqref{eq:37}) is a bounded function of $\Wb$.
  Taking the directional derivative of $\efr(t)$ with respect to $\Wb$ in the direction $\Wbt$ in \eqref{eq:65} gives 
  \begin{align}
    \nonumber
    \EE\Big[ \widetilde{\Wb}  \Gb(z) \nabla_{\widetilde{\Wb}} \big(\efr(t)\big)\Big]
    &=
      \EE\Big[\widetilde{\Wb} \Gb(z) \efr(t) \nabla_{\widetilde{\Wb}}\Big( \frac{\imu t}{\pi} \int_{\Omega} \frac{\partial \ft(\zeta)}{\partial \overline{\zeta}}  (1-\EE)\big[\Tr \Gb(\zeta) \big] \dzeta \Big) \Big]
    \\ \label{eq:69}
    &=
      \frac{\imu t}{\pi} \EE\Big[\widetilde{\Wb} \Gb(z) \efr(t)    \int_{\Omega}   \frac{\partial \ft(\zeta)}{\partial \overline{\zeta}}    \Tr \Big(\nabla_{\widetilde{\Wb}} \Gb(\zeta)  \, \Big) \,\dzeta \, \Big]  
      .
  \end{align}
  Using the  relation $\frac{\partial}{\partial \zeta} \Gb(\zeta) = \Gb^2(\zeta)$, we get from \eqref{eq:56} and the cyclicity of the trace that
  \begin{equation*}
    \Tr \Big(\nabla_{\widetilde{\Wb}}\Gb(\zeta) \Big)
    =
    - \Tr \Big(\widetilde{\Wb} \frac{\partial}{\partial \zeta} \Gb(\zeta)\Big)
    .
  \end{equation*}
  Combining this with \eqref{eq:69}  gives
  \begin{equation}
    \label{eq:71}
    \EE\Big[ \widetilde{\Wb}  \Gb(z) \nabla_{\widetilde{\Wb}} \big(\efr(t)\big)\Big]
    =
    - \frac{\imu t}{\pi}  \EE\Big[\widetilde{\Wb} \Gb(z) \efr(t) \int_{\Omega}   \frac{\partial \ft(\zeta)}{\partial \overline{\zeta}} \Tr \Big(\widetilde{\Wb} \frac{\partial}{\partial \zeta}\Gb(\zeta) \Big) \,\dzeta  \Big] 
    .
  \end{equation}
  After plugging \eqref{eq:71} into \eqref{eq:66}, using the linearity of $\Bc^{-1}_{z}$ and taking the trace we have that 
  \begin{equation*}
    \EE\big[ \efr(t) (1-\EE)[\Tr \Gb(z)]\big]
    =
    \frac{\imu t}{\pi} \int\displaylimits_{\Omega}   \frac{\partial \ft (\zeta)}{\partial \overline{\zeta}} \frac{\partial}{\partial \zeta} \, h_{1}(z,\zeta) \, \dzeta + \Tr \errL
  \end{equation*}
    with
  \begin{equation*}
    h_{1}(z,\zeta)
    :=
    \EE\bigg[ \Tr \Big(  \Bc_{z}^{-1} (\Id_n\otimes \Tr_N)\Big[ \widetilde{\Wb} \Gb(z) \frac{1}{\Mb(z)}\Big] \Big) \Tr \Big(\widetilde{\Wb} \Gb(\zeta) \Big)  \efr(t) \bigg]
    .
   \end{equation*}
  Together with \eqref{eq:54} we obtain a formula for the derivative of $\EE[\efr(t)]$, namely, 
  \begin{align}
    \label{eq:73}
    \frac{d}{dt} \EE[\efr(t)]
     & =
      - \frac{t}{\pi^2} \int\displaylimits_{\Omega \times \Omega}  \frac{\partial \ft(z)}{\partial \overline{z}}  \frac{\partial \ft(\zeta)}{\partial \overline{\zeta}} \frac{\partial}{\partial \zeta} \, h_{1}(z,\zeta)\,  \dzeta \dz
      +
      \errK
      ,
  \end{align} 
   where
  \begin{equation}
    \label{eq:74}
    \errK
    :=
    \frac{\imu}{\pi} \int_{\Omega} \frac{\partial \ft(z)}{\partial \overline{z}}  \Tr \errL (z) \dz
    .
  \end{equation}
  With the Hilbert space structure on $\CC^{n \times n}$ introduced in \eqref{eq:51}, we find
  \begin{equation*}
   \Tr \Big(  \Bc_{z}^{-1} \big(\Id_n\otimes \Tr_N \big)\Big[\widetilde{\Wb} \Gb(z) \frac{1}{\Mb(z)} \Big] \Big)
    =
    \Tr \bigg(  \Big(\big(\Bcc_{z}^{\,-1}\big)^{*}\big[\Ib_{nN}\big] \Big)^{*} \, \widetilde{\Wb} \Gb(z) \frac{1}{\Mb(z)} \bigg)
    ,
  \end{equation*}
  where we used that 
  \begin{equation}
    \label{eq:76}
    (\Bcc^{*}_{z})^{-1}[\Ib_{nN}]
    =
    (\Bc^{*}_{z})^{-1}[I_{n}]\otimes \Ib_N
  \end{equation}
  following from the definitions \eqref{eq:48}.
  Moreover, from the cyclicity of the trace we have
  \begin{align}
    \label{eq:77}
     &
      \EE_{\Wbt}\Big[ \Tr \Big( \Bcc_{z}^{-1}\Big[ \widetilde{\Wb} \Gb(z)  \frac{1}{\Mb(z)} \Big] \Big) \Tr \Big(\widetilde{\Wb} \Gb(\zeta) \Big)\Big]
    \\ 
    & 
      =
      \EE_{\Wbt}\Big[ \Tr \Big(\Gb(z)  \frac{1}{\Mb(z)} \big((\Bcc_{z}^{*})^{-1}[\Ib_{nN} ]\big)^*  \widetilde{\Wb} \Big) \Tr \Big(\widetilde{\Wb} \Gb(\zeta) \Big)  \Big]
    \\ \label{eq:78} 
    &
      = \sum_{i,j=1}^{n}\sum_{p,q=1}^{N} \Tr \Big((E_{ji}\otimes \Eb_{q p}) \Gb(z)  \frac{1}{\Mb(z)} \big((\Bcc_{z}^{*})^{-1}[\Ib_{nN} ]\big)^* \Scc [E_{ij}\otimes \Eb_{p q}] \Gb(\zeta) \Big)
      ,
  \end{align}
  where $\{E_{ij}\otimes \Eb_{p q}\, : \, 1 \leq i,j \leq n, 1 \leq p, q \leq N\}$ is the standard basis in $\CC^{nN\times nN} = \CC^{n\times n} \otimes \CC^{N \times N}$, and we used that for any $\Sb,\Tb \in \CC^{n N \times n N}$
  \begin{equation}
    \label{eq:363}
    \Tr (\Sb) \Tr( \Tb) = \sum_{i,j,p,q} \Tr \Big( (E_{ji}\otimes \Eb_{qp})\, \Sb \, (E_{ij}\otimes \Eb_{p q}) \, \Tb \Big)
    .
  \end{equation}
  In \eqref{eq:78} the operator $\Scc$ appears as the result of taking the partial expectation with respect to $\Wbt$ (see \eqref{eq:43}).
  From \eqref{eq:44} we see that the operator $\Scc$ acts on the basis vectors as
  \begin{equation}
    \label{eq:79}
    \Scc [E_{ij}\otimes \Eb_{p q}]
    =
    \left\{
      \begin{array}{ll}
        0,& \mbox{ if }p \neq q,
        \\
        N^{-1}\Gamma[E_{ij}]\otimes \Ib_{N}, & \mbox{ if } p  = q.
      \end{array}
    \right.
  \end{equation}
  Combining \eqref{eq:79}, \eqref{eq:76} and \eqref{eq:78}, leads to the simplified expression of $h_{1}(z,\zeta)$,
  \begin{align}
    \nonumber
    h_{1}(z,\zeta)
    &=\EE\Big[ \Tr \Big( \Bcc_{z}^{-1}\Big[  \widetilde{\Wb} \Gb(z) \frac{1}{\Mb(z)} \Big] \Big) \Tr \Big(\widetilde{\Wb} \Gb(\zeta) \Big)  \efr(t) \Big]
    \\ \label{eq:80}
    &=
    \EE\Big[ \frac{1}{N} \sum_{i,j=1}^{n} \sum_{k,l = 1}^{N}  \Tr \Big(E_{ji} G_{l k}(z)  \frac{1}{M(z)} \big((\Bc_{z}^{*})^{-1}[I_n ]\big)^*   \Gamma [E_{ij} ]   G_{k l}(\zeta) \Big)  \efr(t) \Big]
    .
  \end{align} 
  Since $\big(\Bc_{z}[R]\big)^{*}=\Bc_{z}^{*}[R^{*}]$ for any $R \in \CC^{n\times n}$, we have that $\big((\Bc_{z}^{*})^{-1}[I_n ]\big)^* = \Bc_{z}^{-1}[I_n]$.
  Together with \eqref{eq:73} this gives the leading term in \eqref{eq:52}.

  It remains to show that $|\errK| \prec N^{-\tau}$, where $\errK$ is defined in terms of $\errL,\errH$ and $\errC$ through \eqref{eq:74}, \eqref{eq:67}, \eqref{eq:63} and \eqref{eq:def errJ2}.
  First, from the bounds of the error terms \eqref{eq:42} in the  cumulant expansion formula \eqref{eq:41}, we have   
  \begin{equation}\label{eq:bound on errC}
  \| \errC(z)\|_{\max} = O_{\prec}\big((1+|t|^{3})N^{5 \tau/2} N^{-1} \eta_0^{-1/2}\big)
  \end{equation}
  uniformly for $z\in \Omega$.
    Next we use the local laws \eqref{eq:32} and \eqref{eq:33} to estimate  the error term $\errL$ from \eqref{eq:67} and  $\errH$ from \eqref{eq:63}.
  By \eqref{eq:44} we see the identity
  \begin{equation*}
    \Scc[\Mb(z) - \Gb(z)]
    =
    \Gamma\Big[M(z) - \frac{1}{N}\sum_{j=1}^{N} G_{jj}(z) \Big]\otimes \Ib_N
    .
  \end{equation*}
  Therefore, we can rewrite the term appearing in the definition of $\errH (z)$ in \eqref{eq:63} as
  \begin{multline*}
    \Scc[\Mb(z)-\Gb(z)]\big( \Mb(z) -\Gb(z) \big)
    \\=
    \sum_{i,j=1}^{N} \Gamma\Big[M(z) - \frac{1}{N}\sum_{k=1}^{N} G_{kk}(z) \Big]\big( M(z)\,\delta_{ij} - G_{ij}(z) \big) \otimes \Eb_{ij}
    .
  \end{multline*}
  By applying the partial trace $(\Id_n\otimes \Tr_N) $ to the above identity and using the averaged local law  \eqref{eq:33}, we get
  \begin{equation}
    \label{eq:83}
    \Big\| (\Id_n\otimes \Tr_N) \Big[\Scc[\Mb(z)-\Gb(z)]\big( \Mb(z) -\Gb(z) \big) \Big] \Big\|
    \prec
    \frac{1}{N (\Im z)^{2}}
  \end{equation}
  uniformly on $\Omega$. 
  It follows from Proposition~\ref{pr:1} that $\|M^{-1}(z)\|$ and $\|\Bc_z^{-1}\|$ are uniformly bounded  on $\Omega$.
  Using this, the bound on $\errC(z)$ from \eqref{eq:bound on errC} and \eqref{eq:83} we have the estimate
  \begin{equation}
    \label{eq:84}
    \| \errL (z) \|
    \prec
    N^{-1}( \Im z)^{-2} + (1 + |t|^{3}) N^{5\tau/2}\eta_{0}^{-1/2}
  \end{equation}
  holding uniformly on $\Omega$.
  By Stokes' theorem, for any function $H:\Omega \to \CC$ with continuously differentiable real and imaginary parts
  \begin{equation}
    \label{eq:85}
    \int_{\Omega} \frac{\partial H(z) }{\partial \overline{z}} \dz 
    =
    \frac{-\imu}{2} \int_{\partial \Omega} H(z) dz
    .
  \end{equation}
  Using now \eqref{eq:85} and the analyticity of $\Tr \errL(z)$ on $\Omega$, we rewrite $\errK$ defined in \eqref{eq:74} as
  \begin{equation}
    \label{eq:86}
    \errK
    =
     \frac{\imu}{\pi} \int_{\Omega} \frac{\partial \ft(z)}{\partial \overline{z}}  \Tr \errL (z) \, \dz 
    =
     \frac{1}{2\pi} \int_{\partial \Omega} \ft(z)  \Tr \errL (z) \, dz
    .
  \end{equation}
  Since $\ft(z)$ vanishes everywhere on $\partial \Omega$ except the lines $|\Im z| = N^{-\tau} \eta_0$, we obtain from \eqref{eq:23}, \eqref{eq:26} and \eqref{eq:84} the estimate
  \begin{multline}
    \label{eq:87}
    |\errK|
    \prec
    \int_{E_0-\delta}^{E_0+\delta} \big(|f(x)| +  N^{-\tau}\eta_0 |f'(x)|\big) \bigg(\frac{1}{N(N^{-\tau} \eta_0)^{2}} + \frac{N^{5\tau/2}}{\sqrt{\eta_0}} \bigg)dx
    \\
    \leq
    \big(\|g\|_{1} + \|g'\|_{1}\big)\Big(\frac{N^{2\tau}}{N \eta_0} + N^{ 5\tau / 2}\sqrt{\eta_0} \Big)
    .
  \end{multline}
  From the assumption $\tau \in (0, \min\{\gamma,(1 - \gamma)\}/ 7)$ we have that $N\eta_0 = N^{1-\gamma} > N^{ 7\tau}$ and $\sqrt{\eta_0} = N^{-\gamma/2} < N^{- 7\tau/2 }$ , which together with \eqref{eq:87} establishes the estimate for $|\errK|$.
\end{proof}

\subsection{Equation for $S_{ij}(z,\zeta)$}
\label{sec:proof-3}

Our next step is to analyze the term
\begin{equation*}
  S_{ij}(z,\zeta)
  :=
  \frac{1}{N} \sum_{k,l = 1}^{N}  G_{l k}(z) \Bc_{z}^{-1}[I_n ] M(z)  \Gamma [E_{ij} ]   G_{k l}(\zeta)
\end{equation*}
appearing in \eqref{eq:52}.
It will be convenient to consider a more general quantity of the form
\begin{equation}
  \label{eq:89}
  G^{B}(z,\zeta)
  :=
  \frac{1}{N}\sum_{k,l = 1}^{N} G_{l k}(z) B G_{k l}(\zeta)
\end{equation}
with an arbitrary fixed deterministic $B\in\CC^{n\times n}$.
We can then specialize $G^{B}(z,\zeta)$ to $S_{ij}(z,\zeta)$ by taking $B = \Bc_{z}^{-1}[I_n ]  M(z)  \Gamma [E_{ij} ] $.
We start by showing that $G^{B}(z,\zeta)$ satisfies the following self-consistent equation with random error term.
The derivation is reminiscent of the proofs used in some earlier works on the mesoscopic CLT for random matrices with independent entries (e.g., \cite[Lemma~4.3]{LiXu21}, \cite[Proposition~5.1 and Lemma~5.2]{LandLopaSoso23} or \cite[Theorem~3.2]{Riab}). 
\begin{lem}
  \label{lem:3}
  Let $\gamma \in (0,1)$ and $\tau \in (0, 1-\gamma)$. Then for any $B\in \CC^{n\times n}$ 
  \begin{equation}
    \label{eq:90}
    G^{B}(z,\zeta)
    =
    M(z) B M(\zeta) + M(z) \Gamma\big[ G^{B}(z,\zeta) \big] M(\zeta) + \errE^{B}(z,\zeta)
  \end{equation}
  with the error term $\errE^{B}(z,\zeta)$ being analytic on $\Omega\times \Omega$ and satisfying
  \begin{equation}
    \label{eq:91}
    \EE[\| \errE^{B}(z,\zeta) \|]
    \prec
    \frac{1}{N^{1/2} (\min\{|\Im z|,|\Im \zeta|\})^{3/2}}
    .
  \end{equation}
  uniformly for $(z,\zeta)\in \Omega\times \Omega$.
\end{lem}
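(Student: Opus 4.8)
The plan is to derive the self-consistent equation for $G^B(z,\zeta)$ by applying the cumulant expansion (Lemma~\ref{lem:b1} and the identities \eqref{eq:43}--\eqref{eq:44}) to one of the two resolvent factors, and then to use the averaged and entry-wise local laws \eqref{eq:32}--\eqref{eq:33} to control the resulting error terms. First I would write $\Gb(z)=\Mb(z)+(\Gb(z)-\Mb(z))$ nowhere yet, but instead start from the identity $\Gb(z)\,(\Hb-z)=\Ib_{nN}$, i.e. $\Gb(z)=\Mb(z)-\Mb(z)\Wb\Gb(z)+\Mb(z)\,\Scc[\Mb(z)]\,\Gb(z)-\Mb(z)(z\Ib_{nN}-\Kb_0)\Gb(z)+\dots$; more cleanly, using \eqref{eq:46} one gets the expansion $\Gb(z) = \Mb(z) - \Mb(z)\Wb\Gb(z) + \Mb(z)\,\Scc[\Mb(z)]\,\Gb(z)$. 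Substituting this into the definition \eqref{eq:89} of $G^B(z,\zeta)$ for the $z$-resolvent, the leading term $M(z)BM(\zeta)$ appears from $\frac1N\sum_{k,l}(M(z)\delta_{lk})B G_{kl}(\zeta)=M(z)B\cdot\frac1N\sum_k G_{kk}(\zeta)$, which by \eqref{eq:33} equals $M(z)BM(\zeta)$ up to an error of size $(N|\Im\zeta|)^{-1}$. The term $\Mb(z)\Scc[\Mb(z)]\Gb(z)$ contributes, after taking the partial trace in the second tensor factor, precisely $M(z)\Gamma[\,\cdot\,]$ acting on $\frac1N\sum_{k,l}G_{lk}(z)BG_{kl}(\zeta)=G^B(z,\zeta)$ — this is how the self-consistent structure $M(z)\Gamma[G^B(z,\zeta)]M(\zeta)$ emerges.

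The remaining term is the genuinely random one coming from $-\Mb(z)\Wb\Gb(z)$; it has the schematic form $-\frac1N\sum_{k,l}\big(M(z)(\Wb\Gb(z))_{lk}\big)B\,G_{kl}(\zeta)$, and the key step is to apply the cumulant expansion (as in Lemma~\ref{lem:b1}, with $\Fcc$ now the product of two resolvents at $z$ and $\zeta$) to replace $\EE[\Wb\,(\cdot)]$ by $\EE[\Wbt\,\nabla_{\Wbt}(\cdot)]$ plus a negligible third-and-higher-cumulant error. The derivative $\nabla_{\Wbt}$ hits either $\Gb(z)$ or $\Gb(\zeta)$, producing $-\Gb(z)\Wbt\Gb(z)$ or $-\Gb(\zeta)\Wbt\Gb(\zeta)$; taking the partial expectation in $\Wbt$ converts $\Wbt(\cdot)\Wbt$ into $\Scc[\cdot]$ via \eqref{eq:43}--\eqref{eq:44}. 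The contribution where the derivative hits the $z$-resolvent regenerates (part of) the left-hand side and the self-consistent term — this has to be bookkept carefully so that the stability operator $\Bc_z$ does not explicitly appear (unlike in Lemma~\ref{lem:2}, here $B$ is arbitrary and the equation \eqref{eq:90} is left in its implicit, not-yet-inverted form). The contribution where the derivative hits the $\zeta$-resolvent produces a term of the form $M(z)\Gamma[\langle\cdot\rangle]\cdot\frac1N\sum G(z)BG^2(\zeta)\cdots$ which, after using $\partial_\zeta\Gb(\zeta)=\Gb^2(\zeta)$ and the local laws, is shown to be of order $N^{-1}(\min\{|\Im z|,|\Im\zeta|\})^{-2}$ and hence absorbed into $\errE^B$.

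The error bound \eqref{eq:91}, with the $N^{-1/2}$ and the power $3/2$, comes from the fluctuation-averaging heuristic: the ``worst'' error term is a single off-diagonal sum $\frac1N\sum_{k,l}(M(z)(\Wb\Gb(z))_{lk})B\,G_{kl}(\zeta)$ after the leading cumulant contributions have been subtracted, which one estimates either directly by a large deviation / fluctuation-averaging bound or by writing it as $\big\langle\,(\text{resolvent})\,\Wb\,(\text{resolvent})\,\big\rangle$-type quantity; bounding $\|\Gb(z)\|,\|\Gb(\zeta)\|\le(\min\{|\Im z|,|\Im\zeta|\})^{-1}$, using $\|\Wb\|\lesssim 1$ with overwhelming probability, and extracting one extra factor $N^{-1/2}$ from the martingale/fluctuation structure of the off-diagonal entries (as in the cited works \cite[Lemma~4.3]{LiXu21}, \cite[Proposition~5.1]{LandLopaSoso23}, \cite[Theorem~3.2]{Riab}) gives exactly $N^{-1/2}(\min\{|\Im z|,|\Im\zeta|\})^{-3/2}$. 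Analyticity of $\errE^B$ on $\Omega\times\Omega$ is automatic since it is a finite algebraic combination of resolvents, which are analytic off the real axis, and the third-cumulant remainder from Lemma~\ref{lem:b1} is analytic as well. The main obstacle I anticipate is the careful combinatorial bookkeeping in the cumulant expansion: isolating exactly the two structural terms $M(z)BM(\zeta)$ and $M(z)\Gamma[G^B(z,\zeta)]M(\zeta)$ while verifying that every other term — in particular the ones where $\nabla_{\Wbt}$ acts on the $\zeta$-resolvent and the ones coming from replacing $\frac1N\sum_k G_{kk}$ by $M$ — truly sits below the threshold in \eqref{eq:91}, uniformly over the (two-dimensional) domain $\Omega\times\Omega$ where the two spectral parameters may be at very different heights above the real axis.
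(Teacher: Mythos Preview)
Your approach via the cumulant expansion is a legitimate alternative to what the paper does, but your bookkeeping is off in a way that would prevent the argument from closing. The paper does \emph{not} use Lemma~\ref{lem:b1} here at all; it uses the Schur complement / minor--resolvent method: write $G_{lk}(z)=-M(z)\sum_{p\neq l}W_{lp}G^{(l)}_{pk}(z)+O_\prec((N\eta)^{-1})$ from the Schur formula and the local law, multiply out $G_{lk}(z)BG_{kl}(\zeta)$ in this form, and take the partial expectation $\EE_l$ over the $l$-th row/column. Because $G^{(l)}$ is independent of $\{W_{lp}\}$, $\EE_l$ produces exactly $\Gamma[\frac1N\sum_p G^{(l)}_{pk}(z)BG^{(l)}_{kp}(\zeta)]$, which after replacing $G^{(l)}$ by $G$ gives $\Gamma[G^B]$; the diagonal term $G_{kk}(z)BG_{kk}(\zeta)\approx M(z)BM(\zeta)$ is treated separately. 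The random error is the martingale-type fluctuation $D_k=\sum_{l\neq k}(1-\EE_l)[G_{lk}(z)BG_{kl}(\zeta)]$, bounded by a direct $L^2$ argument exploiting that cross terms $\EE[(1-\EE_l)[\cdots](1-\EE_j)[\cdots]]$ nearly vanish for $l\neq j$.

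Your route could in principle be made to work, but the mechanism you describe is wrong in two places. First, the term $\Mb(z)\Scc[\Mb(z)]\Gb(z)$ does \emph{not} yield $M(z)\Gamma[G^B]M(\zeta)$: since $\Scc[\Mb(z)]=\Gamma[M(z)]\otimes\Ib_N$, the partial trace gives $M(z)\Gamma[M(z)]\,G^B(z,\zeta)$, i.e.\ $\Gamma$ acting on $M(z)$, not on $G^B$. Second, the contribution where $\nabla_{\Wbt}$ hits the $\zeta$-resolvent is \emph{not} an error: it produces $-\Scc[\Gb(z)(B\otimes\Ib_N)\Gb(\zeta)]\Gb(\zeta)$, whose partial trace is $-\Gamma[G^B]\cdot\frac1N\sum_kG_{kk}(\zeta)\approx-\Gamma[G^B]M(\zeta)$, and after the $-M(z)$ prefactor this is precisely the self-consistent term $M(z)\Gamma[G^B]M(\zeta)$. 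Meanwhile the $z$-derivative contribution gives $M(z)\Gamma[M(z)]G^B$, which \emph{cancels} the $-\Mb\Scc[\Mb]\Gb$ term from the expansion. You have the roles reversed. Finally, Lemma~\ref{lem:b1} is an identity for $\EE[\Wb\Fcc]$, so it gives an equation for $\EE[G^B]$; to get the pathwise equation~\eqref{eq:90} with random $\errE^B$ you would need either a self-renormalization device (the ``underline'' formalism of Cipolloni--Erd\H{o}s--Schr\"oder) or a separate bound on $G^B-\EE[G^B]$, neither of which you mention.
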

\begin{proof}
  Similarly as in \eqref{eq:30}, we define the (left) matrix coefficients $\{W_{ij}\}_{1\leq i,j\leq N}$ via $\Wb = \sum_{i,j=1}^{N} W_{ij} \otimes \Eb_{ij}$ with $W_{ij} \in \CC^{n\times n}$.
  Directly from the definition \eqref{eq:39} we get
  \begin{equation}
    \label{eq:93}
    W_{ij}
    =
    \sum_{\alpha=1}^{d}\bigg(L_{\alpha}  \, x^{(\alpha)}_{ij} + L_{\alpha}^{*}  \, \overline{x}^{\,(\alpha)}_{ji}\bigg)
    ,
  \end{equation}
  where $x^{(\alpha)}_{ij}$ are the entries of $\Xb_{\alpha}$.
  For any $i\in \{1,\ldots, N\}$, we denote by $\Xb_{\alpha}^{(i)}$ the random i.i.d. matrix $\Xb_{\alpha}$ with the $i$-th row and $i$-th column removed.
  To make the notation consistent and easier to follow, we index the rows and columns of $\Xb_{\alpha}^{(i)}$ by $\{1,\ldots, N\}\setminus \{i\}$.
  The resolvent of the model with removed $i$-th rows and columns in each $N\times N$ block is
  \begin{equation*}
    \Gb^{(i)}(z)
    :=
    \bigg(K_0\otimes \Ib_{N-1} + \sum_{\alpha=1}^{d}\Big( L_{\alpha} \otimes \Xb_{\alpha}^{(i)} + L_{\alpha}^{*} \otimes \big(\Xb_{\alpha}^{(i)}\big)^{*} \Big) - z \Ib_{n(N-1)} \Big)^{-1}
    ,
  \end{equation*}
  and the corresponding (left) coefficient matrices are
  \begin{equation*}
    \Gb^{(i)}(z)
    =
    \sum_{p,q \neq i} G^{(i)}_{p q}(z) \otimes \Eb_{p q}
    .
  \end{equation*}
  With this notation, the Schur complement formula yields
  \begin{align}
    \label{eq:96}
    &\frac{1}{G_{ii}(z)}
      =
      W_{ii} + K_0 - z - \sum_{p, q \neq i} W_{i p} \,G^{(i)}_{p q}(z) \, W_{q i}
      ,
    \\ \label{eq:97}
    &G_{ij}(z)
      =
      -G_{ii}(z) \sum_{p \neq i} W_{i p} \, G^{(i)}_{p j}(z)
      =
      -\sum_{p \neq j} G^{(j)}_{i p }(z) W_{p j }  \, G_{jj}(z)
  \end{align}
  for all $1\leq i,j\leq N$, $i\neq j$.
  Moreover, for $l \neq k$ we have
  \begin{align}
    \label{eq:98}
    G_{l k}
    =
    M\frac{1}{G_{ll}}G_{lk} + (G_{ll} - M) \frac{1}{M + (G_{ll} - M)}G_{lk}
    ,
  \end{align}
  where we dropped the spectral parameter $z$ for brevity.
  Denote $\eta := |\Im z|$.
  From the local law \eqref{eq:32}, the bounds $\|G_{ll} - M\| \prec (N\eta)^{-1/2}$ and  $ \|G_{lk} \| \prec (N\eta)^{-1/2}$ hold uniformly on $\Omega$.
  The functions $\|M\|$ and $\|M^{-1}\|$ are uniformly bounded on $\Omega$  (see Proposition~\ref{pr:1}), which, in turn, implies that $\|M\|\sim 1 $ uniformly on  $\Omega$.
  After applying the local law \eqref{eq:32} and the first equality in \eqref{eq:97} to the first term in \eqref{eq:98}, we arrive at
  \begin{equation}
    \label{eq:99}
    G_{lk}
    =
    - M \sum_{p \neq l} W_{ l p } G^{(l)}_{ p k}  + O_{\prec}\Big(\frac{1 }{N\eta}\Big)
    ,
  \end{equation}
  where for any $r\in \NN$, any (random) $\phi>0$ and  (random) $r\times r$ matrix $\Psi$ we write $\Psi = O_{\prec}(\phi)$ if $\| \Psi \| \prec \phi$.
  Similarly, by putting $M$ on the right-hand side in \eqref{eq:98}, we get that for $k\neq l$
  \begin{equation}
    \label{eq:100}
    G_{kl}
    =
    - \sum_{p \neq l} G^{(l)}_{k p } W_{ p l }  M  + O_{\prec}\Big(\frac{1}{N\eta}\Big)
    .
  \end{equation}
  By the local law \eqref{eq:32} we also see that
  \begin{equation}
    \label{eq:101}
    M\frac{1}{G_{ll}}G_{lk}
    =
    O_{\prec}\Big(\frac{1}{\sqrt{ N\eta}} \Big)
    ,\quad
    G_{kl}\frac{1}{G_{ll}} M
    =
    O_{\prec}\Big(\frac{1}{\sqrt{N\eta}} \Big)
  \end{equation}
  holds for $l\neq k$.
  Denoting $\etaH := \min\{|\Im z|, |\Im \zeta|\}$ with $N^{-\gamma - \tau} \lesssim \etaH \lesssim N^{-\gamma}$ on $\Omega \times \Omega$,  we conclude that for $l\neq k$ the identity
  \begin{align}
    \label{eq:102}
    G_{lk}(z) B G_{kl}(\zeta)
    &=
      M(z) \sum_{p \neq l} W_{ l p } G^{(l)}_{ p k}(z)   B \sum_{q \neq l} G^{(l)}_{kq }(\zeta)  W_{q  l}  M(\zeta)  + O_{\prec}\Big(\frac{1 }{(N\etaH)^{\,3/2}}\Big)
  \end{align}
  holds uniformly on $(z,\zeta)\in \Omega \times \Omega$.
  By construction, $G^{(l)}_{pk}$ and $G^{(l)}_{kq}$ are independent of $W_{lp}$ and $W_{ql}$.
  After taking the partial  expectation $\EE_{l}$ with respect to $\{W_{lp}, W_{ql} \, : \, 1\leq p,q \leq N\}$ (denoted below by $\EE_{l}$) we rewrite \eqref{eq:102} as
  \begin{align}
    \label{eq:103}
    G_{lk}(z) B G_{kl}(\zeta)
    &=
      M(z) \Gamma \Big[\frac{1}{N}  \sum_{p \neq l} G^{(l)}_{ p k}(z)   B  G^{(l)}_{kp }(\zeta)  \Big] M(\zeta) 
    \\ \label{eq:104}
    &\qquad
      +
      M(z) (1-\EE_{l})\Big[ G_{lk}(z) B G_{kl}(\zeta) \Big]  M(\zeta) 
      + O_{\prec}\Big(\frac{1 }{(N\etaH)^{\,3/2}}\Big)
      .
  \end{align}
  In \eqref{eq:103} the (linear) operator $\Gamma$ appears as a result of the structure of $W_{ij}$  (see \eqref{eq:93}) after applying $\EE_{l}$ and using that for the complex i.i.d. matrices $\Xb_{\alpha} = (x_{pq}^{(\alpha)})_{1\leq p,q \leq N}$ 
  \begin{equation} 
    \label{eq:105}
    \EE[x_{kp}^{(\alpha_1)}x_{ql}^{\,(\alpha_2)}]
    =
    0
    ,\quad
    \EE[x_{kp}^{(\alpha_1)}\overline{x}_{ql}^{\,(\alpha_2)}]
    =
    \delta_{\alpha_1 \alpha_2}\delta_{kq} \delta_{pl} \frac{1}{N}
    .
  \end{equation}
  From the standard identity (a consequence of the Woodbury formula)
  \begin{equation}
    \label{eq:106}
    G_{pk}^{(l)}
    =
    G_{pk} - G_{pl}\frac{1}{G_{ll}} G_{lk}
  \end{equation}
  holding for all $l\notin \{p,k\}$, together with \eqref{eq:32}, we deduce that
  \begin{equation}
    \label{eq:107}
    G_{pk}^{(l)}
    =
    G_{pk} + O_{\prec}\Big( \frac{1}{N\eta }\Big)
    .
  \end{equation}
  This implies that
  \begin{equation}
    \label{eq:108}
    G^{(l)}_{ p k}(z)   B  G^{(l)}_{kp }(\zeta)
    =
    G_{ p k}(z)  B  G_{kp }(\zeta) +
    \left\{
      \renewcommand\arraystretch{1.5}
      \begin{array}{ll} 
        O_{\prec}\big( (N\etaH)^{\,-1} \big),  & p = k,
        \\
        O_{\prec}\big( (N\etaH)^{\,-3/2} \big),  & p \neq k.
      \end{array}\right.    
  \end{equation}
  Recall that we are dealing with the case $k\neq l$, and  $\etaH \gg N^{-1} $ for $(z,\zeta)\in \Omega\times \Omega$.
  We replace the $G^{(l)}$ entries in \eqref{eq:103} using formula \eqref{eq:108} to get 
  \begin{align*}
    \frac{1}{N}  \sum_{p \neq l} G^{(l)}_{ p k}(z)   B  G^{(l)}_{k p }(\zeta)
    & =
      \frac{1}{N}  \sum_{p =1}^{N} G_{ p k}(z)   B  G_{k p }(\zeta)
      +
      O_{\prec}\Big( \frac{1}{(N\etaH)^{\,3/2}}\Big)
      .
  \end{align*}
  The boundedness of $M$ and $\Gamma$ (as an operator acting on $\CC^{n\times n}$) implies that 
  \begin{align*}
    G_{lk}(z) B G_{kl}(\zeta)
    &=
      \frac{1}{N}   M(z) \Gamma \Big[\sum_{p =1}^{N} G_{ p k}(z)   B  G_{kp }(\zeta)  \Big] M(\zeta) 
    \\
    &\qquad
      +
      M(z) (1-\EE_{l})\Big[ G_{lk}(z) B G_{kl}(\zeta) \Big] M(\zeta)
      +
      O_{\prec}\Big(\frac{1 }{(N\etaH)^{\,3/2}}\Big)
  \end{align*}
  holds for  $k\neq l$.
  By the local law \eqref{eq:32}, $\sum_{p =1}^{N} G_{ p k}(z)   B  G_{kp }(\zeta) = O_{\prec}(1 + \etaH^{\,-1})$ for any $k\in \{1,\ldots, N\}$.
  Therefore, summing the above equality over $N-1$ indices $l$ for $l\neq k$ gives 
  \begin{align}
    \label{eq:111}
    &\sum_{ l:\,l\neq k } G_{lk}(z) B G_{kl}(\zeta)
      =
      M(z)  \Gamma \Big[ \sum_{p =1}^{N} G_{ p k}(z)   B  G_{kp }(\zeta)  \Big] M(\zeta) 
    \\  \nonumber
    &\qquad\qquad\qquad
      +
      M(z) \sum_{l:\,l\neq k}(1-\EE_{l}) \Big[ G_{lk}(z) B G_{kl}(\zeta) \Big] M(\zeta)
      + O_{\prec}\Big(\frac{1 }{N^{1/2}\etaH^{\,3/2}}\Big)
  \end{align}
  where we used that $N^{-1} + (N \etaH)^{-1} \ll N^{-1/2}\etaH^{\,-3/2}$ for $(z,\zeta) \in \Omega \times \Omega$.
  Finally, in order to obtain the first term on the right hand side of \eqref{eq:90}, we use the local law \eqref{eq:32} and apply it to the diagonal terms $G_{kk}$
  \begin{equation}
    \label{eq:112}
    G_{kk}(z) B G_{kk}(\zeta)
    =
    M(z) B M(\zeta) + O_{\prec}\Big(\frac{1}{\sqrt{N \etaH}} \Big)
    ,
  \end{equation}
  which together with \eqref{eq:111} gives 
  \begin{align}
    \nonumber
    \sum_{l=1}^{N}G_{lk}(z) B G_{kl}(\zeta)
    &=
      M(z) B M(\zeta)+M(z) \Gamma \Big[  \sum_{p =1}^{N} G_{ p k}(z)   B  G_{k p }(\zeta)  \Big] M(\zeta) 
    \\ \label{eq:114}
    &\qquad
      +  M(z) \sum_{l\neq k}(1-\EE_{l})\Big[ G_{lk}(z) B G_{kl}(\zeta) \Big] M(\zeta)
      + O_{\prec}\Big(\frac{1}{N^{1/2}\etaH^{\,3/2}}\Big)
      .
  \end{align}
  It is left to control
  $ D_k:= \sum_{l\neq k} (1-\EE_{l})\big[  G_{lk}(z)   B  G_{k l }(\zeta)   \big] $.
  For this we use the  decomposition
  \begin{align}
    \label{eq:115}
    D_k D_k^{*}
    & =
      \sum_{l:\,l\neq k} (1-\EE_{l})\Big[  G_{lk}(z)   B  G_{k l }(\zeta)\Big]  \bigg((1-\EE_{l})\Big[  G_{lk}(z)   B  G_{k l }(\zeta)   \Big]\bigg)^*
    \\ \label{eq:116}
    & \quad
      +
      \sum_{\substack{l, j: \, l,j \neq k,\\  l\neq j}} (1-\EE_{l})\Big[  G_{lk}(z)   B  G_{k l }(\zeta)   \Big] \bigg(  (1-\EE_{j})\Big[  G_{jk}(z)   B  G_{k j }(\zeta)   \Big]\bigg)^*
    .
  \end{align}
  By \eqref{eq:32} the first term is of order $O_{\prec}(N^{-1}\etaH^{\,-2})$.
  For the second term we use \eqref{eq:106} to see
  \begin{align*} 
    &(1-\EE_{l})\Big[  G_{lk}(z)   B  G_{k l }(\zeta)   \Big] \bigg(  (1-\EE_{j})\Big[  G_{jk}(z)   B  G_{k j }(\zeta)   \Big]\bigg)^*
    \\
    &
      =
      (1-\EE_{l})\Big[  \Big(G_{lk}^{(j)}(z) + G_{lj}(z)\frac{1}{G_{jj}(z)}G_{jk}(z) \Big)   B  \Big(G_{kl}^{(j)}(\zeta) + G_{kj}(\zeta)\frac{1}{G_{jj}(\zeta)}G_{jl}(\zeta) \Big) \Big]
    \\
    &\quad
      \times \bigg(  (1-\EE_{j})\Big[  \Big(G_{jk}^{(l)}(z) + G_{jl}(z)\frac{1}{G_{ll}(z)}G_{lk}(z)   \Big)B  \Big(G_{k j }^{(l)}(\zeta) + G_{kl}(\zeta)\frac{1}{G_{ll}(\zeta)}G_{lj}(\zeta) \Big) \Big]\bigg)^*
      .
  \end{align*}
The expectation of the summands in \eqref{eq:116} vanishes, i.e.,
  \begin{equation*}
    \EE\Big[  (1-\EE_{l})\big[  G_{lk}^{(j)}(z)  B  G_{kl}^{(j)}(\zeta) \big] (1-\EE_{j})\big[  G_{jk}^{(l)}(z) B G_{k j }^{(l)}(\zeta)  \big]\Big]
    =
    0
    .
  \end{equation*}
  Therefore, the expectation of $D_kD_k^*$ can be written as a sum of at most $O(N^2)$ non-zero terms, each containing a product of at least 6 off-diagonal coordinate matrices of $\Gb$ or $\Gb^{(l)}$.
  By the local law \eqref{eq:32} each such product is of order $O_{\prec}((N\etaH)^{\,-3})$.
  All the terms in \eqref{eq:115} are also deterministically bounded by a sufficiently high power of $1/\etaH$, which, in particular, means that $\|D_k D_k^*\| \leq N^{C}$ for some $C>0$.
  Combining this with the stochastic domination estimate $D_k D_k^* \prec N^{-1} \etaH^{\, -3}$ (defined in \eqref{eq:31}), we obtain an estimate for the expectation $\EE[D_k D_k^{*}] = O_{\prec}(N^{-1} \etaH^{\,-3})$.
  Notice, that $\EE[\|D_k\|^2_{\mathrm{HS}}]=\EE[\Tr(D_k D_k^*)/n] = O_{\prec}(N^{-1} \etaH^{\,-3})$, where $\|\cdot \|_{\mathrm{HS}}$ is the Hilbert-Schmidt norm on $\CC^{n\times n}$ induced by the scalar product \eqref{eq:51}.
  
  The functions $M$, $G_{ij}$, $M^{-1}$, $G_{ii}^{-1}$ used in the above proof are analytic on $\Omega$.
  By collecting the error terms in \eqref{eq:114} ($O_{\prec}(N^{-1/2}\etaH^{\,-3/2})$ and $D_k$), taking the average over $k\in \{1,\ldots, N\}$, denoting the resulting error by $\errE^{B}(z,\zeta)$  and using the equivalence of norms on $\CC^{n\times n}$, we obtain \eqref{eq:91}, which finishes the proof.
\end{proof}

\subsection{Self-consistent equation for $G^{B}(z,\zeta)$}
\label{sec:proof-4}
In this section we study the limiting behavior of $G^{B}(z,\zeta)$ introduced in \eqref{eq:89} through the analysis of the self-consistent equation \eqref{eq:90}.
More precisely, for any $z,\zeta \in \Omega$ and $B \in \CC^{n\times n}$ we consider the equation
\begin{equation}
  \label{eq:119}
  M^{B}(z,\zeta)
  =
  M(z) B M(\zeta)  + M(z) \Gamma \big[M^{B}(z,\zeta)\big] M(\zeta)
\end{equation}
for $M^{B}(z,\zeta)$.
For any $Q_1,Q_2\in\CC^{n\times n}$ we denote by $\Cc_{Q_1,Q_2}$ an operator on $\CC^{n\times n}$ given by
\begin{equation}
  \label{eq:120}
  \Cc_{Q_1,Q_2}[R]
  :=
  Q_1 R Q_2
\end{equation}
for all $R\in\CC^{n\times n}$.
The operator $\Cc_{M(z),M(\zeta)}$ is invertible, and $\Cc_{M(z),M(\zeta)}^{-1} = \Cc_{\frac{1}{M(z)},\frac{1}{M(\zeta)}}$.
With this notation, equation \eqref{eq:119} reads
\begin{equation}
  \label{eq:121}
  \Bc_{z,\zeta}\big[M^{B}(z,\zeta) \big]
  =
  B
  ,
\end{equation}
where we defined $\Bc_{z,\zeta}:=\Cc_{M(z),M(\zeta)}^{-1} - \Gamma$.

If the operator $\Bc_{z,\zeta}$ is invertible, then $M^{B}(z,\zeta)$ is uniquely determined from \eqref{eq:121}.
Here it is sufficient to restrict the parameters $z$ and $\zeta$ to a small neighborhood of $E_0$.
Therefore, we show that $\Bc_{z,\zeta}$ is invertible in a sufficiently small region around the point $(E_0,E_0)\in \RR\times \RR$.
The bound for $\| \Bc_{z,\zeta}^{-1}\|$ depends on whether $z$ and $\zeta$ belong to the same half-plane or not.
If $z$ and $\zeta$ are in the same half-plane, then the $\Bc_{z,\zeta}^{-1}$ is bounded around $(E_{0},E_{0})$, while for $z$ and $\zeta$ in different half-planes the main contribution to $\Bc_{z,\zeta}^{-1}$ stems from its smallest isolated eigenvalue of $\Bc_{z,\zeta}$.
The latter is analyzed using analytic perturbation theory.
Similar analysis has also been performed to establish the mesoscopic spectral CLT for Wigner-type matrices in \cite{Riab}.
Denote $\Omega^{+}:=\Omega \cap \CC_{+}$ and $\Omega^{-}:=\Omega \cap \CC_{-}$.
\begin{lem}[Invertibility of $\Bc_{z,\zeta}$]
  \label{lem:4}
  Let $\gamma \in (0,1)$ and $\tau \in (0,\gamma/2)$.
  Then there exists $C>0$ such that the following holds
  \begin{itemize}
  \item[(i)] Uniformly for $(z,\zeta) \in \big(\Omega^{+}\times \Omega^{+} \big)\cup\big(\Omega^{-}\times \Omega^{-} \big)$
    \begin{equation}
      \label{eq:122}
      \big\|\,\Bc_{z,\zeta}^{\,-1}\,\big\|
      \leq
      C
      ;
    \end{equation}
  \item[(ii)] Uniformly for $(z,\zeta) \in \big(\Omega^{-}\times \Omega^{+} \big)\cup\big(\Omega^{+}\times \Omega^{-} \big)$
    \begin{equation}
      \label{eq:123}
      \big\|\,\Bc_{z,\zeta}^{\,-1}\,\big\|
      \leq
      \frac{C}{|z - \zeta|}
      ,
    \end{equation}
    and the operator $\Bc_{z,\zeta}^{\,-1}$ admits the decomposition
    \begin{equation}
      \label{eq:124}
      \Bc_{z,\zeta}^{\,-1}
      =
      \vartheta \,\frac{2\imu }{\la \Im M(E_{0}) \ra} \frac{1}{z - \zeta}  \Im M(E_{0}) \la \Im M(E_{0}),\, \cdot \, \ra + \Jc_{z,\zeta}
      ,
    \end{equation}
    where $\vartheta = 1$ if $(z,\zeta)\in \Omega^{+}\times \Omega^{-}$, $\vartheta = -1$ if  $(z,\zeta)\in \Omega^{-}\times \Omega^{+}$, and  $\|\Jc_{z,\zeta}\|$ is uniformly bounded for $(z,\zeta) \in \big(\Omega^{-}\times \Omega^{+} \big)\cup\big(\Omega^{+}\times \Omega^{-} \big)$.
  \end{itemize}
\end{lem}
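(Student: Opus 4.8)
The plan is to study the operator $\Bc_{z,\zeta} = \Cc_{M(z),M(\zeta)}^{-1} - \Gamma$ via its relation to the stability operator $\Cc_{M(z)}\Bc_z$ of the one-point MDE, whose inverse is known to be bounded near $E_0$ by \eqref{eq:49}. First I would consider the self-adjoint (with respect to the scalar product \eqref{eq:51}) reformulation: define the "flattened" operator $\widehat{\Bc}_{z,\zeta} := \Cc_{|M(z)|^{-1},|M(\zeta)|^{-1}}^{1/2}$-type conjugation that symmetrizes $\Bc_{z,\zeta}$, using the spectral decomposition $M(z) = |M(z)|^{1/2} U(z) |M(z)|^{1/2}$ or, more directly, exploit that at $z = \zeta = E_0$ (on the real axis in the bulk) $M(E_0)$ has strictly positive imaginary part, so $\Gamma$ composed with the relevant conjugations is a bona fide positivity-preserving operator and Perron--Frobenius / Krein--Rutman theory applies. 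The key structural fact is that $\Bc_{E_0,E_0} = \Cc_{M(E_0)}^{-1} - \Gamma$ has a one-dimensional kernel: this follows because $\Im M(E_0)$ solves the homogeneous equation, i.e. $\Bc_{E_0,E_0}[\Im M(E_0)] = 0$, which one checks by taking imaginary parts in the MDE \eqref{eq:7} at $z = E_0 + \imu 0$. The $L$-flatness assumption \textbf{(A)}, through Proposition~\ref{pr:1}, guarantees that this zero eigenvalue is simple and isolated with a spectral gap of order $1$.

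For part (i), when $z,\zeta$ lie in the same half-plane, the plan is a continuity/perturbation argument: $\Bc_{z,\zeta}$ is a small ($O(|z-E_0| + |\zeta - E_0|) = O(\delta) = O(N^{-\gamma})$) perturbation of $\Bc_{E_0,E_0}$, but now the would-be kernel direction is lifted. Concretely, I would compute the eigenvalue $\beta(z,\zeta)$ of $\Bc_{z,\zeta}$ that emanates from the zero eigenvalue of $\Bc_{E_0,E_0}$ by first-order analytic perturbation theory, obtaining $\beta(z,\zeta) \approx \langle \Im M(E_0), \partial\Bc[\Im M(E_0)]\rangle / \langle \Im M(E_0)\rangle$ where the derivative picks up $\partial_z \Cc_{M(z)}^{-1}$ terms; the crucial sign computation shows this is $\asymp (z - \zeta)$ up to a nonzero factor involving $\langle \Im M(E_0)\rangle$, and that the $\Im z, \Im \zeta$ contributions have the \emph{same} sign when $z,\zeta$ are in the same half-plane so no cancellation occurs, giving $|\beta(z,\zeta)| \gtrsim \Im z + \Im\zeta \gtrsim N^{-\gamma - \tau} > 0$ — but in fact one gets a better bound: the real parts contribute too, and on the same-half-plane regime the eigenvalue stays bounded away from zero by a constant because the relevant quadratic form is definite. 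The remaining (complementary) part of the spectrum of $\Bc_{z,\zeta}$ stays in the gap by continuity, yielding the uniform bound \eqref{eq:122}.

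For part (ii), with $z,\zeta$ in opposite half-planes, the same first-order perturbation computation for the distinguished eigenvalue $\beta(z,\zeta)$ now yields the leading behavior $\beta(z,\zeta) = c \,(z-\zeta)(1 + o(1))$ with an \emph{explicitly computable} nonzero constant $c$ — here the opposite signs of $\Im z$ and $\Im \zeta$ mean $z - \zeta$ has imaginary part of definite sign and magnitude $\asymp \eta$, so $|\beta(z,\zeta)| \asymp |z-\zeta|$, giving \eqref{eq:123}; and the spectral projection onto the associated eigenvector, which to leading order is $\Im M(E_0)$ (suitably normalized), produces precisely the rank-one term in \eqref{eq:124}. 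I would carry this out by writing $\Bc_{z,\zeta}^{-1} = \beta(z,\zeta)^{-1} P_{z,\zeta} + (\text{regular part})$ via the Riesz projection, identifying $P_{z,\zeta} \to \Im M(E_0)\langle \Im M(E_0), \cdot\rangle/\langle \Im M(E_0)\rangle$, and matching the constant $2\imu/\langle \Im M(E_0)\rangle$ by the explicit first-order expansion of $\beta$, where the factor $2\imu$ comes from $\partial_z M - \partial_\zeta M$ specialized appropriately and the sign $\vartheta$ tracks which of $z,\zeta$ is in the upper half-plane. The main obstacle I anticipate is the sign/definiteness bookkeeping in the perturbative eigenvalue expansion — establishing that the distinguished eigenvalue of $\Bc_{z,\zeta}$ does not vanish (or come anomalously close to zero) for $(z,\zeta)$ in the same half-plane, and pinning down the exact constant and sign in \eqref{eq:124} for the opposite-half-plane case; this requires carefully using the Herglotz structure of $M$ (so that $\Im M > 0$, $\partial_z M$ has a definite imaginary part) together with the spectral gap from $L$-flatness, rather than any soft argument. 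A secondary technical point is ensuring all estimates are uniform down to $|\Im z| \gtrsim N^{-\gamma-\tau}$, which forces $\tau < \gamma/2$ so that the perturbation $\delta = N^{-\gamma}$ remains much smaller than the relevant spectral scales; this is exactly the hypothesis imposed in the statement.
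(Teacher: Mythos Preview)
Your high-level plan for part (ii) --- analytic perturbation theory around a base operator with a simple zero eigenvalue, extracting the rank-one singular part via the Riesz projection --- is exactly what the paper does. But you have misidentified the base operator, and this error contaminates your treatment of part (i).

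You write that $\Bc_{E_0,E_0} = \Cc_{M(E_0)}^{-1} - \Gamma$ has $\Im M(E_0)$ in its kernel. This is false: taking imaginary parts of the MDE at $E_0$ yields $\frac{1}{M_0^*}(\Im M_0)\frac{1}{M_0} = \Gamma[\Im M_0]$, i.e.\ $(\Cc_{M_0^*,M_0}^{-1} - \Gamma)[\Im M_0] = 0$. The singular operator is $\Cc_{M_0^*,M_0}^{-1} - \Gamma$, not $\Cc_{M_0,M_0}^{-1} - \Gamma = \Bc_{E_0}$; the latter is precisely the one-point stability operator, which is \emph{invertible} by \eqref{eq:49}. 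Because $M(z)$ has a jump across the real axis in the bulk, the limit of $\Bc_{z,\zeta}$ as $(z,\zeta)\to(E_0,E_0)$ depends on which half-planes $z$ and $\zeta$ approach from.

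This makes part (i) much simpler than you suggest. When $z,\zeta$ lie in the same half-plane (say both in $\Omega^+$), one writes $\Bc_{z,\zeta} = \Bc_z + \Cc_{M(z)^{-1},\, M(\zeta)^{-1} - M(z)^{-1}}$, where the second term is $O(|z-\zeta|)=O(\eta_0)$ by analyticity of $M^{-1}$ on $\CC_+$. Since $\|\Bc_z^{-1}\|$ is uniformly bounded by \eqref{eq:49}, a Neumann series gives \eqref{eq:122} directly --- no eigenvalue tracking, and no appeal to a ``definite quadratic form'' (which would be delicate anyway, since $\Bc_z$ is not self-adjoint).

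For part (ii), with the correct base point $\Cc_{M_0^*,M_0}^{-1}-\Gamma$, your strategy is right. The paper uses the balanced polar decomposition $M_0 = Q^*UQ$ (unitary $U$) to reduce simplicity of the kernel to a Perron--Frobenius statement for the symmetrized operator $\Cc_{Q,Q^*}\Gamma\,\Cc_{Q^*,Q}$, and checks algebraic simplicity separately. The step you gloss over is the eigenvalue computation: one finds $\lambda_{z,\zeta} = -\overline{\alpha}(z-E_0) - \alpha(\zeta-E_0) + O(\eta_0^2)$ with $\alpha = \big\langle L,\frac{1}{M_0^*}L\frac{1}{M_0}M_0'\frac{1}{M_0}\big\rangle$ for $L=\Im M_0/\|\Im M_0\|_{\mathrm{HS}}$, and a nontrivial manipulation using the differentiated MDE together with the imaginary-part relation shows $\alpha = \frac{\imu}{2}\langle\Im M_0\rangle/\|\Im M_0\|_{\mathrm{HS}}^2$ is purely imaginary. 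This is what produces both the $(z-\zeta)^{-1}$ and the constant $2\imu/\langle\Im M_0\rangle$ in \eqref{eq:124}; it does not follow from a generic Herglotz-sign argument. The constraint $\tau<\gamma/2$ is needed precisely so that the $O(\eta_0^2)$ remainder in $\lambda_{z,\zeta}$ is dominated by the leading term $|z-\zeta|\gtrsim N^{-\gamma-\tau}$.
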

\begin{proof}
  Consider first the case (i), and assume that both $z$ and $\zeta$ are in the upper half-plane, i.e.,  $z,\zeta \in \Omega^{+}$.
  The case $z,\zeta \in \Omega^{-}$ follows analogously.
  For $z,\zeta \in \Omega^{+}$ we write
  \begin{align}
    \label{eq:125}
    \Bc_{z,\zeta}
    =
    \Cc^{-1}_{M(z),M(\zeta)}-\Gamma
    =
    \Bc_{z} +\Cc^{-1}_{M(z),M(\zeta)} - \Cc^{-1}_{M(z),M(z)}
    ,
  \end{align}
  where $\Bc_{z}$ was introduced in \eqref{eq:48}.
  By the definition \eqref{eq:120}, we have that
  \begin{equation}
    \label{eq:126}
    \Cc^{-1}_{M(z),M(\zeta)} - \Cc^{-1}_{M(z),M(z)}
    =
    \Cc_{\frac{1}{M(z)},\frac{1}{M(\zeta)}-\frac{1}{M(z)}}
    =
    O(|z-\zeta|)
  \end{equation}
  from the analyticity of $M(z)^{-1}$ on $\CC_{+}$.
  Therefore, using the boundedness of $\Bc_{z}^{-1}$ from \eqref{eq:49}, we get \eqref{eq:122}.
  
  Now suppose that $z$ and $\zeta$ are in different half-planes, and consider first the case  $z\in \CC_{-}$ and $\zeta \in \CC_{+}$.
  Denote $M_0 :=\lim_{y\downarrow  0} M(E_0 + \imu y)$ for brevity.
  From part (ii) of Proposition~\ref{pr:1} we have that $\Im M(z) \sim \rho(z) I_n$ uniformly on $\CC_{+}$, which together with $\rho(E_0) \sim 1$ implies
    \begin{equation}
    \label{eq:127}
    \Im M_{0}
    \sim
    I_n
    .
  \end{equation}
  We see that $\lim_{y\to 0} M(E_{0}+\imu y) - M(E_{0}-\imu y) = 2 \imu \Im M_{0}$ is a non-zero matrix, and thus $\|\Cc_{\frac{1}{M(z)},\frac{1}{M(\zeta)}-\frac{1}{M(z)}}\| = O(1)$.
  Therefore, the perturbation argument used in \eqref{eq:125} and \eqref{eq:126} to control $\Bc_{z,\zeta}$ through comparison with $\Bc_{z}$ and the analyticity of $M(z)$ in the upper (lower) half-plane is not applicable anymore.

  Instead, we consider the perturbation of $\Bc_{z,\zeta}$ around the operator  $\Bc_{E_{0},E_{0}}$ given by
  \begin{equation*}
    \Bc_{E_{0},E_{0}}
    :=
    \Cc_{M_{0}^*,M_{0}}^{-1} - \Gamma
    .
  \end{equation*}
  For convenience, define the centered variables $w:=z - E_{0} \in \CC_{-}$, $\xi:= \zeta - E_{0} \in \CC_{+}$, so that
  \begin{equation*}
    \Bc_{z,\zeta}
    =
    \Bc_{E_0+w, E_0 + \xi}
    =
    \Bc_{E_0, E_0 }
    +
    \Cc_{M(E_0+w), M(E_0 + \xi)}^{-1} - \Cc_{M_{0}^*,M_{0}}^{-1}
    .
  \end{equation*}
  For $(z,\zeta) \in \Omega^{-}\times \Omega^{+}$ the variables $w$ and $\xi$ remain in their corresponding half-planes, the perturbation $\Cc_{M(E_0+w), M(E_0 + \xi)}^{-1} - \Cc_{(M(E_{0}))^*,M(E_{0})}^{-1}$ is an analytic function in $w$ and $\xi$, and thus we can apply the analytic perturbation theory to control the invertibility of $\Bc_{z,\zeta}$ on $\Omega^{-}\times \Omega^{+}$.
  We start by collecting the necessary properties of the spectrum of $\Bc_{E_0,E_0}$.

  Firstly, by taking the imaginary part of the MDE \eqref{eq:7} at $z = E_{0} \in \RR$ we find that
  \begin{equation}
    \label{eq:130}
    \Bc_{E_{0},E_{0}}\big[ \Im M_{0}\big]
    =
    0
    ,\quad
    \Bc_{E_{0},E_{0}}^{*}\big[ \Im M_{0}\big]
    =
    0
    ,
  \end{equation}
  where we used that the adjoint operator to $\Bc_{E_{0},E_{0}}$ with respect to the scalar product \eqref{eq:51} is
  \begin{equation*}
    \Bc_{E_{0},E_{0}}^{*}
    =
    \Cc_{M_{0},M_{0}^{*}}^{-1} - \Gamma
    .
  \end{equation*}
  This means that $\Bc_{E_{0},E_{0}}$ is not invertible, and the kernels of both $\Bc_{E_{0},E_{0}}$ and $\Bc_{E_{0},E_{0}}^{*}$ contain the eigenvector $\Im M_{0}$.
  We now show that
  \begin{equation}
    \label{eq:132}
    \mathrm{dim}\big(\mathrm{ker}\big(\Bc_{E_{0},E_{0}}\big)\big)
    =
    \mathrm{dim}\big(\mathrm{ker}\big(\Bc_{E_{0},E_{0}}^{*}\big)\big)
    =
    1
    .
  \end{equation}
  To this end, we use the balanced polar decomposition of $M_{0}$ from \cite[Eq.~(3.1)]{AltErdoKrug20}
  \begin{equation}
    \label{eq:133}
    M_{0}
    =
    Q^{*} U Q
  \end{equation}
  with unitary $U\in \CC^{n\times n}$ and invertible $Q\in \CC^{n\times n}$.
  Notice that the decomposition \eqref{eq:133} is well defined since $\Im M_{0}$ is (strictly) positive definite as shown in \eqref{eq:127}.
  The operator $\Bc_{E_{0},E_{0}}$ can be written in terms of $U$ and $Q$ as
  \begin{equation}
    \label{eq:134}
    \Bc_{E_{0},E_{0}}
    =
    \Cc_{Q,Q^*}^{-1}\Cc_{U^*,U}^{-1}\Cc_{Q^*,Q}^{-1} - \Gamma
    =
    \Cc_{Q,Q^*}^{-1}\Big(\Cc_{U^*,U}^{-1} - \Fc\Big)\Cc_{Q^*,Q}^{-1}
    ,
  \end{equation}
  where $\Fc := \Cc_{Q,Q^*}\Gamma \,\Cc_{Q^*,Q}$  is a self-adjoint and positivity preserving operator.
  Similarly, we can rewrite the MDE \eqref{eq:7} in terms of $U$, $Q$ and $\Fc$ as
  \begin{equation*}
    -\frac{1}{U}
    =
    E_{0} Q Q^{*} - Q K_0 Q^{*} + \Fc\big[U\big]
    .
  \end{equation*}
  Notice that by \eqref{eq:127} and \eqref{eq:133} we have that $\Im U \geq c I_n$ for some $c>0$.
  Since $U$ is unitary, the above equation yields
  \begin{equation}
    \label{eq:136}
    \Im U
    =
    \Fc \big[ \Im U\big]
    .
  \end{equation}
  Let $\|\Fc\|_{2}$ denote the operator norm of $\Fc$ induced by the Hilbert-Schmidt norm on $\CC^{n\times n}$ with the scalar product \eqref{eq:51}.
  It follows from \eqref{eq:136} and the properties \eqref{eq:318} and \eqref{eq:320} of $\Fc$ discussed in Appendix~\ref{sec:lflat} that $\|\Fc\|_{2} = 1$ is a simple eigenvalue of $\Fc$, and the operator $\Fc$ has a spectral gap
  \begin{equation}
    \label{eq:137}
    \mathrm{Spec}(\Fc)
    \subset
    [-1+\kappa,1-\kappa]\cup\{1\}
  \end{equation}
  for some $\kappa>0$ sufficiently small.
  Denote by $F:=\Im U / \|\Im U\|_{\mathrm{HS}}$ the normalized eigenvector of $\Fc$ corresponding to the eigenvalue $\|\Fc\|_2 = 1$.
  Since $F$ commutes with $U$ and $U^{*}$, and $\Fc$ is self-adjoint, we get that 
  \begin{equation*}
    \Big(\Cc_{U^*,U}^{-1} - \Fc\Big)\big[F\big]
    =
    0
    ,\qquad
    \Big(\Cc_{U^*,U}^{-1} - \Fc\Big)^{*}\big[F\big]
    =
    0
    .
  \end{equation*}
  Moreover, if $Y\in \CC^{n\times n}$ is such that $\la Y , F \ra = 0$, then \eqref{eq:137} and the fact that $\|\Fc\|_{2}=1$ is a simple eigenvalue imply that
  \begin{equation*}
    \Big\|\Big(\Cc_{U^*,U}^{-1} - \Fc\Big)\big[Y\big]\Big\|_{\mathrm{HS}}
    \geq
    \big\|U Y U^*\big\|_{\mathrm{HS}} - \big\|\Fc\big[Y\big]\big\|_{\mathrm{HS}}
    \geq
    \kappa \|Y\|_{\mathrm{HS}}
    .
  \end{equation*}
  We conclude that $\mathrm{dim} \big( \mathrm{ker}\big(\Cc_{U^*,U}^{-1} - \Fc\big)\big) = 1$, which together with \eqref{eq:134} and the invertibility of $Q$ implies \eqref{eq:132}.

  From \eqref{eq:130} and \eqref{eq:132} we know that $\Bc_{E_{0}, E_{0}}$ has eigenvalue  $0$ with geometric multiplicity $1$, and the corresponding left and right eigenvectors coincide and are equal to $\Im M_0$.
  If we assume that the algebraic multiplicity of the eigenvalue at $0$ is greater than $1$, then the corresponding Jordan chain contains a generalized eigenvector $T \in \CC^{n\times n}$ such that $\Bc_{E_{0}, E_{0}}[T] = \Im M_0$.
  This implies that
  \begin{equation*}
    \la \Im M_0, \Im M_0 \ra
    =
    \la \Im M_0, \Bc_{E_{0}, E_{0}}[T] \ra
    =
    \la \Bc_{E_{0}, E_{0}}^{*}[\Im M_0], T \ra
    =
    0
    ,
  \end{equation*}
  which contradicts to $\|\Im M_{0}\|_{\mathrm{HS}} \geq \theta$.
  We, therefore, conclude that the algebraic multiplicity of the eigenvalue $0$ of $\Bc_{E_{0},E_{0}}$ is also equal to $1$.

  Since zero is a simple eigenvalue, we can apply the analytic perturbation theory of non-Hermitian operators to control the spectrum of $\Bc_{z,\zeta}$.
  At the same time, the dimension of the space $\CC^{n\times n}$ is a fixed model parameter independent of $N$.
  Therefore, we can find a sufficiently small $\varepsilon>0$ such that $\mathrm{Spec}(\Bc_{E_{0},E_{0}})\cap \{v\in \CC \, : \, |v|< \varepsilon\} = \{0\}$ and
  \begin{equation*}
    \sup_{|v|=\varepsilon} \Big(\big\|\big(\Bc_{E_{0},E_{0}}-v \Id\big)^{-1}\big\| + \big\|\big(\Bc_{E_{0},E_{0}}^{*}-v \Id\big)^{-1}\big\|\Big)
    \lesssim
    1
    .
  \end{equation*}

  Recall that the function $M(z)$ defined in \eqref{eq:13} has a jump at $z = E_0 \in \RR$.
  In order to apply the analytic perturbation theory, we will restrict $M(z)$ to the set $\CC_{+}$ or $\CC_{-}$ depending on whether $z \in \Omega^{+}$ or $z\in \Omega^{-}$, and then use part (v) of Proposition~\ref{pr:1} to extend it analytically to a neighborhood of $E_0 \in \RR$ containing $\Omega$.
  Thus, for sufficiently small $\varepsilon>0$ we define $M_{+}: \CC_{+}\cup \{z\, : \, |z - E_0|< \varepsilon\} \to \CC^{n\times n}$ such that $M_{+}(z) = M(z)$ on $\CC_{+}$ and $M_{+}$ is analytic on $\CC_{+}\cup \{z\, : \, |z - E_0|< \varepsilon\}$.
  Similarly, we define $M_{-}: \CC_{-}\cup \{z\, : \, |z - E_0|< \varepsilon\} \to \CC^{n\times n}$ such that $M_{-}(z) = M(z)$ on $\CC_{-}$ and $M_{-}$ is analytic on $\CC_{-}\cup \{z\, : \, |z - E_0|< \varepsilon\}$.
  In particular, if we denote $M_0:= \lim_{y\downarrow 0} M(E_0 + \imu y)$, then $M_{+}(E_0) = M_0$ and $M_{-}(E_0) = M_0^{*}$.

  Consider the (analytic) perturbation of $\Bc_{E_{0},E_{0}}$ by $\Dc_{w,\xi}$, where
  \begin{equation*}
    \Dc_{w,\xi}
    :=
    \Cc_{M_{+}(E_0+w), M_{-}(E_0 + \xi)}^{-1} - \Cc_{(M(E_{0}))^*,M(E_{0})}^{-1}
    ,
  \end{equation*}
  so that $\Bc_{z,\zeta} = \Bc_{E_{0},E_{0}} + \Dc_{w,\xi}$.
  Denote by $L:=\Im M_{0}/\|\Im M_{0}\|_{\mathrm{HS}}$ the (left and right) normalized eigenvector of $\Bc_{E_{0},E_{0}}$ corresponding to the eigenvalue $0$.
  Then it follows from the analytic perturbation theory (see, e.g., \cite[Lemma~C.1]{AltErdoKrug20}) that $\Bc_{z,\zeta}$ has a unique eigenvalue inside the disk $\{v\, : \, |v|\leq \varepsilon\}$, which we denote by $\lambda_{z,\zeta}$, that satisfies
  \begin{equation}
    \label{eq:143}
    \lambda_{z,\zeta}
    =
    \la L,\Dc_{w,\xi}[L]\ra + O(|w|^{2}+ |\xi|^{2})
    .
  \end{equation}
  In order to separate the leading term in \eqref{eq:143}, we calculate the derivative $(\partial_w \Dc_{w,\xi}, \partial_\xi \Dc_{w,\xi})$ at $(w,\xi) = (0,0)$
  \begin{equation*}
    \big(\partial_w \Dc_{w,\xi}, \partial_\xi \Dc_{w,\xi}\big)\Big|_{(w,\xi)=(0,0)}
    =
    \Big( \Cc_{-\frac{1}{M_{0}^{*}} (M'_{0})^* \frac{1}{M_{0}^{*}},\frac{1}{M_{0}}} \, , \,  \Cc_{\frac{1}{M_{0}^*},-\frac{1}{M_{0}} M'_{0} \frac{1}{M_{0}}} \Big)
  \end{equation*}
  with $M'_0:=\lim_{y\downarrow 0} M'(E_0 + \imu y)$.
  The first order approximation of $\Dc_{w,\xi}$ gives
  \begin{equation}
    \label{eq:145}
    \lambda_{z,\zeta}
    =
    - \overline{\alpha}\,w  - \alpha \, \xi  + O(|w|^2+|\xi|^2)
    ,
  \end{equation}
  where
  \begin{equation*}
    \alpha
    :=
    \Big\la L , \frac{1}{M_{0}^*} L \frac{1}{M_{0}} M'_{0} \frac{1}{M_{0}} \Big\ra
    .
  \end{equation*}
  We now show that $\Re \alpha = 0$ and $\Im \alpha \geq c> 0$ for some $c>0$.
  For brevity, the $0$ subscript will be dropped, and we will write $M := M_{0}$ and $M':=M'_{0}$.

  Recall that $L = \Im M / \|\Im M\|_{\mathrm{HS}}$, $\Im M$ is symmetric and positive definite, and thus $\alpha \|\Im M\|_{\mathrm{HS}}^2$ is equal to  $\Big\la \big(\Im M\big) \frac{1}{M^*} \big(\Im M \big)\frac{1}{M} M' \frac{1}{M} \Big\ra$.
  After differentiating the Dyson equation \eqref{eq:7} at $z = E_{0}$, we obtain the relation
  \begin{equation*}
    \frac{1}{M} M' \frac{1}{M}
    =
    I_n + \Gamma[M']
    .
  \end{equation*}
  Multiplying the above equation by $\Im M$ on the left and taking the normalized trace yields
  \begin{align}
    \label{eq:148}
    \Big\la \Im M \frac{1}{M} M' \frac{1}{M} \Big\ra
    =
    \la \Im M \ra + \big\la \Im M \,\Gamma[ M'] \big\ra
    =
    \la \Im M \ra + \big\la \Gamma[\Im M]  M' \big\ra
    ,
  \end{align}
  where in the second equality we used that $\Gamma$ is self-adjoint with respect to the scalar product \eqref{eq:51}.
  From the imaginary part of the Dyson equation we know that
  \begin{equation}
    \label{eq:149}
    \Gamma[\Im M]
    =
    \frac{1}{M^*} \Im M \frac{1}{M} 
    .
  \end{equation}
  Plugging the above identity into \eqref{eq:148} gives
  \begin{align*}
    \Big\la \Im M \frac{1}{M} M' \frac{1}{M} \Big\ra
    =
    \la \Im M \ra + \Big\la \frac{1}{M^*} \Im M \frac{1}{M}   M' \Big\ra
    =
    \la \Im M \ra + \Big\la  \Im M \frac{1}{M} M' \frac{1}{M^*} \Big\ra
    .
  \end{align*}
  After moving the second term on the right-hand side to the left-hand side we get that 
  \begin{align}
    \label{eq:152}
    \la \Im M \ra
    &=
      \Big\la \Im M  \frac{1}{M}  M'\Big( \frac{1}{M} -  \frac{1}{M^*} \Big)\Big\ra
    \\  \nonumber
    & 
      =
      -2\imu  \Big\la \Im M \frac{1}{M} M' \Big(\frac{1}{M} \Im M  \frac{1}{M^*} \Big) \Big\ra
      =
      -2\imu \alpha \| \Im M\|^2_{\mathrm{HS}}
      .
  \end{align}
  Since $\la \Im M \ra \geq \theta$, we conclude that
  \begin{equation*}
    \alpha
    =
    \frac{\imu}{2}\frac{\la \Im M \ra}{\|\Im M \|^2_{\mathrm{HS}}}
    ,
  \end{equation*}
  and thus $\Re \alpha = 0$ and $\Im \alpha >0$.
  In particular, \eqref{eq:145} becomes
  \begin{equation}
    \label{eq:154}
    \lambda_{z,\zeta}
    =
    \frac{\imu}{2}\frac{\la \Im M \ra}{\|\Im M \|^2_{\mathrm{HS}}} \,(w - \xi)  + O(|w|^2+|\xi|^2)
    .
  \end{equation}
  The assumption $\tau < \gamma/2$ ensures that $(|w|^2 + |\xi|^2) = o(|w-\xi|)$ on $\Omega \times \Omega$, and thus \eqref{eq:154} gives the leading term in the expansion of $\lambda_{z,\zeta}$.

  Finally, using analytic functional calculus we can decompose $\Bc_{z,\zeta}^{-1}$ on $\Omega^{-}\times \Omega^{+}$ as
  \begin{equation}
    \label{eq:155}
    \Bc_{z,\zeta}^{-1}
    =
    \frac{1}{\lambda_{z,\zeta}}\Pc_{z,\zeta} + \Jc_{z,\zeta}^{(1)}
  \end{equation}
  with
  \begin{equation*}
    \Pc_{z,\zeta}
    :=
    -\frac{1}{2\pi \imu} \int_{|v|=\varepsilon} \Big(\Bc_{z,\zeta} - v\cdot \Id\Big)^{-1} dv
    ,\quad
    \Jc_{z,\zeta}^{(1)}
    :=
    -\frac{1}{2\pi \imu} \int_{\Sigma} \frac{1}{v}\Big(\Bc_{z,\zeta} - v \cdot \Id\Big)^{-1} dv
    ,
  \end{equation*}
  where $\Sigma$ is a contour encircling the eigenvalues of $\Bc_{z,\zeta}$ located away from the $\varepsilon$-disk around zero and not crossing the circle $\{|v|=\varepsilon\}$.
  By analytic perturbation theory, $\Jc_{z,\zeta}^{(1)}$ is bounded for all $(z,\zeta) \in \Omega^{-}\times \Omega^{+}$.
  Moreover, by using \eqref{eq:154} and perturbation theory for the eigenvectors (see, e.g., \cite[Lemma~C.1]{AltErdoKrug20}) we can rewrite the first term in \eqref{eq:155} as
  \begin{align*}
    \frac{1}{\lambda_{z,\zeta}}\Pc_{z,\zeta}
    =
    -\frac{2\imu }{\la \Im M_{0} \ra} \frac{ \la \Im M_{0},\, \cdot \, \ra}{z - \zeta}  \Im M_{0} + \Jc_{z,\zeta}^{(2)}
  \end{align*}
  with $\Jc_{z,\zeta}^{(2)}$ uniformly bounded on $\Omega^{-}\times \Omega^{+}$.
  Setting $\Jc:=\Jc^{(1)} + \Jc^{(2)}$ establishes \eqref{eq:123} and \eqref{eq:124} and finishes the proof of the lemma for $(z,\zeta)\in \Omega^{-}\times \Omega^{+}$.

  In the case when $(z,\zeta)\in \Omega^{+}\times \Omega^{-}$ the equation \eqref{eq:145} becomes
  \begin{equation*}
    \lambda_{z,\zeta}
    =
    - \alpha\,w  - \overline{\alpha} \, \xi  + O(|w|^2+|\xi|^2)
    ,
  \end{equation*}
  while the remaining part of the proof can be repeated line by line.
  This completes the proof.
\end{proof}

It follows from the above lemma that for any $(z,\zeta) \in \Omega\times \Omega$ the equation \eqref{eq:119} has a unique solution given by
\begin{equation*}
  M^{B}(z,\zeta)
  =
  \Bc_{z,\zeta}^{-1}\big[B\big]
  .
\end{equation*}
Combining the results of Lemmas~\ref{lem:3} and \ref{lem:4}  we find the following estimates for $G^{B}(z,\zeta)$ that depend on whether $z$ and $\zeta$ belong to the same complex half-plane.
\begin{cor}
  \label{cor:1}
  Let $\gamma \in (0,1)$, $\tau \in (0, \min\{\gamma/2, 1-\gamma\})$, and let $B \in \CC^{n\times n}$.
  Denote $\etaH := \min\{|\Im z|, |\Im \zeta|\}$.
\item[(i)] Uniformly for  $(z,\zeta) \in \big(\Omega^{+}\times \Omega^{+} \big)\cup\big(\Omega^{-}\times \Omega^{-} \big)$ 
  \begin{equation}
    \label{eq:160}
    \EE\Big[\big\|G^{B}(z,\zeta)\big\|\Big]
    =
    O_{\prec}\bigg( \big\| B \big\| \Big(1 + \frac{1}{N^{1/2} \etaH^{\,3/2}} \Big)\bigg)
    .
  \end{equation}
\item[(ii)] Uniformly for $(z,\zeta) \in \big(\Omega^{-}\times \Omega^{+} \big)\cup\big(\Omega^{+}\times \Omega^{-} \big)$
  \begin{equation}
    \label{eq:161}
    G^{B}(z,\zeta)
    =
    \vartheta \,\frac{2\imu \la \Im M(E_{0}),\, B \, \ra }{\la \Im M(E_{0}) \ra} \frac{1}{z - \zeta}  \Im M(E_{0})  + \errR^{B}(z,\zeta)
  \end{equation}
  with
  \begin{equation}
    \label{eq:162}
    \EE\Big[ \big\| \errR^{B}(z,\zeta) \big\| \Big]
    =
    O_{\prec}\bigg( \big\| B \big\| \Big(1 + \frac{1}{N^{1/2} \etaH^{\,5/2}} \Big)\bigg)
    .
  \end{equation}
\end{cor}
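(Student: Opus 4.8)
The plan is to read off both estimates directly from the self-consistent equation \eqref{eq:90} of Lemma~\ref{lem:3}, once it has been rewritten in terms of the stability operator $\Bc_{z,\zeta}$ and inverted using Lemma~\ref{lem:4}. Since $G^{B}(z,\zeta)$, $M^{B}(z,\zeta)$ and $\errE^{B}(z,\zeta)$ are all linear in $B$, I will first reduce to the normalized case $\|B\| = 1$ and recover the general bounds by scaling.

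The first step is to recast \eqref{eq:90} in operator form. By part (i) of Proposition~\ref{pr:1} together with the extension \eqref{eq:13}, the matrices $M(z)^{-1}$ are uniformly bounded on $\Omega$, so $\Cc_{M(z),M(\zeta)}$ is invertible with $\Cc_{M(z),M(\zeta)}^{-1} = \Cc_{M(z)^{-1},M(\zeta)^{-1}}$ and $\|\Cc_{M(z),M(\zeta)}^{-1}\| \lesssim 1$ on $\Omega\times\Omega$. Applying this operator to \eqref{eq:90} and rearranging yields $\Bc_{z,\zeta}\big[G^{B}(z,\zeta)\big] = B + \Cc_{M(z),M(\zeta)}^{-1}\big[\errE^{B}(z,\zeta)\big]$, with $\Bc_{z,\zeta} = \Cc_{M(z),M(\zeta)}^{-1} - \Gamma$ as in \eqref{eq:121}. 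By Lemma~\ref{lem:4} this operator is invertible on $\Omega\times\Omega$; inverting it and using the identity $M^{B}(z,\zeta) = \Bc_{z,\zeta}^{-1}[B]$ gives the key decomposition $G^{B}(z,\zeta) = M^{B}(z,\zeta) + \Bc_{z,\zeta}^{-1}\Cc_{M(z),M(\zeta)}^{-1}\big[\errE^{B}(z,\zeta)\big]$, in which the last term is controlled through \eqref{eq:91} up to a factor $\|\Bc_{z,\zeta}^{-1}\|$.

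For part (i), when $z$ and $\zeta$ lie in the same half-plane, I would use the uniform bound $\|\Bc_{z,\zeta}^{-1}\| \leq C$ from \eqref{eq:122}: then $\|M^{B}(z,\zeta)\| \lesssim 1$ is a deterministic $O_{\prec}(1)$ leading term, and, combined with $\|\Cc_{M(z),M(\zeta)}^{-1}\| \lesssim 1$ and \eqref{eq:91}, the remaining term has expectation $O_{\prec}(N^{-1/2}\etaH^{-3/2})$, which gives \eqref{eq:160}. For part (ii), when $z$ and $\zeta$ lie in opposite half-planes, I would instead insert the decomposition \eqref{eq:124} of $\Bc_{z,\zeta}^{-1}$: this splits $M^{B}(z,\zeta) = \Bc_{z,\zeta}^{-1}[B]$ into the explicit rank-one term appearing in \eqref{eq:161} plus the uniformly bounded contribution $\Jc_{z,\zeta}[B]$, while for the $\errE^{B}$-term I would use the coarser bound $\|\Bc_{z,\zeta}^{-1}\| \leq C/|z-\zeta|$ from \eqref{eq:123} together with $|z-\zeta| \geq |\Im z| + |\Im\zeta| \geq 2\etaH$ (valid precisely because the imaginary parts have opposite signs) to pick up one additional factor $\etaH^{-1}$, upgrading \eqref{eq:91} to $O_{\prec}(N^{-1/2}\etaH^{-5/2})$. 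Collecting $\Jc_{z,\zeta}[B]$ and this $\errE^{B}$-contribution into $\errR^{B}(z,\zeta)$ then yields \eqref{eq:161}--\eqref{eq:162}.

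The argument is essentially bookkeeping, the substantive input being entirely contained in Lemmas~\ref{lem:3} and \ref{lem:4}; I do not anticipate a genuine obstacle. The two points requiring slight care are that in part (ii) one must peel off \emph{both} deterministic leading pieces ($M^{B}$ itself and $\Jc_{z,\zeta}[B]$) so that what remains in $\errR^{B}$ is genuinely of the claimed size, and that one must exploit $|z-\zeta| \gtrsim \etaH$ rather than any uniform bound on $1/|z-\zeta|$, which fails as $(z,\zeta) \to (E_0,E_0)$.
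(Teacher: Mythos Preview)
Your proposal is correct and follows essentially the same route as the paper: rewrite \eqref{eq:90} as $\Bc_{z,\zeta}[G^{B}] = B + M(z)^{-1}\errE^{B}M(\zeta)^{-1}$, invert via Lemma~\ref{lem:4}, and read off \eqref{eq:160}--\eqref{eq:162} from the bounds \eqref{eq:122}--\eqref{eq:124} combined with \eqref{eq:91}. Your explicit use of $|z-\zeta| \geq |\Im z| + |\Im \zeta| \geq 2\etaH$ in the opposite-half-plane case is exactly the mechanism behind the extra $\etaH^{-1}$ in \eqref{eq:162}, which the paper leaves implicit.
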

\begin{proof}
  After  multiplying both sides of equation \eqref{eq:90} by $M^{-1}(z)$ from the left and $M^{-1}(\zeta)$ from the right, and reordering the terms, we get that
  \begin{equation*}
    \Bc_{z,\zeta}\big[ G^{B}(z,\zeta)\big]
    =
    B + \frac{1}{M(z)}\errE^{B}\frac{1}{M(\zeta)}
    .
  \end{equation*}
  Lemma~\ref{lem:4} implies that the (linear) operator $\Bc_{z,\zeta}$ is invertible for all $(z,\zeta) \in \Omega\times \Omega$ with a uniformly bounded inverse, therefore,
  \begin{equation}
    \label{eq:164}
    G^{B}(z,\zeta)
    =
    \Bc_{z,\zeta}^{-1}\big[B \big] + \Bc_{z,\zeta}^{-1}\bigg[\frac{1}{M(z)}\errE^{B}\frac{1}{M(\zeta)}\bigg]
  \end{equation}
  for all $(z,\zeta)\in \Omega\times \Omega$.
  Finally \eqref{eq:122} and \eqref{eq:124} give the decomposition and the bounds in \eqref{eq:160} and \eqref{eq:161}.
\end{proof}

The bound in \eqref{eq:162} is obtained by directly applying Lemmas~\ref{lem:3} and \ref{lem:4} and serves as a preliminary estimate, which we will considerably improve in the next section.

\subsection{Proof of Theorem~\ref{thm:main} for $\beta = 2$}

\label{sec:proof-5}

In this section we complete the proof of Theorem~\ref{thm:main} for $\beta = 2$.
For this we first need to improve the error term bound in \eqref{eq:162}.
Indeed, after substituting \eqref{eq:160} and \eqref{eq:161} into \eqref{eq:52} and taking a derivative with respect to $\zeta$, the error term \eqref{eq:162} gives a function of order $O_{\prec}(1+N^{-1/2}\etaH^{\,-7/2})$ on $\Omega \times \Omega$.
As we will see below in the proof of Theorem~\ref{thm:main}, multiplying this function by $\frac{\partial \ft(z)}{\partial \overline{z}}  \frac{\partial \ft(\zeta)}{\partial \overline{\zeta}}$ and integrating over $\Omega\times \Omega$ improves the estimate by $\etaH^{\,2}$.
We end up with an overall bound $O_{\prec}(\etaH^{\,2}+N^{-1/2}\etaH^{\,-3/2})$, which is small only as long as $\etaH \gg N^{-1/3}$.
Therefore, in the following lemma we obtain an improved estimate of $\errR^{B}(z,\zeta)$ from \eqref{eq:161} it the case when $z$ and $\zeta$ belong to different complex half-planes.
The main idea in its proof consists of splitting the matrix $B$ into a projection onto a co-dimension 1 subspace on which the operator $\Bc_{z,\zeta}$ has a bounded inverse and its complement.
Consequently, both terms are treated separately. This approach has been first introduced in the context of Wigner matrices in \cite{CipoErdoSchr21}, and was later generalized to their multi-resolvents in \cite{CipoErdoSchr22}.
\begin{lem}[Improved estimate of $G^{B}(z,\zeta)$]
  \label{lem:6}
  Let $\gamma \in (0,1)$,  $\tau \in (0, \min \{\gamma/2, 1-\gamma\})$, and let $B \in \CC^{n\times n}$.
  Then uniformly for $(z,\zeta) \in \big(\Omega^{-}\times \Omega^{+} \big)\cup\big(\Omega^{+}\times \Omega^{-} \big)$
  \begin{equation}
    \label{eq:165}
    G^{B}(z,\zeta)
    =
    \vartheta \,\frac{2\imu }{z - \zeta } \frac{\la \Im M(E_{0}),\, B \, \ra}{\la \Im M(E_{0}) \ra}  \Im M(E_{0})  + \errR^{B}(z,\zeta)
  \end{equation}
  with
  \begin{equation}
    \label{eq:166}
    \EE\Big[ \big\| \errR^{B}(z,\zeta) \big\| \Big]
    =
    O_{\prec}\bigg(\big\| B \big\| \Big( 1 + \frac{1}{N \etaH^{\,2}} + \frac{1}{N^{1/2} \etaH^{\,3/2}} \Big) \bigg)
  \end{equation}
  and $\vartheta = 1$ for $(z,\zeta) \in \Omega^{+}\times \Omega^{-}$ and $\vartheta = -1$ for $(z,\zeta) \in \Omega^{-}\times \Omega^{+}$.
\end{lem}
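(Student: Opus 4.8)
The plan is to deduce the improved estimate from the self-consistent equation of Lemma~\ref{lem:3}, which, after multiplying by $M(z)^{-1}$ and $M(\zeta)^{-1}$, reads
\[
\Bc_{z,\zeta}\big[G^{B}(z,\zeta)\big]
=
B + M(z)^{-1}\,\errE^{B}(z,\zeta)\,M(\zeta)^{-1}
,
\]
and then to invert $\Bc_{z,\zeta}$ using the spectral decomposition $\Bc_{z,\zeta}^{-1} = \lambda_{z,\zeta}^{-1}\Pc_{z,\zeta} + \Jc_{z,\zeta}^{(1)}$ from \eqref{eq:155}, where $\Pc_{z,\zeta}$ is the rank-one spectral projection onto the simple small eigenvalue $\lambda_{z,\zeta}$, its left and right eigenvectors both agree with $\Im M(E_0)/\|\Im M(E_0)\|_{\mathrm{HS}}$ up to $O(|z-\zeta|)$, $|\lambda_{z,\zeta}|\sim|z-\zeta|\geq\etaH$, and $\|\Jc_{z,\zeta}^{(1)}\|\lesssim 1$ (all from \eqref{eq:154}, \eqref{eq:155}, analytic perturbation theory, and Proposition~\ref{pr:1}). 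Applying this to the equation above splits $G^{B}(z,\zeta)$ into four pieces. A direct computation with these expansions identifies $\lambda_{z,\zeta}^{-1}\Pc_{z,\zeta}[B]$ with the explicit leading term of \eqref{eq:165} up to an additive $O(\|B\|)$; one has $\|\Jc_{z,\zeta}^{(1)}[B]\|\lesssim\|B\|$; and, using $\EE\|\errE^{B}\|\prec\|B\|N^{-1/2}\etaH^{-3/2}$ from Lemma~\ref{lem:3} together with the boundedness of $M(z)^{-1},M(\zeta)^{-1}$ from Proposition~\ref{pr:1}, one has $\EE\|\Jc_{z,\zeta}^{(1)}[M(z)^{-1}\errE^{B}M(\zeta)^{-1}]\|\prec\|B\|N^{-1/2}\etaH^{-3/2}$. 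Thus \eqref{eq:165} and \eqref{eq:166} follow once the last piece is controlled, and since $\Pc_{z,\zeta}$ has rank one and $|\lambda_{z,\zeta}|^{-1}\lesssim\etaH^{-1}$, it suffices to prove the key estimate
\[
\big|\big\la\ell_{z,\zeta},\,M(z)^{-1}\errE^{B}(z,\zeta)M(\zeta)^{-1}\big\ra\big|
=
O_{\prec}\big(\|B\|(N\etaH)^{-1}\big)
\]
uniformly on $(\Omega^{-}\times\Omega^{+})\cup(\Omega^{+}\times\Omega^{-})$. Since $\la\ell_{z,\zeta},\Bc_{z,\zeta}[\,\cdot\,]\ra = \lambda_{z,\zeta}\la\ell_{z,\zeta},\,\cdot\,\ra$, this is equivalent to evaluating the contracted two-resolvent quantity $\la\ell_{z,\zeta},G^{B}(z,\zeta)\ra = \frac{1}{nN}\Tr\big(\Gb(z)(B\otimes\Ib_N)\Gb(\zeta)(\ell_{z,\zeta}^{*}\otimes\Ib_N)\big)$ with the sharp error for this configuration.

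The second step is to prove the key estimate via the regular/singular split $B = c_{B}\Im M(E_0) + B^{\circ}$ with $c_{B} := \la\Im M(E_0),B\ra/\la\Im M(E_0),\Im M(E_0)\ra$, so that the regular part $B^{\circ} := B - c_{B}\Im M(E_0)$ obeys $\la\Im M(E_0),B^{\circ}\ra = 0$. The point of this decomposition is that $B^{\circ}$ is, up to $O(|z-\zeta|\,\|B\|)$, annihilated by the projection $\la\ell_{z,\zeta},\,\cdot\,\ra$ onto the unstable direction of $\Bc_{z,\zeta}$, so the stability operator acts with an $O(1)$ inverse on everything built from $B^{\circ}$, whereas the singular part $c_{B}\Im M(E_0)$ carries the full $1/(z-\zeta)$ growth but has the simplest possible fixed matrix structure. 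For the contribution of $B^{\circ}$ one re-derives a self-consistent equation for the chain $\Gb(z)(B^{\circ}\otimes\Ib_N)\Gb(\zeta)$, first in entrywise/isotropic form and then, after contracting with $\ell_{z,\zeta}^{*}\otimes\Ib_N$, in averaged form, by rerunning the cumulant expansion of Appendix~\ref{sec:iid} on this chain, feeding in the single-resolvent local law \eqref{eq:32}--\eqref{eq:33} and a two-resolvent isotropic bound obtained by the same method; since $B^{\circ}$ never excites the unstable eigenvector, inverting the stability operator costs only $O(1)$ and the fluctuation error is $O_{\prec}(\|B\|(N\etaH)^{-1})$ with no $1/(z-\zeta)$ blow-up. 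For the singular part, $\la\ell_{z,\zeta},\Im M(E_0)\ra = \|\Im M(E_0)\|_{\mathrm{HS}} + O(|z-\zeta|)$ reproduces exactly the explicit leading term of \eqref{eq:165}, while the remaining fluctuation of $\la\ell_{z,\zeta},G^{\Im M(E_0)}(z,\zeta)\ra$ is again controlled by the same stability equation restricted to the regular sector (alternatively, one may absorb $\Im M(E_0)\otimes\Ib_N$ into a Ward-type resolvent identity and reduce to the averaged single-resolvent law \eqref{eq:33}). Adding the two contributions yields the key estimate, and hence the lemma.

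The hard part will be the two-resolvent local law for the regular observable $B^{\circ}$ with the optimal error $O_{\prec}((N\etaH)^{-1})$. The operator-norm bound $\EE\|\errE^{B}\|\prec\|B\|N^{-1/2}\etaH^{-3/2}$ of Lemma~\ref{lem:3} falls short of what is needed by a factor $\sqrt{\etaH/N}$ after division by $|\lambda_{z,\zeta}|\sim|z-\zeta|$, and a naive second-moment argument (fluctuation averaging over the block index $k$) improves the contracted error only by $\sqrt{N}$, still leaving an $\etaH^{-1/2}$ deficit. Closing this gap requires rerunning the cumulant expansion for the two-resolvent chain and using the cancellations that arise from contracting against the regular direction, so that the unstable eigenvector of the stability operator is never hit; this is the mechanism introduced in \cite{CipoErdoSchr21} and extended to multi-resolvent chains in \cite{CipoErdoSchr22}. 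In the present block-correlated setting the expansion has to carry the matrix structure of $M$, $\Gamma$, the multiplication operators $\Cc_{M(z),M(\zeta)}$ and the stability operator $\Bc_{z,\zeta}$ through every step, using the partial-trace identities \eqref{eq:44}, \eqref{eq:50} and the $L$-flatness bounds of Proposition~\ref{pr:1}; by comparison, the spectral decomposition of Lemma~\ref{lem:4}, the matching of the explicit leading term, and the treatment of the singular part are routine.
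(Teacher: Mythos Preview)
Your proposal takes a genuinely different and much heavier route than the paper, and as written it has a gap in the singular part.

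The paper's proof avoids any new two-resolvent local law. For the regular part $B^\circ$ (with $\la\Im M_0,B^\circ\ra=0$) it uses a cyclicity/duality trick: from \eqref{eq:89} and the trace one has
\[
\big\la \Im M_0,\,G^{B^\circ}(z,\zeta)\big\ra
=\big\la G^{\Im M_0}(\bar z,\bar\zeta),\,B^\circ\big\ra
=\Big\la \Im M_0 + M(\bar z)^{-1}\errE^{\Im M_0}M(\bar\zeta)^{-1},\,(\Bc_{\bar z,\bar\zeta}^{-1})^*[B^\circ]\Big\ra,
\]
and since $B^\circ$ is orthogonal to the unstable direction, $(\Bc^{-1})^*[B^\circ]=\Jc^*[B^\circ]=O(\|B\|)$ is bounded. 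This immediately gives $\Qc[G^{B^\circ}]=O_\prec(\|B\|(1+N^{-1/2}\etaH^{-3/2}))$ using only the \emph{existing} error bound \eqref{eq:91} on $\errE^{\Im M_0}$; no rerun of the cumulant expansion is needed. For the singular part the paper's crucial choice is to take $I_n$, not $\Im M_0$, as the representative: it writes $B^\circ:=B-\frac{\la\Im M_0,B\ra}{\la\Im M_0\ra}I_n$, so that the singular piece is a scalar times $G^{I_n}(z,\zeta)$, and then the elementary resolvent identity $\Gb(z)\Gb(\zeta)=(z-\zeta)^{-1}(\Gb(z)-\Gb(\zeta))$ reduces this to the single-resolvent averaged law \eqref{eq:33}, yielding the $(N\etaH^{2})^{-1}$ term.

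Your split uses $\Im M_0$ as the singular representative, and this is where the gap is. The claim that the fluctuation of $\la\ell_{z,\zeta},G^{\Im M_0}\ra$ is ``controlled by the same stability equation restricted to the regular sector'' cannot be right, since $\Im M_0$ is precisely the singular direction; and the alternative ``Ward-type resolvent identity'' does not exist for $\Im M_0\otimes\Ib_N$ in the middle (there is no algebraic reduction of $\Gb(z)(\Im M_0\otimes\Ib_N)\Gb(\zeta)$ to single resolvents). Replacing $\Im M_0$ by $I_n$ fixes this immediately. Your proposed two-resolvent local law for the regular observable, while plausible via the machinery of \cite{CipoErdoSchr21,CipoErdoSchr22}, is entirely unnecessary here and would require substantial additional work carrying the block structure through the expansion; the paper's duality argument bypasses all of that in a few lines.
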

\begin{proof}
  We split the proof into two steps by first considering the case $\la \Im M(E_0), B\ra = 0$, and then establishing \eqref{eq:165} for general $B \in \CC^{n\times n}$.
  As before, we denote $M_0 := \lim_{y\downarrow 0} M(E_0 + \imu y)$ for brevity.

  \textbf{Step 1.} Suppose that $\la \Im M_0, B\ra = 0$.
  Denote by $\Qc :\CC^{n\times n} \to \CC^{n\times n}$ the projection onto $\Im M_0$ given by
  \begin{equation}
    \label{eq:167}
    \Qc
    :=
    \frac{\la \Im M_0, \, \cdot \, \ra}{\| \Im M_0 \|^2_{\mathrm{HS}}} \Im M_0
    ,
  \end{equation}
  and notice that from \eqref{eq:124} we have that $(\Id - \Qc) \Bc_{z,\zeta}^{-1} = \Jc_{z,\zeta}$.
  Therefore, \eqref{eq:164}, \eqref{eq:124} and \eqref{eq:91} imply that
  \begin{equation}
    \label{eq:168}
    G^{B}(z,\zeta)
    =
    \Jc_{z,\zeta} \bigg[B + \frac{1}{M(z)}\errE^{B} \frac{1}{M(\zeta)}\bigg] + \Qc\Big[ G^{B}(z,\zeta)\Big]
    ,
  \end{equation}
  where the first term, after taking norm and expectation, is of order $O_{\prec}(\|B\|(1 + N^{-1/2} \etaH^{-3/2}) )$ by\eqref{eq:91}.
  In order to obtain an improved bound of the second term, we notice that
  \begin{equation*}
    \Qc\Big[ G^{B}(z,\zeta)\Big]
    =
    \frac{1}{\| \Im M_0 \|_{\mathrm{HS}}^{2}} \Big\la \Im M_0, G^{B}(z,\zeta) \Big\ra \Im M_0
    .
  \end{equation*}
  Using the cyclicity of the trace together with the definitions \eqref{eq:89} and \eqref{eq:51} we get
  \begin{equation*}
    \Big\la \Im M_0, G^{B}(z,\zeta) \Big\ra
    =
    \Big\la \Im M_0, \frac{1}{N} \sum_{k,l=1}^{N}G_{lk}(z)B G_{kl}(\zeta)\Big\ra
    =
    \Big\la  G^{\Im M_0}_{\overline{z}, \overline{\zeta}}, B \Big\ra
    .
  \end{equation*}
  Applying \eqref{eq:164} to $G^{\Im M_0}_{\overline{z}, \overline{\zeta}}$ gives
  \begin{align*}
    \Big\la  G^{\Im M_0}_{\overline{z}, \overline{\zeta}}, B \Big\ra
    &=
      \Big\la  \Bc_{\overline{z}, \overline{\zeta}}^{-1}\Big[\Im M_0 + \frac{1}{M(\overline{z})}\errE^{\Im M_0}\frac{1}{M(\overline{\zeta})}\Big], B \Big\ra
    \\
    &=
      \Big\la  \Im M_0 + \frac{1}{M(\overline{z})}\errE^{\Im M_0}\frac{1}{M(\overline{\zeta})}, \Big(\Bc_{\overline{z}, \overline{\zeta}}^{-1}\Big)^*\big[B\big] \Big\ra
      ,
  \end{align*}
  where $\EE[\|\errE^{\Im M_0}\|] = O_{\prec}(N^{-1/2}\etaH^{\,-3/2})$ due to \eqref{eq:91}.
  Since $\big\la \Im M_0, B \big \ra = 0$, the decomposition \eqref{eq:124} implies that $\Big(\Bc_{\overline{z}, \overline{\zeta}}^{-1}\Big)^*\big[B\big] = \Jc_{\overline{z},\overline{\zeta}}^{*}[B] = O(\|B\|)$, and we conclude that
  \begin{equation*}
    \Qc\Big[ G^{B}(z,\zeta)\Big]
    =
    O_{\prec}\Big(\|B\|\Big(1 + \frac{1}{N^{1/2}\etaH^{\, 3/2}}\Big)\Big)
    .
  \end{equation*}
  Combining this with the estimate of the first term in \eqref{eq:168} we get that 
  \begin{equation}
    \label{eq:173}
    \EE\Big[\Big\|G^{B}(z,\zeta)\Big\|\Big]
    =
    O_{\prec}\bigg( \big\| B \big\| \Big(1 + \frac{1}{N^{1/2} \etaH^{\,3/2}} \Big)\bigg)
  \end{equation}
  uniformly for $(z,\zeta) \in \big(\Omega^{-}\times \Omega^{+} \big)\cup\big(\Omega^{+}\times \Omega^{-} \big)$.
  
  \textbf{Step 2.} Consider now general $B \in \CC^{n\times n}$. 
  Denote
  \begin{equation*}
    B^{\circ}
    :=
    B - \frac{\la \Im M_0, B \ra}{\la \Im M_0 \ra} I_n
    .
  \end{equation*}
  This definition is reminiscent of the definition of $B^{\circ}$ from \cite{CipoErdoHenhSchr} and \cite{CipoErdoHenhKolu}, where it was used to control the 2-resolvent expression for hermitization of matrices with i.i.d. entries and  generally deformed Wigner matrices correspondingly. 
  Then $\la \Im M_0, B^{\circ} \ra = 0$ and
  \begin{equation}
    \label{eq:175}
    G^{B}(z,\zeta)
    =
    \frac{\la \Im M_0, B \ra}{\la \Im M_0 \ra } G^{I_n}(z,\zeta) + G^{B^{\circ}}(z,\zeta)
    .
  \end{equation}
  Using the resolvent identity we have
  \begin{equation*}
    G^{I_n}(z,\zeta)
    =
    \big(\Id_n \otimes \frac{1}{N}\Tr_N\big) \Big(\Gb(z) \Gb(\zeta)\Big)
    =
    \frac{1}{z - \zeta} \big(\Id_n \otimes \frac{1}{N}\Tr_N\big) \Big(\Gb(z) - \Gb(\zeta)\Big)
    .
  \end{equation*}
  After applying the local law \eqref{eq:33} we get
  \begin{equation*}
    G^{I_n}(z,\zeta)
    =
    \frac{1}{z - \zeta} \big(M(z) - M(\zeta)\big) + O_{\prec}\Big(\frac{1}{N\etaH^{\, 2}} \Big)
    .
  \end{equation*}
  Finally, the analyticity of $M$ implies that
  \begin{equation*}
    G^{I_n}(z,\zeta)
    =
    \vartheta\frac{2 \imu }{z - \zeta} \Im M_0 + O_{\prec}\Big(1 + \frac{1}{N\etaH^{\, 2}} \Big)
    .
  \end{equation*}
  Together with \eqref{eq:175} and \eqref{eq:173} this gives
  \begin{equation*}
    G^{B}(z,\zeta)
    =
    \vartheta\frac{2 \imu}{z - \zeta} \frac{\la \Im M_0, B \ra}{\la \Im M_0 \ra } \Im M_0 + O_{\prec}\Big(\|B\|\Big(1 + \frac{1}{N\etaH^{\, 2}}  + \frac{1}{N^{1/2}\etaH^{\, 3/2}}\Big) \Big)
    .
  \end{equation*}
  This finishes the proof for general $B\in \CC^{n\times n}$.
\end{proof}
Lemma~\ref{lem:5} improves the estimate of the error them in $G^{B}(z,\zeta)$ for $(z,\zeta) \in \big(\Omega^{-}\times \Omega^{+} \big)\cup\big(\Omega^{+}\times \Omega^{-} \big)$ by roughly $\etaH$, which is crucial for establishing Theorem~\ref{thm:main} for \emph{all} mesoscopic scales.
We now proceed to the proof of the main result.

\begin{proof}[Proof of Theorem~\ref{thm:main} for $\beta = 2$]
  We start by introducing additional notation.
  Let $B_{ij}: \CC \to \CC^{n\times n}$, $1\leq i,j \leq n$, be the collection of deterministic $n\times n$ matrix-valued functions
  \begin{equation*}
    B_{ij}(z)
    =
    \frac{1}{M(z)} \Bc_{z}^{-1}[I_n ] \, \Gamma [E_{ij} ]
    .
  \end{equation*}
  Consider the integral
  \begin{equation}
    \label{eq:181}
    \Vc
    :=
    \frac{1}{\pi^2} \int\displaylimits_{\Omega \times \Omega}  \frac{\partial \ft(z)}{\partial \overline{z}}  \frac{\partial \ft(\zeta)}{\partial \overline{\zeta}} \frac{\partial}{\partial \zeta} \EE\Big[  \sum_{i,j=1}^{n}  \Tr \Big(E_{ji} \, G^{B_{ij}(z)}(z,\zeta) \Big)  \efr(t) \Big] \dz \dzeta 
    .
  \end{equation}
  This is the integral that appears in \eqref{eq:52} with $S_{ij}(z,\zeta) = G^{B_{ij}(z)}(z,\zeta)$.
  Using the analyticity of $M$ at $E_0$ and the boundedness of $\Bc_{E_0}^{-1}$, we  write
  \begin{equation}
    \label{eq:182}
    B_{ij}(z)
    =
    B_{ij}(E_0) + \widehat{B}_{ij}(z)  
  \end{equation}
  with
  \begin{equation*}
    B_{ij}(E_0)
    :=
    \frac{1}{M(E_0)} \Bc_{E_0}^{-1}[I_n ] \, \Gamma [E_{ij} ]
  \end{equation*}
  and $\|\widehat{B}_{ij}(z)\| = O(|\Im z|)$.
  In particular, $\|B_{ij}(z)\|\lesssim 1$ uniformly on $\Omega$.

We split the domain of integration in \eqref{eq:181} into four regions determined by the signs of $\Im z$ and $\Im \zeta$, namely,
  \begin{equation}
    \label{eq:184}
    \Omega\times \Omega
    =
    \big(\Omega^{+}\times \Omega^{+}\big) \cup \big(\Omega^{+}\times \Omega^{-}\big) \cup \big(\Omega^{-}\times \Omega^{+}\big) \cup \big(\Omega^{-}\times \Omega^{-}\big)
    ,
  \end{equation}
  and denote the integrals over the corresponding four regions by $\Vc^{\,(+,+)}$, $\Vc^{\,(+,-)}$, $\Vc^{\,(-,+)}$ and $\Vc^{\,(-,-)}$, so that
  \begin{equation}
    \label{eq:185}
    \Vc
    =
    \Vc^{\,(+,+)} + \Vc^{\,(+,-)} + \Vc^{\,(-,+)} + \Vc^{\,(-,-)}
    .
  \end{equation}
  Now we treat each term separately.
  Consider first $\Vc^{\,(+,+)}$.
  If we denote
  \begin{equation*}
    h_1(z,\zeta)
    :=
    \EE\Big[  \sum_{i,j=1}^{n}  \Tr \Big(E_{ji} \, G^{B_{ij}(z)}(z,\zeta) \Big)  \efr(t) \Big]
    ,
  \end{equation*}
  then 
  \begin{equation}
    \label{eq:187}
    \Vc^{(+,+)}
    =
    \frac{1}{\pi^2}\int\displaylimits_{\Omega^{+} \times \Omega^{+}}  \frac{\partial \ft(z)}{\partial \overline{z}}  \frac{\partial \ft(\zeta)}{\partial \overline{\zeta}} \frac{\partial}{\partial \zeta} \,h_{1}(z,\zeta) \, \dz \dzeta \,.
  \end{equation}
  The estimate \eqref{eq:160} and the boundedness of $B_{ij}(z)$ implies that $\EE\big[\big\|G^{B_{ij}(z)}(z,\zeta)\big\|\big]\prec 1 + N^{-1/2}\etaH^{\,-3/2}$ uniformly on $\Omega^{+} \times \Omega^{+}$.
  Since $M^{-1}$ is uniformly bounded on $\Omega$ and $|\efr(t)|\leq 1$, we have that $h_1$ is an analytic function on $\Omega^{+} \times \Omega^{+}$  satisfying the bound
  \begin{equation}
    \label{eq:188}
    |h_1(z,\zeta)|
    \prec
    1 + \frac{1}{N^{1/2} \etaH^{\,3/2}}
    .
  \end{equation}
  The Cauchy integral formula applied to $\frac{\partial}{\partial \zeta} h_{1}(z,\zeta)$ yields the bound
  \begin{equation}
    \label{eq:189}
    \Big|\frac{\partial }{\partial \zeta} h_1(z,\zeta)\Big|
    \prec
    \frac{1}{\etaH} + \frac{1}{N^{1/2} \etaH^{\,5/2}}
  \end{equation}
  uniformly on $\Omega^{+} \times \Omega^{+}$.
  By using Stokes' theorem \eqref{eq:85} (twice) and the definition of $\ft$ in \eqref{eq:26}, we  rewrite the integral in \eqref{eq:187} as 
  \begin{equation*}
    \frac{1}{\pi^2}\int\displaylimits_{\Omega^{+} \times \Omega^{+}}  \frac{\partial \ft(z)}{\partial \overline{z}}  \frac{\partial \ft(\zeta)}{\partial \overline{\zeta}} \frac{\partial}{\partial \zeta} \,h_{1}(z,\zeta) \, \dz \dzeta 
    =
    \frac{1}{4 \pi^2} \int\displaylimits_{\partial\Omega^{+} \times \partial\Omega^{+}}   \ft(z)  \ft(\zeta) \frac{\partial}{\partial \zeta} \,h_{1}(z,\zeta) \, dz d\zeta 
    .
  \end{equation*}
  Since $\ft$ vanishes everywhere on $\partial \Omega^{+}$ except the part that intersects with the line $\Im z = N^{-\tau} \eta_0$, and $\frac{\partial}{\partial \zeta} h_{1}(z,\zeta)$ satisfies the bounds \eqref{eq:189}, we  estimate this integral as
  \begin{multline}
    \label{eq:191}
    \bigg|\int\displaylimits_{\partial\Omega^{+} \times \partial\Omega^{+}}   \ft(z)  \ft(\zeta) \frac{\partial}{\partial \zeta} \,h_{1}(z,\zeta) \, dz d\zeta \bigg|
    \\ \prec
    \int\displaylimits_{E_{0}-2\delta}^{E_{0} + 2\delta} \int\displaylimits_{E_{0}-2\delta}^{E_{0} + 2\delta}   \big|\ft(x_1+\imu N^{-\tau} \eta_{0})\ft(x_2+\imu N^{-\tau} \eta_{0})\big| \Big( \frac{N^{\tau}}{\eta_0} + \frac{N^{5\tau/2}}{N^{1/2}\eta_{0}^{5/2}} \Big) \,dx_1 dx_2
    .
  \end{multline}
  From the definition of $\ft$ in \eqref{eq:26} close the real line we know that $\ft(x+\imu N^{-\tau} \eta_0) = f(x) + \imu N^{-\tau }\eta_0 f'(x)$, and thus \eqref{eq:191} can be further estimated as
  \begin{multline}
    \label{eq:192}
    \bigg|\int\displaylimits_{\partial\Omega^{+} \times \partial\Omega^{+}}   \ft(z)  \ft(\zeta) \frac{\partial}{\partial \zeta} \,h_{1}(z,\zeta) \, dz d\zeta \bigg|
    \\
    \prec
    \big(\|f\|_1 + N^{-\tau}\eta_{0}\|f'\|_{1} \big)^{2} \Big( \frac{N^{\tau}}{\eta_0} + \frac{N^{5\tau/2}}{N^{1/2}\eta_{0}^{5/2}} \Big)
    \prec
    N^{-\tau}\eta_0 + \frac{N^{5\tau/2}}{(N\eta_{0})^{1/2}}
    ,
  \end{multline}
  where in the last step we used the norm bounds for $f$ and $f'$ from \eqref{eq:23}.
  In this case it is sufficient to have $\gamma \in (0,1)$ and  $\tau \in (0, \min \{(1-\gamma)/7, \gamma/2 \})$.
  Indeed, with this restriction on $\gamma$ and $\tau$ the last expression in \eqref{eq:192} gives $N^{-\gamma - \tau} + N^{(-1+\gamma + 5\tau)/2} \lesssim N^{-\tau}$, that ensures the convergence to zero as $N\to \infty$.
  The same argument can be applied to estimate the integral over the set $\Omega^{-} \times \Omega^{-}$, giving
  \begin{equation}
    \label{eq:193}
    \big| \Vc^{(+,+)} \big|
    +
    \big| \Vc^{(-,-)} \big|
    \prec
    N^{-\tau}
    .
  \end{equation}

  Consider now $\Vc^{(+,-)}$, the second term in \eqref{eq:185}
  \begin{equation}
    \label{eq:194}
    \Vc^{(+,-)}
    =
    \frac{1}{\pi^2} \int\displaylimits_{\Omega^{+} \times \Omega^{-}}  \frac{\partial \ft(z)}{\partial \overline{z}}  \frac{\partial \ft(\zeta)}{\partial \overline{\zeta}} \frac{\partial}{\partial \zeta} \EE\Big[  \sum_{i,j=1}^{n}  \Tr \Big(E_{ji} \, G^{B_{ij}(z)}(z,\zeta) \Big)  \efr(t) \Big] \dz \dzeta
    .
  \end{equation}
  Using the improved estimate \eqref{eq:165} for $G^{B(z)}(z,\zeta)$ from Lemma~\ref{lem:6} and the approximation of $B_{ij}(z)$ by $B_{ij}(E_0)$ on $\Omega \times \Omega$ from \eqref{eq:182}, we  rewrite the expectation in \eqref{eq:194} as
  \begin{multline}
    \label{eq:195}
    \frac{1}{\pi^2} \EE\Big[  \sum_{i,j=1}^{n}  \Tr \Big(E_{ji} \, G^{B_{ij}(z)}(z,\zeta) \Big)  \efr(t) \Big]
    \\
    =
     \frac{\EE\big[  \efr(t) \big]}{\pi^2 (z - \zeta )}  \sum_{i,j=1}^{n}  \Tr \Big(E_{ji} \, \frac{2\imu }{\la \Im M(E_{0}) \ra} \Im M(E_{0}) \la \Im M(E_{0}),\, B_{ij}(E_0) \, \ra  \Big)  + h_2(z,\zeta) 
    ,
  \end{multline}
  where $h_{2}(z,\zeta)$ is analytic on $\Omega^{+}\times \Omega^{-}$ and satisfies $|h_{2}(z,\zeta)|\prec (N^{\tau} + N^{-1/2}\etaH^{\,-3/2})$ uniformly on $\Omega^{+}\times \Omega^{-}$.
  The $N^{\tau}$ term in the last estimate comes from the bound
  \begin{equation}
    \label{eq:196}
    \Big| \frac{B_{ij}(z)- B_{ij}(E_0)}{z-\zeta} \Big|
    \prec
    N^{\tau},
  \end{equation}
  holding on $\Omega^{+}\times \Omega^{-}$.
  Repeating the computations in \eqref{eq:188}-\eqref{eq:192} we arrive at
  \begin{equation}
    \label{eq:197}
    \bigg|   \int\displaylimits_{\Omega^{+} \times \Omega^{-}}  \frac{\partial \ft(z)}{\partial \overline{z}}  \frac{\partial \ft(\zeta)}{\partial \overline{\zeta}} \frac{\partial}{\partial \zeta} \,h_{2}(z,\zeta) \, \dz \dzeta
    \bigg|
    \prec
    \eta_0 +  \frac{N^{5\tau/2}}{(N\eta_{0})^{1/2}}
    \lesssim
    N^{-\tau}
  \end{equation}
  for $\gamma \in (0,1)$ and  $\tau \in (0, \min \{(1-\gamma)/7, \gamma/2 \})$.

  We now simplify the expression in the first term in \eqref{eq:195}.
  Denote for brevity $M_0 := \lim_{y\downarrow 0} M(E_0 + \imu y)$ and $M_0' := \lim_{y\downarrow 0} M'(E_0 + \imu y)$.
  Then the $(z,\zeta)$-independent constant appearing in the first term of \eqref{eq:195} is written as
  \begin{align} \label{eq:phi+-}
    \phi^{\,(+,-)}
    &:=
      \sum_{i,j=1}^{n}  \Tr \Big(E_{ji} \, \frac{2\imu }{\la \Im M_{0} \ra}  \Im M_{0} \Big\la \Im M_{0},\, \frac{1}{M_{0}} \Bc_{E_0}^{-1}[I_n ] \, \Gamma [E_{ij} ] \, \Big\ra \Big)
    \\ \nonumber
    &=
      \frac{2\imu n }{\la \Im M_{0} \ra} \sum_{i,j=1}^{n} \Big\la \Im M_{0}\, \frac{1}{M_{0}} \Bc_{E_0}^{-1}[I_n ] \,  \Gamma [E_{ij} ] \, \Big\ra  \Big\la E_{ji} \,   \Im M_{0}  \Big\ra
    \\ \nonumber
    &
      =
      \frac{2\imu n }{\la \Im M_{0} \ra} \sum_{i,j=1}^{n} \Big\la \Gamma \Big[\Im M_{0}\, \frac{1}{M_{0}} \Bc_{E_0}^{-1}[I_n ] \,  \Big] E_{ij}  \, \Big\ra  \Big\la E_{ji} \,   \Im M_{0}  \Big\ra
    \\ \label{eq:198}
    &
      =
      \frac{2\imu  }{\la \Im M_{0} \ra}  \Big\la \Gamma \Big[\Im M_{0}\, \frac{1}{M_{0}} \Bc_{E_0}^{-1}[I_n ] \Big]  \,   \Im M_{0}  \Big\ra
      .
  \end{align}
  By differentiating the Dyson equation \eqref{eq:7}  at $z = E_0$ and taking the conjugate-transpose on both sides, we get that $\Bc_{E_0}\big[ M'_{0}\big] = I_n$, so that
  \begin{align}
    \label{eq:199}
    \Bc_{E_{0}}^{-1}[I_n ]
    =
    M'_0
    .
  \end{align}
  Plugging this into \eqref{eq:198} yields
  \begin{align}
    \nonumber
    &\Big\la \Gamma \Big[\Im M_{0}\, \frac{1}{M_{0}}M'_0 \Big]  \,   \Im M_{0}  \Big\ra
     =
      \Big\la \Im M_{0}\, \frac{1}{M_{0}} M'_0 \,   \Gamma \big[\Im M_{0}\big]  \Big\ra
    \\ \label{eq:200}
    &  \qquad\qquad\qquad\qquad
      =
      \Big\la \Im M_{0}\, \frac{1}{M_{0}}M'_0  \,   \frac{1}{M_0} \Im M_0 \frac{1}{(M_0)^*}  \Big\ra
      =
      \frac{\imu}{2} \big\la \Im M_0 \big\ra
      ,
  \end{align}
  where we used \eqref{eq:149} in the second  and  \eqref{eq:152} in the last step.  For $\phi^{\,(+,-)}$ from \eqref{eq:phi+-} we obtain  $ \phi^{\,(+,-)} = -1$.
  Now, using the analyticity of the function $(z-\zeta)^{-2}$ on $\Omega^{+}\times \Omega^{-}$ and Stokes' theorem we get
  \begin{align} \label{eq:201}
    &\int\displaylimits_{\Omega^{+} \times \Omega^{-}}\frac{\partial \ft(z)}{\partial \overline{z}}  \frac{\partial \ft(\zeta)}{\partial \overline{\zeta}} \frac{\partial}{\partial \zeta} \Big( \frac{\phi^{\,(+,-)}}{z-\zeta} \Big) \frac{\dz \dzeta}{\pi^2}
      =
      \int\displaylimits_{\Omega^{+} \times \Omega^{-}}\frac{\partial \ft(z)}{\partial \overline{z}}  \frac{\partial \ft(\zeta)}{\partial \overline{\zeta}} \frac{(-1)}{(z-\zeta)^2} \frac{\dz \dzeta}{\pi^2}
    \\ \nonumber
    &
      =
      \int\displaylimits_{\Omega^{+} \times \Omega^{-}}\frac{\partial }{\partial \overline{z}}  \frac{\partial }{\partial \overline{\zeta}} \frac{(\ft(z)- \ft(\zeta))^2}{(z-\zeta)^2} \frac{ \dz \dzeta}{2 \pi^2}
      =
      -\frac{1}{8 \pi^2}\int\displaylimits_{\partial \Omega^{+} \times \partial \Omega^{-}} \frac{(\ft(z)- \ft(\zeta))^2}{(z-\zeta)^2}  dz d\zeta
      .
  \end{align}
  The function $\ft$ vanishes everywhere on $\partial \Omega^{+} \times \partial \Omega^{-}$ except the part that intersects with the lines $\Im z = N^{-\tau} \eta_0$ and $\Im \zeta = N^{-\tau} \eta_0$, thus, the last integral in \eqref{eq:201} can be written as
  \begin{multline}
    -\frac{1}{8 \pi^2}\int_{\partial \Omega^{+} \times \partial \Omega^{-}} \frac{(\ft(z)- \ft(\zeta))^2}{(z-\zeta)^2}  dz d\zeta
    \\ \label{eq:203}
    =
    \frac{1}{8 \pi^2}\int\displaylimits_{E_0-2\delta}^{E_0+2\delta} \int\displaylimits_{E_0-2\delta}^{E_0+2\delta} \frac{(f(x) + \imu N^{-\tau} \eta_0 f'(x) - f(y) + \imu N^{-\tau} \eta_0 f'(y))^2}{(x-y+2\imu N^{-\tau}\eta_0)^2}  dx dy
    ,
  \end{multline}
  where the change of the sign is due to the change in the orientation of the contour $\partial \Omega^{-}$.
  Recall that $f(x) = g((x-E_0)/\eta_0)$.
  By changing the variables $s=(x-E_0)/\eta_0$, $t=(y-E_0)/\eta_0$, the last integral becomes
  \begin{align*}
    \frac{1}{8 \pi^2}\int_{\RR} \int_{\RR} \frac{(g(s) - g(t) + \imu N^{-\tau}( g'(s)   + g'(t))^2}{(s-t+2\imu N^{-\tau})^2}  ds dt
    .
  \end{align*}
  Combining this with \eqref{eq:195}, \eqref{eq:197}, \eqref{eq:phi+-} and \eqref{eq:201} we end up with the following expression for $\Vc^{(+,-)}$ from \eqref{eq:194}:
  \begin{equation}
    \label{eq:205}
    \Vc^{(+,-)}
    =
    \frac{\EE\big[ \efr(t) \big]}{8 \pi^2}\int_{\RR} \int_{\RR} \frac{(g(s) - g(t) + \imu N^{-\tau}( g'(s)   + g'(t))^2}{(s-t+2\imu N^{-\tau})^2}  ds dt + O_{\prec}(N^{-\tau})
    .
  \end{equation}

  The last term left to evaluate is
  \begin{equation*}
    \Vc^{(-,+)}
    =
    \frac{1}{\pi^2} \int\displaylimits_{\Omega^{-} \times \Omega^{+}}  \frac{\partial \ft(z)}{\partial \overline{z}}  \frac{\partial \ft(\zeta)}{\partial \overline{\zeta}} \frac{\partial}{\partial \zeta} \EE\Big[  \sum_{i,j=1}^{n}  \Tr \Big(E_{ji} \, G^{B_{ij}(z)}(z,\zeta) \Big)  \efr(t) \Big] \dz \dzeta
    .
  \end{equation*}
  Proceeding as in the analysis of $\Vc^{(+,-)}$ in \eqref{eq:195}-\eqref{eq:197} we have that
  \begin{align*}
    \Vc^{(-,+)}
    =
    \frac{E\big[\efr(t)\big]}{\pi^2}\int_{\Omega^{-} \times \Omega^{+}}\frac{\partial \ft(z)}{\partial \overline{z}}  \frac{\partial \ft(\zeta)}{\partial \overline{\zeta}} \frac{\partial}{\partial \zeta} \Big( \frac{\phi^{\,(-,+)}}{z-\zeta} \Big) \dz \dzeta
    +
    O_{\prec}(N^{-\tau})
    ,
  \end{align*}
  where
  \begin{equation}
    \label{eq:207}
    \phi^{\,(-,+)}
    =
    - \sum_{i,j=1}^{n}  \Tr \Big(E_{ji} \, \frac{2\imu }{\la \Im M_{0} \ra}  \Im M_{0} \Big\la \Im M_{0},\, \frac{1}{M_0^{*}}\big(\Bc_{E_{0}}^{-1}[I_n ]\big)^* \Gamma [E_{ij} ] \, \Big\ra \Big)
    .
  \end{equation}
  The expression in \eqref{eq:207} reflects the fact that $\vartheta =-1$ in \eqref{eq:124} and  $\lim_{z\to E_0}M(z) = (M(E_0))^{*}$ due to the analytic extension of $M(z)$ for  $z\in \CC_{-}$.
  Then from \eqref{eq:199} and \eqref{eq:200} we obtain that
  \begin{equation*}
    \Big\la \Gamma \Big[\Im M_{0}\, \frac{1}{(M_{0})^{*}}(M'_0)^{*} \Big]  \,   \Im M_{0}  \Big\ra
    =
    -\frac{\imu}{2} \big\la \Im M_0 \big\ra
    .
  \end{equation*}
  This cancels the minus sign coming from $\vartheta$, and therefore $\phi^{\,(-,+)} = -1$ and
  \begin{equation}
    \label{eq:209}
    \Vc^{(-,+)}
    =
    \frac{\EE\big[ \efr(t)\big]}{8 \pi^2}\int\displaylimits_{\RR} \int\displaylimits_{\RR} \frac{(g(s) - g(t) + \imu N^{-\tau}( g'(s)   + g'(t))^2}{(s-t-2\imu N^{-\tau})^2}  ds dt
    +
    O_{\prec}(N^{-\tau})
    .
  \end{equation}
  Combining Lemma~\ref{lem:2}, \eqref{eq:185}, \eqref{eq:192}, \eqref{eq:193}, \eqref{eq:205} and \eqref{eq:209} gives
  \begin{equation}
    \label{eq:210}
    \frac{d}{dt} \EE[\efr(t)]
    =
    - t \, V_N[g]\, \EE[\efr(t)] + O_{\prec}\big(N^{-\tau}(1 + |t|)\big)
    ,
  \end{equation}
  where
  \begin{equation*}
    V_N[g]
    :=
    \frac{1}{4 \pi^2}\int_{\RR} \int_{\RR} \frac{\big(g(s) - g(t) + \imu N^{-\tau}( g'(s)   + g'(t)\big)^2}{\big(s-t+2\imu N^{-\tau}\big)^2}  ds dt
    .
  \end{equation*}
  Notice that 
  \begin{equation*}
    \bigg| \frac{\big(g(s) - g(t) + \imu N^{-\tau}( g'(s)   + g'(t)\big)^2}{\big(s-t+2\imu N^{-\tau}\big)^2} \bigg|
    =
    \frac{|g(s) - g(t)|^2 }{|s-t|^2 + 4 N^{-2\tau}} +  N^{-2\tau}\frac{| g'(s)   + g'(t)|^2}{|s-t|^2 + 4 N^{-2\tau}}
    ,
  \end{equation*}
  so that
  \begin{equation*}
    \int_{\RR}\int_{\RR}  N^{-2\tau} \frac{ | g'(s)   + g'(t)|^2}{|s-t|^2 + 4 N^{-2\tau}} ds dt
    \lesssim
    \|g'\|_2^2
  \end{equation*}
  and
  \begin{equation*}
    \int_{\RR}\int_{\RR}  \frac{|g(s) - g(t)|^2 }{|s-t|^2 + 4 N^{-2\tau}}  ds dt
    \lesssim
    \int_{\RR}\int_{\RR}  \frac{(g(s) - g(t))^2 }{(s-t)^2}  ds dt
    <
    \infty
    ,
  \end{equation*}
  and therefore the sequence $(V_N[g])_{N \in \NN}$ is bounded.
  Now we complete the proof using a standard argument.
  If we denote $\varphi_N(t):= \EE[\efr(t)]$, then by \eqref{eq:210} and the boundedness of $V_N[g]$, the sequences of functions $(\varphi_N)$ and $(\varphi_N')$ are uniformly bounded on $[-a,a]$ for any $a>0$.
  By Arzelà-Ascoli theorem, any subsequence of $(\varphi_N)$ has a subsequence that converges uniformly on $[-a,a]$ to a function $\varphi$.
  From the dominated convergence theorem,  $V_N[g]$ converges to $ V[g]$ as $N \to \infty$, and thus by integrating \eqref{eq:210}, taking the limit over the convergent subsequence and switching the limit and integration we find that $\varphi(t) = e^{-V[g] t^2/2}$.
  Since $\varphi(t)$ is the unique limit for all the converging subsequences, the entire sequence $(\varphi_N)$ converges to $\varphi$, and we have that for any $t \in \RR$
  \begin{equation*}
    \lim_{N\to \infty}\EE[\efr(t)]
    =
    e^{-\frac{t^2}{2}V[g]}
    .
  \end{equation*}
  Finally, by Lemma~\ref{lem:1}, we have the convergence of the characteristic function
  \begin{equation*}
    \lim_{N\to \infty}\EE[e(t)]
    =
    e^{-\frac{t^2}{2}V[g]}
    ,
  \end{equation*}
  and we finish the proof of Theorem~\ref{thm:main} for $\beta = 2$ by invoking  Lévy's continuity theorem.  
\end{proof}

\section{Real symmetric case}
\label{sec:proof-real}

In this section we collect the changes to the proof presented in Sections~\ref{sec:proof} that establish Theorem~\ref{thm:main} for $\beta = 1$.
In this case the model \eqref{eq:1} is constructed  with real i.i.d. blocks $\Xb_{\alpha}$, $1\leq \alpha \leq d$.

\subsection{Results that do not require changes}
\label{sec:proof-r1}

First we notice that all the results about the spectral properties of $\Hb$ stated in Section~\ref{sec:proof-1} hold independently of the symmetry class.
In particular, if the self-energy operator
\begin{equation*}
  \Gamma[R] := \sum_{\alpha = 1}^{d} L_{\alpha} R \,  L_{\alpha}^{\, t} + L_{\alpha}^{\,t} R \, L_{\alpha}  
\end{equation*}
satisfies the property \textbf{(A)} \eqref{eq:10}, then the local laws \eqref{eq:32}-\eqref{eq:33} hold for $\Hb^{(1)}$. 
Lemma~\ref{lem:1} remains valid for $\beta = 1$ without any changes in the proof.

The proof of Lemma~\ref{lem:3} relies on the general properties of the resolvent \eqref{eq:96}-\eqref{eq:97} and the local laws \eqref{eq:32}-\eqref{eq:33}, and therefore holds independently of the symmetry class and can be used for $\beta = 1$ as stated.
The differences in the proof are purely notational.
Since $L_{\alpha}$ and $\Xb_{\alpha}$ are real symmetric, the expression for $W_{ij}$ \eqref{eq:93} is replaced by
\begin{equation*}
  W_{ij}
  =
  \sum_{\alpha=1}^{d}\bigg(L_{\alpha}  \, x^{(\alpha)}_{ij} + L_{\alpha}^{\,t}  \, x^{(\alpha)}_{ji}\bigg)
  ,
\end{equation*}
and the identity \eqref{eq:105} is replaced by its real counterpart
\begin{equation*}
  \EE[x_{kp}^{(\alpha_1)}x_{ql}^{(\alpha_2)}]
  =
  \delta_{\alpha_1 \alpha_2}\delta_{kq} \delta_{pl} \frac{1}{N}
  .
\end{equation*}
The above identities are applied in \eqref{eq:102}-\eqref{eq:104} and do not affect the proof.
The sum in \eqref{eq:102} is taken over $p,q\neq l$, and for any $R\in \CC^{n\times n} $ we have
\begin{equation*}
  \EE_{l}[ W_{lp} R W_{ql}]
  =
  \frac{1}{N} \Gamma[R] \, \delta_{pq}
  .
\end{equation*}
Therefore, we end up with the same expression as in the complex case in \eqref{eq:103}.
The remainder of the proof does not need any changes.

Lemma~\ref{lem:4} establishes the properties of the operator $\Bc_{z,\zeta}$ that are independent of the symmetry class of the Kronecker model.

\subsection{Computing $\EE[\efr(t)]$ for $\beta = 1$}
\label{sec:proof-r2}

The derivation and analysis of the approximate equation for $\frac{d}{dt} \EE[\efr(t)]$ in the case of the Kronecker model with real i.i.d. blocks requires several important modifications compared to the complex case.
These modifications are needed to take into account the differences in the correlation structures of real and complex Kronecker models.
In order to compare the two cases, consider the operator $\Scc^{(\beta)}: \CC^{nN \times nN} \to \CC^{nN \times nN}$ given by
\begin{equation*}
  \Scc^{(\beta)} \big[ \Rb \big]
  :=
  \EE \Big[\Wb^{(\beta)} \Rb  \,  \Wb^{(\beta)} \Big]
\end{equation*}
for $\Rb \in \CC^{nN \times nN}$ and $\beta \in \{1,2\}$.
This operator appears naturally in the proof of Lemma~\ref{lem:2}, where it takes the form \eqref{eq:44}
\begin{equation*}
  \Scc^{(2)}[\Rb]
  =
  \Scc [\Rb]
  =
  \Gamma \Big[ \frac{1}{N}\sum_{j=1}^{N} R_{j j} \Big] \otimes  \Ib_N
  .
\end{equation*}
On the other hand, in the case of real i.i.d. blocks we have
\begin{align}
  \label{eq:223}
  \Scc^{(1)}[\Rb]
  =
  \Gamma \Big[ \frac{1}{N}\sum_{j=1}^{N} R_{j j} \Big] \otimes  \Ib_N +  \frac{1}{N}  \sum_{i,j=1}^{N} \Gammat \big[ R_{j i} \big] \otimes  \Eb_{ij}
  ,
\end{align}
where the operator $\Gammat: \CC^{n\times n} \to \CC^{n\times n}$ is given by
\begin{equation*}
  \Gammat \big[R\big]
  :=
  \sum_{\alpha = 1}^{d} \Big( L_{\alpha} R L_{\alpha} + L_{\alpha}^{\,t} R L_{\alpha}^{\,t}\Big)
  .
\end{equation*}
Notice that in the case when $L_{\alpha} = L_{\alpha}^{\, t}$ for all $\alpha \in \{1,\ldots, d\}$ (i.e., when $\Hb^{(1)}$ is constructed using real Wigner blocks) the operators $\Gamma $ and $\Gammat$ coincide.
In the general case the operators $\Gamma$ and $\Gammat$ are  different.

Moreover, applying operator $\Scc^{(1)}$ in the derivation of the equation for $\frac{d}{dt} \EE[\efr(t)]$ gives rise to two types of multiresolvent averages
\begin{equation}
  \label{eq:225}
  G^{B}(z,\zeta)
  :=
  \frac{1}{N}\sum_{k,l = 1}^{N} G_{l k}(z) B G_{k l}(\zeta)
  ,\quad
  \Gt^{B}(z,\zeta)
  :=
  \frac{1}{N}\sum_{k,l = 1}^{N} G_{l k}(z) B G_{l k}(\zeta)
  .
\end{equation}
The details of how these terms emerge when differentiation $\EE[\efr(t)]$ with respect to $t$, are presented in Lemma~\ref{lem:7} below.
The first quantity $G^B$ from \eqref{eq:225} was studied in Section~\ref{sec:proof-3} for $\beta = 2$, and the results of Lemma~\ref{lem:3}, Corollary~\ref{cor:1} and Lemma~\ref{lem:6} hold for $\beta = 1$ without any changes.
The second quantity $ \Gt^{B}$ on the other hand requires a new proof which we present below.
Notice also, that if $n = 1$ (i.e., $\Hb^{(1)}$ is a real Wigner matrix), then $G_{kl} = G_{lk}$ and thus the two quantities in \eqref{eq:225} are the same.
Therefore, in the case of one real  Wigner matrix the variance of the fluctuations of the linear spectral statistics follows almost immediately from the complex case using the fact that $\Gammat = \Gamma$ in \eqref{eq:223} and $ G^{B}(z,\zeta) = \Gt^{B}(z,\zeta)$.

We start by proving the approximation of  $\Gt^{B}(z,\zeta)$, which is analogous to part (i) of Corollary~\ref{cor:1} and Lemma~\ref{lem:6} for the $\beta = 2$ case.
\begin{lem}
  \label{lem:5}
  Let $\gamma \in (0,1)$,  $\tau \in (0,\min\{\gamma/2,1-\gamma \})$, and let $B \in \CC^{n\times n}$.
  Denote $\etaH := \min\{|\Im z|, |\Im \zeta|\}$.
\item[(i)] Uniformly on $\big(\Omega^{+}\times \Omega^{+} \big)\cup\big(\Omega^{-}\times \Omega^{-} \big)$ 
  \begin{equation}
    \label{eq:226}
    \EE\Big[\big\|\Gt^{B}(z,\zeta)\big\|\Big]
    =
    O_{\prec}\bigg( \big\| B \big\| \Big(1 + \frac{1}{N^{1/2} \etaH^{\,3/2}} \Big)\bigg)
    .
  \end{equation}
\item[(ii)] Uniformly on $\big(\Omega^{-}\times \Omega^{+} \big)\cup\big(\Omega^{+}\times \Omega^{-} \big)$ 
  \begin{equation}
    \label{eq:227}
    \Gt^{B}(z,\zeta)
    =
    \vartheta \,\frac{2\imu }{\la \Im M(E_{0}) \ra} \frac{1}{z - \zeta} \Im M(E_{0}) B^{\, t} \Im M(E_{0})  
    +
    \errS^{B}(z,\zeta)
  \end{equation}
  with
  \begin{equation}
    \label{eq:228}
    \EE\Big[ \big\| \errS^{B}(z,\zeta) \big\| \Big]
    =
   O_{\prec}\bigg( \big\| B \big\| \Big(1 + \frac{1}{N \etaH^{\,2}} +  \frac{1}{N^{1/2} \etaH^{\,3/2}} \Big)  \bigg)
  \end{equation}
  and $\vartheta = 1$ for $(z,\zeta) \in \Omega^{+}\times \Omega^{-}$ and $\vartheta = -1$ for $(z,\zeta) \in \Omega^{-}\times \Omega^{+}$.
\end{lem}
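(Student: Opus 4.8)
The proof mirrors the treatment of $G^{B}(z,\zeta)$ in Sections~\ref{sec:proof-3}--\ref{sec:proof-5}: first derive a self-consistent equation for $\Gt^{B}(z,\zeta)$ with a controlled random error, then analyze the associated stability operator, obtaining the analogues of Corollary~\ref{cor:1}(i) and Lemma~\ref{lem:6}. Throughout I would use the two structural features of the real symmetric case: since $\Hb^{(1)}$ is symmetric, $\Gb(\zeta)=\Gb(\zeta)^{t}$, so that $G_{lk}(\zeta)=(G_{kl}(\zeta))^{t}$ for all $k,l$; and, because $K_{0}$ and the self-energy $\Gamma$ from Section~\ref{sec:proof-r1} commute with transposition while the solution of the MDE \eqref{eq:7} is unique, $M(z)=M(z)^{t}$, so $\Im M(E_{0})$ is real symmetric and (strictly) positive definite.

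\emph{Step 1: self-consistent equation.} I would repeat the proof of Lemma~\ref{lem:3} up to the resolvent expansions \eqref{eq:99}--\eqref{eq:100}, but now expand \emph{both} copies of $G_{lk}$ in $G_{lk}(z)BG_{lk}(\zeta)$ (for $l\neq k$) so that the summation index $l$ sits on the same side in the randomness of each factor. Taking the partial expectation over $\{W_{lp}\}_{p}$ and using the real second-moment identities from Section~\ref{sec:proof-r1} --- which in this configuration read $\EE_{l}[W_{lp}RW_{lq}]=N^{-1}\delta_{pq}\Gammat[R]$, with the same-side operator $\Gammat$ of \eqref{eq:223} appearing in place of $\Gamma$ --- produces a multiresolvent average with a deterministic factor absorbed inside $\Gammat$; expanding $\Gammat$ over the coefficients $L_{\alpha}$ re-expresses this through the same family of quantities and yields the closed self-consistent system (equivalently, a single linear equation for the operator $B\mapsto\Gt^{B}(z,\zeta)$ on $\CC^{n\times n}$)
\begin{equation*}
  \Gt^{B}(z,\zeta)
  =
  M(z)BM(\zeta)
  +
  M(z)\sum_{\alpha=1}^{d}\Big( L_{\alpha}\,\Gt^{\,B M(\zeta)L_{\alpha}}(z,\zeta)+L_{\alpha}^{t}\,\Gt^{\,B M(\zeta)L_{\alpha}^{t}}(z,\zeta)\Big)
  +
  \errS^{B}(z,\zeta)
  ,
\end{equation*}
where $\errS^{B}$ is analytic and satisfies $\EE[\|\errS^{B}(z,\zeta)\|]\prec\|B\|\,N^{-1/2}\etaH^{-3/2}$ by the same fluctuation-averaging argument (control of the $D_{k}$-type term, high-moment bound) as in Lemma~\ref{lem:3}.

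\emph{Step 2: inversion of the stability operator.} Writing the equation of Step~1 as $(\Id-\mathcal{K}_{z,\zeta})[B\mapsto\Gt^{B}(z,\zeta)]=[B\mapsto M(z)BM(\zeta)]+[\text{error}]$ on $\mathrm{End}(\CC^{n\times n})$, with $(\mathcal{K}_{z,\zeta}\Psi)(B):=M(z)\sum_{\alpha}\bigl(L_{\alpha}\Psi(BM(\zeta)L_{\alpha})+L_{\alpha}^{t}\Psi(BM(\zeta)L_{\alpha}^{t})\bigr)$, one checks that $\mathcal{K}_{z,\zeta}$ leaves invariant each subspace $V_{Q}:=\{\,B\mapsto PB^{t}Q\,:\,P\in\CC^{n\times n}\,\}$, $Q\in\CC^{n\times n}$, and, using $M(\zeta)^{t}=M(\zeta)$, acts on $V_{Q}$ --- under the identification $P\leftrightarrow(B\mapsto PB^{t}Q)$ --- as $P\mapsto M(z)\Gamma[P]M(\zeta)$; hence $\Id-\mathcal{K}_{z,\zeta}|_{V_{Q}}$ is conjugate to $\Cc_{M(z),M(\zeta)}\Bc_{z,\zeta}$. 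Since $\mathrm{End}(\CC^{n\times n})=\bigoplus_{m}V_{Q_{m}}$ for any basis $\{Q_{m}\}$ of $\CC^{n\times n}$, Lemma~\ref{lem:4} applies blockwise: on $(\Omega^{+}\times\Omega^{+})\cup(\Omega^{-}\times\Omega^{-})$ this gives a bounded inverse, hence \eqref{eq:226}; on $(\Omega^{-}\times\Omega^{+})\cup(\Omega^{+}\times\Omega^{-})$ each block has a single small eigenvalue which near $(E_{0},E_{0})$ is a nonzero multiple of $z-\zeta$ (exactly \eqref{eq:154}), and analytic perturbation theory as in Lemma~\ref{lem:4} produces the decomposition \eqref{eq:227}. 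The form of the residue is pinned down by substituting the ansatz $\Gt^{B}(z,\zeta)\approx c(z-\zeta)^{-1}\Im M(E_{0})B^{t}\Im M(E_{0})$ into the equation of Step~1: the transpose twist collapses the sum to $\sum_{\alpha}(L_{\alpha}\Im M(E_{0})L_{\alpha}^{t}+L_{\alpha}^{t}\Im M(E_{0})L_{\alpha})=\Gamma[\Im M(E_{0})]$, and then \eqref{eq:149} and \eqref{eq:152} fix $c$ and the normalization $\vartheta\,2\imu/\la\Im M(E_{0})\ra$. Combining with the bound on $\errS^{B}$ yields \eqref{eq:227} with a preliminary error $O_{\prec}(\|B\|(1+N^{-1/2}\etaH^{-5/2}))$, which is upgraded to \eqref{eq:228} by the co-dimension-one splitting $B=B^{\circ}+\text{(multiple of }I_{n})$ of Lemma~\ref{lem:6} (Steps~1--2 there), now using the projection onto the one-dimensional kernel inside each $V_{Q}$ together with the resolvent-identity treatment of the analogue of $\Gt^{I_{n}}$.

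\emph{Main obstacle.} The delicate part is Step~2: the operator $\Gammat$ is not positivity preserving, so the Herglotz-based stability analysis of Lemma~\ref{lem:4} cannot be invoked directly on $\CC^{n\times n}$. The observation that makes everything go through is that conjugation by transposition turns $\Gammat$ into $\Gamma$; this produces the invariant splitting $\mathrm{End}(\CC^{n\times n})=\bigoplus_{m}V_{Q_{m}}$ and reduces the spectral problem on the enlarged space to the already-established properties of $\Bc_{z,\zeta}$. Verifying that the transpose-sandwich ansatz genuinely solves the homogeneous equation --- i.e.\ that $\Im M(E_{0})B^{t}\Im M(E_{0})$ spans the relevant kernel direction --- and carrying the bookkeeping of the auxiliary quantities $\Gt^{BM(\zeta)L_{\alpha}}$, $\Gt^{BM(\zeta)L_{\alpha}^{t}}$ through the error estimates is the main technical content.
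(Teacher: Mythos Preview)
Your overall strategy is correct and parallels the paper's: both reduce the enlarged stability operator to copies of $\Bc_{z,\zeta}$. The paper works in $\CC^{n\times n}\otimes\CC^{n\times n}$ via the vectorization isomorphism $\varphi$, derives an equation for $\frac{1}{N}\sum_{k,l}G_{lk}(z)\otimes G_{kl}(\zeta)$, and shows that the resulting stability operator $\Bch_{z,\zeta}$ satisfies $\Bch_{z,\zeta}=\Phi(\Bc_{z,\zeta}\otimes\Id)\Phi$ for an explicit involution $\Phi$. Your $V_{Q}$--decomposition of $\mathrm{End}(\CC^{n\times n})$ is exactly this block-diagonalization in different coordinates, and the self-consistent equation you write down is correct (after the same fluctuation estimate as in Lemma~\ref{lem:3}).

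The one place your sketch goes off is the upgrade from the preliminary error $O_{\prec}(\|B\|(1+N^{-1/2}\etaH^{-5/2}))$ to \eqref{eq:228}. The input-splitting $B=B^{\circ}+cI_{n}$ of Lemma~\ref{lem:6} does not help here: the small eigenvalue of the stability operator lives in the $P$-slot of each $V_{Q}$, not in the $B$-argument, so the condition $\langle\Im M_{0},B^{\circ}\rangle=0$ does not make the inverse bounded. And there is no resolvent identity for $\Gt^{I_{n}}$, since $\Gt^{I_{n}}(z,\zeta)$ is a partial trace of $\Gb(z)\Gb(\zeta)^{t_{n}}$ (partial transpose on the small factor), not of a genuine product of resolvents. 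What the paper does instead --- and what your own ``projection onto the one-dimensional kernel inside each $V_{Q}$'' would yield if you compute it --- is to observe that the projected part of $B\mapsto\Gt^{B}$ equals
\[
\frac{1}{n\|\Im M_{0}\|_{\mathrm{HS}}^{2}}\,\Im M_{0}\,B^{t}\,G^{\Im M_{0}}(z,\zeta),
\]
i.e.\ it reduces to the \emph{untilded} two-resolvent quantity $G^{\Im M_{0}}$, to which the already-proved Lemma~\ref{lem:6} applies verbatim (using the resolvent identity for $G^{I_{n}}$ inside it). That identification of the projected part with $G^{\Im M_{0}}$ is the missing ingredient in your Step~2; once inserted, the rest of your outline goes through.
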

\begin{proof}
  For convenience we  view  $n\times n$ matrices as elements of the vector space $\CC^{n}\otimes \CC^{n}$ with the two interpretations related by the isomorphism $\varphi : \CC^{n \times n} \to \CC^{n}\otimes \CC^{n}$ acting on the basis vectors $E_{ij} \in \CC^{n\times n}$ as $\varphi(E_{ij})  = e_{i}\otimes e_{j}$ for $i,j\in \{1,\ldots, n\}$.
  Then for any $A,B \in \CC^{n\times n}$ the linear operator $\CC^{n\times n} \ni R \mapsto A \, R \, B \in \CC^{n\times n}$ corresponds to the linear operator $A \otimes B^{\,t}: \CC^{n}\otimes \CC^{n} \to \CC^{n}\otimes \CC^{n}$ in the sense that
  \begin{equation}
    \label{eq:229}
    \varphi \big(A R B \big)
    =
    A \otimes B^{\,t}\, \varphi (R)
    ,
  \end{equation}
  where as before $\otimes$ denotes the tensor (Kronecker) product and $A \otimes B^{\,t}$ is an $n^2 \times n^2$ matrix.
  Using the above isomorphism we can write $\Gt^{B}(z,\zeta)$ as
  \begin{equation}
    \label{eq:230}
    \varphi\Big(\Gt^{B}(z,\zeta) \Big)
    =
    \Big[\frac{1}{N}\sum_{k,l=1}^{N} G_{lk}(z) \otimes (G_{lk}(\zeta))^{t}\Big] \varphi(B)
    .
  \end{equation}
  Notice that for all $k,l \in \{1,\ldots, N\}$ the symmetry of $\Wb$ implies that $W_{l k}^{\, t} = W_{kl}$, $\big(G_{ l k}(z) \big)^{t} = G_{ k l }(z)$,
  and thus $G_{lk}(z) \otimes (G_{lk}(\zeta))^{t} = G_{lk}(z) \otimes G_{kl}(\zeta)$.
  Moreover, the solution of the Dyson equation \eqref{eq:7} in the case of real symmetric matrices $L_{\alpha}$ is also real symmetric, $M(z) = (M(z))^{t}$.
  
  In the first part we derive an approximate equation for $\Gt^{B}(z,\zeta)$.
  The proof is similar to the proof of Lemma~\ref{lem:3} and relies on the general resolvent identities \eqref{eq:99}-\eqref{eq:100} and the local laws \eqref{eq:32}-\eqref{eq:33}.
  Fix $k\in \{1,\ldots, N\}$ and recall that $\etaH := \min\{|\Im z|, |\Im \zeta|\}$.
  From \eqref{eq:99}, \eqref{eq:100} and \eqref{eq:101} we have that for any $l\neq k$ uniformly on $(z,\zeta) \in \Omega \times \Omega$ 
  \begin{align*}
    G_{lk}(z) \otimes G_{kl}(\zeta)
    &=
      M(z) \sum_{p \neq l} W_{ l p } G^{(l)}_{ p k}(z)   \otimes \sum_{q \neq l}  G^{(l)}_{kq}(\zeta)  W_{ql } M(\zeta)    + O_{\prec}\Big(\frac{1 }{(N\etaH)^{\,3/2}}\Big)
      .
  \end{align*}
  By taking the partial expectation $\EE_{l}$ we get
  \begin{align*}
    &G_{lk}(z) \otimes G_{kl}(\zeta)
     =
      M(z) (1-\EE_{l})\Big[ G_{lk}(z) \otimes G_{kl}(\zeta) \Big] M(\zeta)
      + O_{\prec}\Big(\frac{1 }{(N\etaH)^{\,3/2}}\Big) 
    \\
    & \qquad \qquad
      + M(z)\frac{1}{N} \sum_{p \neq l} \sum_{\alpha = 1}^{d}  \Big[ L_{\alpha} G^{(l)}_{ p k}(z)   \otimes  G^{(l)}_{kp}(\zeta) L_{\alpha}^{\,t} +  L_{\alpha}^{\,t} G^{(l)}_{ p k}(z)   \otimes  G^{(l)}_{ kp}(\zeta) L_{\alpha} \Big] M(\zeta)
      .
  \end{align*}
  Applying \eqref{eq:107}, \eqref{eq:112} and summing the above equality over $l\in \{1,\ldots, N\}$ yields 
  \begin{align}
    \label{eq:234}
    &\sum_{l = 1}^{N} G_{lk}(z) \otimes G_{kl}(\zeta)
    =
      M(z) \otimes M(\zeta)
    \\ \nonumber
    & \qquad
    + M(z)\sum_{p =1}^{N} \sum_{\alpha = 1}^{d}  \Big[ L_{\alpha} G_{ p k}(z)   \otimes  G_{kp}(\zeta) L_{\alpha}^{\,t} +  L_{\alpha}^{\,t} G_{ p k}(z)   \otimes  G_{ kp}(\zeta) L_{\alpha} \Big] M(\zeta)
    \\ \nonumber
    & \qquad
      +  M(z) \sum_{l: l\neq k} (1-\EE_{l})\Big[ G_{lk}(z) \otimes G_{kl}(\zeta) \Big]M(\zeta)
    + O_{\prec}\Big(\frac{1 }{N^{1/2}\etaH^{\,3/2}}\Big)
    .
  \end{align}
  We multiply the left tensor factor in \eqref{eq:234} from the left by $M^{-1}(z)$ and the right tensor factor from the right by  $M^{-1}(\zeta)$.
  By averaging over $k\in \{1,\ldots, N\}$ and repeating the proof in Lemma~\ref{lem:3} we show that the error term 
  \begin{multline}
    \label{eq:235}
    \widetilde{\errE} (z,\zeta)
    :=
      \frac{1}{N}\sum_{k,l = 1}^{N} \frac{1}{M(z)}G_{lk}(z) \otimes G_{kl}(\zeta) \frac{1}{M(\zeta)}
    \\ 
      -
      I_{n} \otimes I_{n}
      - \frac{1}{N}\sum_{k,p =1}^{N} \sum_{\alpha = 1}^{d}  \Big[ L_{\alpha} G_{ p k}(z)   \otimes  G_{kp}(\zeta) L_{\alpha}^{\,t} +  L_{\alpha}^{\,t} G_{ p k}(z)   \otimes  G_{ kp}(\zeta) L_{\alpha} \Big]
  \end{multline}
  is analytic on $\Omega \times \Omega$ and satisfies the bound
  \begin{equation}
    \label{eq:236}
    \EE[\| \widetilde{\errE}(z,\zeta) \|]
    \prec
    \frac{1}{N^{1/2} \etaH^{\,3/2}}
    .
  \end{equation}

  Now  we  rewrite equation \eqref{eq:235} as
  \begin{equation}
    \label{eq:237}
    \Bch_{z,\zeta}\Big[\frac{1}{N}\sum_{k,l = 1}^{N} G_{l k}(z) \otimes G_{kl} (\zeta) \Big]
    =
    I_n \otimes I_n + \widetilde{\errE} (z,\zeta)
    ,
  \end{equation}
  where the linear operator $\Bch_{z,\zeta} : \CC^{n\times n}\otimes \CC^{n\times n} \to \CC^{n\times n} \otimes \CC^{n\times n}$ is given by
  \begin{equation*}
    \Bch_{z,\zeta}\big[A \otimes B \big]
    =
    \frac{1}{M(z)}A \otimes B \frac{1}{M(\zeta)} - \sum_{\alpha = 1}^{d} \Big[ L_{\alpha} A \otimes B L_{\alpha}^{\,t} + L_{\alpha}^{\,t} A \otimes B L_{\alpha} \Big]
  \end{equation*}
  for all $A\otimes B \in \CC^{n\times n} \otimes \CC^{n\times n}$.
 To show the invertibility of $\Bch_{z,\zeta}$ we define the linear operator $\Phi: \CC^{n\times n}\otimes \CC^{n\times n} \to \CC^{n\times n}\otimes \CC^{n\times n}$ acting on the basis vectors $E_{ij} \otimes E_{kl}$ as
  \begin{equation*}
    \Phi\big[E_{ij} \otimes E_{kl}\big]
    =
    E_{il} \otimes E_{kj}
    .
  \end{equation*}
  The operator $\Phi$ is an involution, $\Phi^2 = \Id_n \otimes \Id_n$.
  For any vectors $a,b,c,d \in \CC^{n}$ it satisfies $\Phi\big[ a b^{\,t} \otimes c d^{\,t} \big] = a d^{\,t} \otimes  c b^{\,t}$, which in composition with $\Bch_{z,\zeta}$ gives
  \begin{align*}
    \Phi \Bch_{z,\zeta}\big[ E_{ij} \otimes E_{kl} \big]
    &=
      \Phi \bigg[ \frac{1}{M(z)} E_{ij} \otimes E_{kl} \frac{1}{M(\zeta)} - \sum_{\alpha = 1}^{d} \Big[ L_{\alpha} E_{ij} \otimes E_{kl} L_{\alpha}^{\,t} + L_{\alpha}^{\,t} E_{ij} \otimes E_{kl} L_{\alpha} \Big] \bigg]
    \\
    &=
      \frac{1}{M(z)} E_{il} \frac{1}{M(\zeta)} \otimes E_{kj}  - \sum_{\alpha = 1}^{d} \Big[ L_{\alpha} E_{il} L_{\alpha}^{\,t}  \otimes E_{kj} + L_{\alpha}^{\,t} E_{il} L_{\alpha} \otimes E_{kj}  \Big]
    \\
    &
      =
      \big( \Bc_{z,\zeta} \otimes \Id \big) \Phi\Big[ E_{ij} \otimes E_{kl}\Big]
      .
  \end{align*}
  We see that $\Bch_{z,\zeta} = \Phi \big( \Bc_{z,\zeta} \otimes \Id \big) \Phi$, which means that the invertibility of $\Bch_{z,\zeta}$ is equivalent to the invertibility of $\Bc_{z,\zeta}$.
  By Lemma~\ref{lem:4}, the operator $\Bch_{z,\zeta}$ is invertible for $(z,\zeta) \in \Omega \times \Omega$ and
  \begin{equation}
    \label{eq:243}
    \Bch_{z,\zeta}^{-1}
    =
    \Phi \, \big( \Bc_{z,\zeta}^{-1} \otimes \Id \big) \, \Phi
    .
  \end{equation}
  Applying now $\Bch_{z,\zeta}^{-1}$ on both sides of \eqref{eq:237} yields 
  \begin{equation}
    \label{eq:244}
    \frac{1}{N}\sum_{k,l = 1}^{N} G_{l k}(z) \otimes G_{kl}(\zeta)
    =
    \Phi \big( \Bc_{z,\zeta}^{-1} \otimes \Id \big) \Phi \Big[I_n \otimes I_n\Big] +       \Phi \big( \Bc_{z,\zeta}^{-1} \otimes \Id \big) \Phi \Big[\widetilde{\errE} (z,\zeta)\Big]
    .
  \end{equation}
  After applying $\varphi^{-1}$ in \eqref{eq:230}, the estimate \eqref{eq:226} follows from the error bound \eqref{eq:236} and the boundedness of $\Bc_{z,\zeta}$ given in \eqref{eq:122}.

  The direct application of \eqref{eq:124} to \eqref{eq:244} results in an error term that is assymptotically too large on certain mesoscopic scales, and therefore is insufficient to prove the estimate \eqref{eq:227}-\eqref{eq:228}.
  In order to resolve this problem, consider the operator $\widehat{\Qc}: \CC^{n\times n} \otimes \CC^{n\times n} \to \CC^{n\times n} \otimes \CC^{n\times n}$ given by
\begin{equation*}
  \widehat{\Qc}
  :=
  \Phi \,\big(\Qc \otimes \Id_n \big) \, \Phi
  ,
\end{equation*}
where $\Qc$ was defined in \eqref{eq:167}.
Since $\Phi$ is an involution, we have
\begin{equation*}
  \Id_n \otimes \Id_n - \widehat{\Qc}
  =
  \Phi \, \big((\Id_n - \Qc) \otimes \Id_n \big) \, \Phi
  ,
\end{equation*}
which together with \eqref{eq:243} and \eqref{eq:124} implies that
\begin{equation}
  \label{eq:247}
  \Big(\Id_n \otimes \Id_n - \widehat{\Qc}\Big) \Bch_{z,\zeta}^{-1}
  =
  \Phi \, \big( \Jc_{z,\zeta}\otimes \Id_n \big) \, \Phi
  .
\end{equation}
Now, by using \eqref{eq:247} and \eqref{eq:237}, we decompose the left-hand side of \eqref{eq:244} as
\begin{align}
  \label{eq:248}
  \frac{1}{N}\sum_{k,l = 1}^{N} G_{l k}(z) \otimes G_{kl}(\zeta)
    & =
      \widehat{\Qc}\,\Big[ \frac{1}{N}\sum_{k,l = 1}^{N} G_{l k}(z) \otimes G_{kl}(\zeta) \Big]
  \\
    & \qquad
      + \Phi \big( \Jc_{z,\zeta}\otimes \Id_n \big) \Phi \,\Big[ I_n \otimes I_n + \widetilde{\errE}(z,\zeta) \Big]
    .
\end{align}
The last term in \eqref{eq:248}, after evaluating it at $\varphi(B)$, applying $\varphi^{-1}$ and taking the expectation, is of order $O_{\prec}(\| B \| (1 + N^{-1/2} \etaH^{\,-3/2}))$ and analytic on $(z,\zeta) \in (\Omega^{+}\times \Omega^{-})\cup (\Omega^{-}\times \Omega^{+})$.

It remains to estimate
\begin{equation}
  \label{eq:249}
  \varphi^{-1}\bigg(\widehat{\Qc}\,\Big[ \frac{1}{N}\sum_{k,l = 1}^{N} G_{l k}(z) \otimes G_{kl}(\zeta) \Big] \varphi\big(B \big) \bigg)
  .
\end{equation}
Denote $M_0 := \lim_{y \downarrow 0} M(E_0 + \imu y)$ for brevity, and denote by $\{\ell_{k}, 1 \leq k \leq n \}$ the collection of (non-normalized) eigenvectors of $\Im M_0$, so that $\Im M_0 = \sum_{k = 1}^{n} \ell_k \ell_k^{t}$.
  Here we used that $   \Im M_0 $ is real symmetric in the case $\beta=1$.
  Indeed, $\Gamma[R]^t = \Gamma[R^t]$ for any $R \in \CC^{n \times n}$, therefore,  $M(z)^t$ also  satisfies the  Dyson equation, and by the uniqueness of the solution to the Dyson equation $M(z) = M(z)^t$.
  For any $S = (s_{ij})\in \CC^{n\times n}$ and $T = (t_{\alpha \beta})\in \CC^{n\times n}$ we have
\begin{align*}
  \widehat{\Qc}\,\Big[ S \otimes T \Big]
  & =
    \sum_{i,j,\alpha,\beta = 1}^{n} s_{ij} t_{\alpha \beta} \, \Phi \, \big(\Qc \otimes \Id_n \big) \, \Phi\,\Big[ E_{ij} \otimes E_{\alpha \beta} \Big]
  \\
  & =
    \sum_{i,j,\alpha,\beta = 1}^{n} s_{ij} t_{\alpha \beta} \, \Phi \, \big(\Qc \otimes \Id_n \big)\,\Big[ E_{i \beta} \otimes E_{\alpha j} \Big]
  \\
  & =
    \frac{1}{\| \Im M_0 \|_{\textrm{HS}}^2}\sum_{i,j,\alpha,\beta, m = 1}^{n} s_{ij} t_{\alpha \beta} \, \la \Im M_0, E_{i \beta} \ra \, \Phi \,\Big(\ell_m \ell_m^{t} \otimes E_{\alpha j} \Big)
  \\
  & =
   \frac{1}{n \| \Im M_0 \|_{\textrm{HS}}^2} \sum_{i,j,\alpha,\beta,m} s_{ij} t_{\alpha \beta} \,  \Big(\ell_m e_j^{t}\otimes e_{\alpha} e_{\beta}^t\Im M_0 \, e_{i}  \, \ell_m^{t}  \Big)
  \\
  & =
   \frac{1}{n \| \Im M_0 \|_{\textrm{HS}}^2} \sum_{i,j,m} s_{ij}  \,  \Big(\ell_m e_j^{t}\otimes T \Im M_0 \, e_{i}  \, \ell_m^{t}  \Big)
    .
\end{align*}
After evaluating the above operator at $\varphi(B)$ and taking $\varphi^{-1}$, we get
\begin{align*}
  \varphi^{-1}\bigg(\widehat{\Qc}\,\Big[ S \otimes T \Big] \varphi\big(B \big) \bigg)
  & =
    \frac{1}{n \| \Im M_0 \|_{\textrm{HS}}^2} \sum_{i,j,m = 1}^{n}  s_{ij}  \, \varphi^{-1}\bigg(  \,  \Big(\ell_m e_j^{t}\otimes T \Im M_0 e_{i}  \, \ell_m^{t}  \Big) \varphi\big(B \big) \bigg)
  \\
  & =
    \frac{1}{n \| \Im M_0 \|_{\textrm{HS}}^2} \sum_{i,j,m = 1}^{n}  s_{ij}  \, \ell_m e_j^{t} B   \ell_m e_{i}^{t} \Im M_0 T^t
  \\
  & =
   \frac{1}{n \| \Im M_0 \|_{\textrm{HS}}^2} \sum_{i,j,m = 1}^{n}  s_{ij}  \, \ell_m  \ell_m^{t} B^{t} e_j   e_{i}^{t} \Im M_0 T^t
  \\
  &=
    \frac{\Im M_0 \, B^t \, S^t \, \Im M_0 \,T^t}{ n \| \Im M_0 \|_{\textrm{HS}}^2} 
    ,
\end{align*}
where in the second step we used the definition of $\varphi$ in \eqref{eq:229}.

Now we apply the above formula with $S = G_{lk}(z)$ and $T = G_{kl}(\zeta)$, and use that $(G_{kl})^{t} = G_{lk}$ together with the linearity of $\varphi$ and $\widehat{\Qc}$, to rewrite \eqref{eq:249} as
\begin{align*}
  \varphi^{-1}\bigg(\widehat{\Qc}\,\Big[ \frac{1}{N}\sum_{k,l = 1}^{N} G_{l k}(z) \otimes G_{kl}(\zeta) \Big] \varphi\big(B \big) \bigg)
  & =
    \frac{\Im M_0}{n \| \Im M_0 \|_{\textrm{HS}}^2}  \,  B^{t} \frac{1}{N}\sum_{k,l = 1}^{N} G_{kl}(z) \Im M_0  G_{lk}(\zeta)
  \\
  & =
    \frac{\Im M_0}{n \| \Im M_0 \|_{\textrm{HS}}^2}  \,  B^{t} \,  G^{\Im M_0} (z,\zeta) 
  .
\end{align*}
Lemma~\ref{lem:6} implies that
\begin{equation*}
  \frac{1}{n \| \Im M_0 \|_{\textrm{HS}}^2} \Im M_0 \,  B^{t} \,  G^{\Im M_0} (z,\zeta) 
  =
  \vartheta \,\frac{2\imu }{z - \zeta } \frac{\Im M_0 B^{\,t} \Im M_{0} }{n \la \Im M_{0} \ra}    + \errR^{B}(z,\zeta)
  ,
\end{equation*}
  where $\EE[ \| \errR^{B}(z,\zeta) \| ] = O_{\prec}\Big( 1 + N^{-1} \etaH^{\, -2} + \| B \| \big(1 + N^{-1/2} \etaH^{\,-3/2} \big)  \Big)$, and $\vartheta = 1$ for $(z,\zeta) \in \Omega^{+}\times \Omega^{-}$ and $\vartheta = -1$ for $(z,\zeta) \in \Omega^{-}\times \Omega^{+}$.
  This, together with the estimate of the last term in \eqref{eq:248}, completes the proof of the lemma.
\end{proof}
Now we state and prove the analog of Lemma~\ref{lem:2} for $\beta = 1$.
\begin{lem}
  \label{lem:7}
  Let $\gamma \in (0,1)$ and $\tau \in \big(0, \min\{\gamma, (1-\gamma)\}/7 \big)$.
  Then 
  \begin{equation}
    \label{eq:253}
    \frac{d}{dt} \EE[\efr(t)]
    =
    - \frac{t}{\pi^2} \int\displaylimits_{\Omega \times \Omega}  \frac{\partial \ft(z)}{\partial \overline{z}}  \frac{\partial \ft(\zeta)}{\partial \overline{\zeta}} \frac{\partial}{\partial \zeta} \EE\Big[  \sum_{i,j=1}^{n}  \Tr \Big(E_{ji} \,(S_{ij} + \ST_{ij}) \Big)  \efr(t) \Big] \dzeta  \dz + \errT
  \end{equation}
  where $|\errT |\prec N^{-\tau}$, and
  \begin{align}
    \label{eq:254}
    S_{ij}(z,\zeta)
    :=
    \frac{1}{N} \sum_{k,l = 1}^{N}  G_{l k}(z)  \frac{1}{M(z)} \Bc_{z}^{-1}[I_n ] \, \Gamma [E_{ij} ]   G_{k l}(\zeta)
    ,
    \\
    \label{eq:255}
    \ST_{ij}(z,\zeta)
    :=
    \frac{1}{N} \sum_{k,l = 1}^{N}  G_{l k}(z)  \frac{1}{M(z)} \Bc_{z}^{-1}[I_n ] \, \Gammat [E_{ij} ]   G_{lk}(\zeta)
    .
  \end{align}
\end{lem}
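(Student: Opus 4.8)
The plan is to follow the derivation of Lemma~\ref{lem:2} step by step, carrying along the additional piece of the self-energy operator that is present in the real model. The argument begins exactly as in \eqref{eq:54}: differentiate $\efr(t)$ in $t$, use the resolvent identities \eqref{eq:55}, \eqref{eq:56} together with the cumulant expansion \eqref{eq:41}, whose real-random-variable version (with the same error bounds \eqref{eq:42}) applies verbatim for $\beta=1$. The only difference is that every partial expectation of a quantity quadratic in $\widetilde{\Wb}$ now produces the operator $\Scc^{(1)}$ of \eqref{eq:223} instead of $\Scc=\Scc^{(2)}$. Writing $\Scc^{(1)}[\Rb] = \Scc[\Rb] + \frac{1}{N}\sum_{i,j}\Gammat[R_{ji}]\otimes\Eb_{ij}$, I would show that the first summand reproduces the complex-case computation line by line, giving the term $S_{ij}(z,\zeta)$ of \eqref{eq:254}, while the second (transposed) summand contributes the genuinely new term $\ST_{ij}(z,\zeta)$ of \eqref{eq:255}, with all remaining discrepancies absorbed into $\errT$.

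First I would treat the places where $\Scc^{(1)}$ enters the derivation of the stability equation, i.e.\ the analogs of \eqref{eq:57}, \eqref{eq:62}, \eqref{eq:65}, \eqref{eq:66}. There only the $\Gamma$-part of $\Scc^{(1)}$ survives up to negligible errors: one computes $\Scc^{(1)}[\Mb] = \Scc[\Mb] + \frac{1}{N}\Gammat[M]\otimes\Ib_N$, so $\Mb$ still solves \eqref{eq:46} up to an $O(1/N)$ term, and $(\Id_n\otimes\Tr_N)\Scc^{(1)}[\Rb] = (\Gamma+\frac{1}{N}\Gammat)[(\Id_n\otimes\Tr_N)\Rb]$, so the operator $\Bc_z$ of \eqref{eq:48} is merely perturbed by the $O(1/N)$ operator $-\frac{1}{N}\Gammat$, which preserves the uniform boundedness of the inverse for $N$ large. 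All such $1/N$ corrections, after multiplication by $\ft$ and an application of Stokes' theorem \eqref{eq:85}--\eqref{eq:86} exactly as in the chain \eqref{eq:87}, are dominated by the terms already present there and hence by $N^{-\tau}$. The error terms $\errC$, $\errH$, $\errL$ themselves are bounded precisely as in the proof of Lemma~\ref{lem:2}, using the local laws \eqref{eq:32}--\eqref{eq:33} (valid for $\Hb^{(1)}$ by Section~\ref{sec:proof-r1}) and the restriction $\tau\in(0,\min\{\gamma,(1-\gamma)\}/7)$.

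The core of the argument is the step corresponding to \eqref{eq:71}--\eqref{eq:80}: once $\nabla_{\widetilde{\Wb}}(\efr(t))$ has introduced the second resolvent $\Gb(\zeta)$, taking the partial expectation $\EE_{\widetilde{\Wb}}$ of the product of two traces via \eqref{eq:363} produces $\Scc^{(1)}[E_{ij}\otimes\Eb_{pq}]$ sandwiched between $\Gb(z)$ and $\Gb(\zeta)$, where from \eqref{eq:223}
\begin{equation*}
  \Scc^{(1)}[E_{ij}\otimes\Eb_{pq}]
  =
  \delta_{pq}\,\frac{1}{N}\,\Gamma[E_{ij}]\otimes\Ib_N
  +
  \frac{1}{N}\,\Gammat[E_{ij}]\otimes\Eb_{qp}
  .
\end{equation*}
The first term is precisely \eqref{eq:79}, so the index bookkeeping already carried out in \eqref{eq:78}--\eqref{eq:80} yields $\sum_{i,j}\Tr(E_{ji}\,S_{ij}(z,\zeta))$. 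For the second term the transposed Kronecker factor $\Eb_{qp}$ replaces $\Eb_{pp}$, which changes the contraction of the $N\times N$ indices: tracking $\Eb_{qp}\Eb_{kl}\Eb_{qp}\Eb_{k'l'}$ forces $k=k'=p$ and $l=l'=q$, so that both resolvent coefficients carry the \emph{same} index pair and one is left with $\frac{1}{N}\sum_{k,l}G_{lk}(z)\frac{1}{M(z)}\Bc_z^{-1}[I_n]\Gammat[E_{ij}]G_{lk}(\zeta)=\ST_{ij}(z,\zeta)$, a multiresolvent average of the $\Gt^{B}$-type of \eqref{eq:225} rather than the $G^{B}$-type. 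Reassembling the two contributions with the prefactor from \eqref{eq:73}--\eqref{eq:74} produces \eqref{eq:253}--\eqref{eq:255}.

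I expect this index bookkeeping for the transposed term to be the only genuinely new point, and hence the main obstacle: one must verify carefully that the replacement $\Eb_{pp}\mapsto\Eb_{qp}$ in $\Scc^{(1)}$ collapses the two resolvents appearing in $\ST_{ij}$ to the common index order $(l,k)$ -- which is exactly what makes $\Gt^{B}$, and therefore the object studied in Lemma~\ref{lem:5}, enter. Everything else (the cumulant expansion, the Dyson-equation manipulation, and the Stokes'-theorem estimates of the error terms) is a transcription of the $\beta=2$ argument, with the $O(1/N)$ differences between $\Scc^{(1)}$ and $\Scc^{(2)}$ swept into $\errT$; in the special case $n=1$ one has $\Gammat=\Gamma$ and $\ST_{ij}=S_{ij}$, so \eqref{eq:253} reduces to \eqref{eq:52} up to a relabeling, consistent with the discussion preceding Lemma~\ref{lem:5}.
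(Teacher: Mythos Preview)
Your overall architecture is right, and the identification of the new leading term $\ST_{ij}$ via $\Scc^{(1)}[E_{ij}\otimes\Eb_{pq}]=\delta_{pq}N^{-1}\Gamma[E_{ij}]\otimes\Ib_N+N^{-1}\Gammat[E_{ij}]\otimes\Eb_{qp}$ and the index bookkeeping that turns $\Eb_{qp}$ into the $\Gt^{B}$-type average are correct and match the paper.

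However, there is a genuine gap in your treatment of the stability-equation part. You claim that in the analogs of \eqref{eq:57}--\eqref{eq:66} the $\Scct$-contribution is an $O(1/N)$ correction, justified by $\Scc^{(1)}[\Mb]=\Scc[\Mb]+O(1/N)$ and $(\Id_n\otimes\Tr_N)\Scc^{(1)}=(\Gamma+\tfrac{1}{N}\Gammat)(\Id_n\otimes\Tr_N)$. These two identities are true, but they do not cover the $\errH$-type term. If you keep $\Scc^{(1)}$ throughout, the analog of \eqref{eq:63} is $\errH^{(1)}=-\EE\big[\efr\,(1-\EE)[\Scc^{(1)}[\Mb-\Gb](\Mb-\Gb)]\big]$, and its $\Scct$-part, after the partial trace, equals
\[
\frac{1}{N}\sum_{k,l}\Gammat[G_{lk}(z)]\,G_{lk}(z)
=\sum_{\alpha}\big(L_{\alpha}\,\Gt^{L_{\alpha}}(z,z)+L_{\alpha}^{t}\,\Gt^{L_{\alpha}^{t}}(z,z)\big)
\]
up to harmless diagonal corrections. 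This is not $O(1/N)$: $\Scct[\Rb]$ depends on all off-diagonal blocks $R_{ji}$, so the averaged-local-law argument of \eqref{eq:83} (which relies on $\Scc[\Rb]=\Gamma[\bar R]\otimes\Ib_N$ with $\bar R=\tfrac{1}{N}\sum_j R_{jj}$) simply does not transfer. A naive entrywise bound gives only $O_\prec(\eta^{-1})$, which after Stokes contributes $O_\prec(N^{\tau})$ and does not vanish.

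The paper handles exactly this point differently: it splits $\Scc^{(1)}=\Scc+\Scct$ at the level of $\Scc^{(1)}[\Gb]\Gb$ and defines the whole piece $\errHt:=-\EE[\efr\,(1-\EE)[\Scct[\Gb]\Gb]]$ as an additional error. Its partial trace is precisely the $\Gt^{B}(z,z)$ expression above, and the needed bound $O_\prec(1+N^{-1/2}|\Im z|^{-3/2})$ comes from Lemma~\ref{lem:5}(i) (same half-plane), after which Stokes gives $|\errKt|\prec N^{3\tau/2}(N\eta_0)^{-1/2}\prec N^{-\tau}$. So the missing ingredient in your proposal is an appeal to Lemma~\ref{lem:5} to control the $\Scct$-part of the error in the stability equation; without it your ``$O(1/N)$'' claim for that term is false.
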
 
\begin{proof}
  Recall that the operator $\Scc^{(1)}$ was defined in \eqref{eq:223}.
  We keep the notation $\Scc[\Rb] = \Gamma \Big[ \frac{1}{N}\sum_{j=1}^{N} R_{j j} \Big] \otimes  \Ib_N$ as in Section~\ref{sec:proof}, and denote $\Scct[\Rb]:= \frac{1}{N}\sum_{i,j = 1}^{N} \Gammat[R_{ji}]\otimes \Eb_{ij}$, so that
  \begin{equation}
    \label{eq:256}
    \Scc^{(1)}
    =
    \Scc + \Scct
    .
  \end{equation}
  The lines \eqref{eq:54}-\eqref{eq:60} are repeated exactly as in the proof of Lemma~\ref{lem:2} with $\Scc$ replaced by $\Scc^{(1)}$.
  Then  we apply the decomposition \eqref{eq:256} on the right-hand side of
  \eqref{eq:62} and treat the part containing $\Scct$ as an additional error term.
  We end up with the equation
  \begin{align*}
    -\EE\Big[\efr(t)\big(1-\EE\big)\big[\Scc^{(1)}[\Gb(z)]\Gb(z)\big] \Big]
    =
      \EE\Big[\efr(t)\Big(\frac{1}{\Mb(z)} + z\Ib_{nN} - \Kb_{0}\Big) \big(1-\EE\big)\Gb(z) \Big]
    \\
     -\EE\Big[\efr(t)\big(1-\EE\big)\big[\Scc[\Gb(z)]\Mb(z)\big] \Big]
      + \errH(z,\zeta) + \errHt(z,\zeta)
      ,
  \end{align*}
  where $\errH$ is defined in \eqref{eq:63} and the second error term above is given by
  \begin{equation*}
    \errHt(z,\zeta)
    :=
    -
    \EE\Big[\efr(t)\big(1-\EE\big)\big[\Scct[\Gb(z)]\Gb(z)\big] \Big]
    .
  \end{equation*}
  After using \eqref{eq:59} (with $\Scc^{(1)}$ instead of $\Scc$) and \eqref{eq:55} we obtain the same equation as in \eqref{eq:65} with an additional error term
  \begin{equation}
    \label{eq:259}
    \Bcc_z \, \EE\Big[\efr(t)\big(1-\EE\big)\big[\Gb\big] \Big]
    =
    -\EE\Big[ \widetilde{\Wb}  \Gb \, \nabla_{\widetilde{\Wb}} \big(\efr(t)\big)\Big] \frac{1}{\Mb }
    - \Big(\errC + \errH + \errHt  \Big) \frac{1}{\Mb }
    .
  \end{equation}
  We see that the operator $\Scct$ does not explicitly appear in leading expressions of \eqref{eq:259} anymore and the terms have exactly the same form as in \eqref{eq:65}.
  Therefore, we repeat \eqref{eq:65}-\eqref{eq:76} in the proof of Lemma~\ref{lem:2} line by line with the error term $\errH$ replaced by $\errH + \errHt$.
  The operator $\Scc^{(1)}$  reappears again when we repeat the computations in \eqref{eq:77}-\eqref{eq:78}.
  After taking the partial expectation with respect to $\Wbt$, the operator $\Scc$ in \eqref{eq:78} is replaced by $\Scc^{(1)} = \Scc + \Scct$.
  More precisely, we have 
  \begin{align*}
    &\EE\Big[ \Tr \Big( \Bcc_{z}^{-1}\Big[ \widetilde{\Wb} \Gb(z)  \frac{1}{\Mb(z)} \Big] \Big) \Tr \Big(\widetilde{\Wb} \Gb(\zeta) \Big)  \efr(t) \Big]
    \\
    &
      =\EE\Big[ \sum_{i,j=1}^{n}\sum_{p,q=1}^{N} \Tr \Big((E_{ji}\otimes \Eb_{q p}) \Gb(z)  \frac{1}{\Mb(z)} \big((\Bcc_{z}^{*})^{-1}[\Ib_{nN} ]\big)^*  \Scc^{(1)}[E_{ij}\otimes \Eb_{p q} ] \,\Gb(\zeta) \Big)  \efr(t) \Big]
      .
  \end{align*}
  By applying the decomposition \eqref{eq:256} and using \eqref{eq:80}) we rewrite the term containing $\Scc$ as
  \begin{align*}
    \label{eq:261}
    &\EE\Big[ \sum_{i,j=1}^{n}\sum_{p,q=1}^{N} \Tr \Big((E_{ji}\otimes \Eb_{q p}) \Gb(z)  \frac{1}{\Mb(z)} \big((\Bcc_{z}^{*})^{-1}[\Ib_{nN} ]\big)^*  \Scc[E_{ij}\otimes \Eb_{p q} ] \,\Gb(\zeta) \Big)  \efr(t) \Big]
    \\
    &\qquad\qquad\qquad\qquad
    =
    \EE\Big[  \sum_{i,j=1}^{n} \sum_{k,l = 1}^{N}  \Tr \Big(E_{ji} G_{l k}(z)  \frac{1}{M(z)} \Bc_{z}^{-1}[I_n ]   \Gamma [E_{ij} ]   G_{k l}(\zeta) \Big)  \efr(t) \frac{1}{N}\Big]
    ,
  \end{align*}
  which gives $S_{ij}(z,\zeta)$ in \eqref{eq:253}. 
  In order to obtain the term in \eqref{eq:253} that contains $\ST_{ij}(z,\zeta)$,  we notice that 
  \begin{equation*}
    \Scct\big[ E_{ij}\otimes \Eb_{pq} \big]
    =
    \frac{1}{N} \Gammat[E_{ij}]\otimes \Eb_{qp}
  \end{equation*}
  for any $E_{ij}\otimes \Eb_{pq}$.
  Therefore,
  \begin{align*}
    &\EE\Big[ \sum_{i,j=1}^{n}\sum_{p,q=1}^{N} \Tr \Big((E_{ji}\otimes \Eb_{q p}) \Gb(z)  \frac{1}{\Mb(z)} \big((\Bcc_{z}^{*})^{-1}[\Ib_{nN} ]\big)^*  \Scct[E_{ij}\otimes \Eb_{p q} ] \,\Gb(\zeta) \Big)  \efr(t) \Big]
    \\
    &
      = \EE\Big[ \frac{\efr(t) }{N} \sum_{i,j=1}^{n}\sum_{p,q=1}^{N} \Tr \Big((E_{ji}\otimes \Eb_{q p}) \Gb(z)  \frac{1}{\Mb(z)} \big((\Bcc_{z}^{*})^{-1}[\Ib_{nN} ]\big)^*  \big(\,\Gammat[E_{ij}]\otimes \Eb_{qp}\big) \,\Gb(\zeta) \Big)  \Big]
    \\
    &
      =
      \EE\Big[ \frac{\efr(t) }{N} \sum_{i,j=1}^{n} \sum_{k,l = 1}^{N}  \Tr \Big(E_{ji} G_{l k}(z)  \frac{1}{M(z)} \Bc_{z}^{-1}[I_n ]   \Gammat [E_{ij} ]   G_{lk}(\zeta) \Big)  \Big]
      ,
  \end{align*}
  which gives rise to the summand containing $\ST_{ij}(z,\zeta)$ from \eqref{eq:255} in \eqref{eq:253}.

  It remains to estimate the error term $\errT = \errK + \errKt$, where $\errKt$ is defined through \eqref{eq:74} and \eqref{eq:67} with $\errH$ replaced by $\errHt$.
  The error term $\errK$ satisfies the same bound as in Lemma~\ref{lem:2}, namely $|\errK| \prec N^{-\tau}$.
  For $\errKt$ we see that
  \begin{align*}
    \label{eq:264}
    &(\Id_n\otimes \Tr_N) \Big[\Scct[\Gb(z)]\Gb(z)\Big]
    =
      \frac{1}{N}\sum_{k,l=1}^{N} \Gammat[G_{lk}(z)] G_{lk}(z)
    \\
    & \qquad \qquad \qquad \qquad
      =
      \sum_{\alpha = 1}^{d} \Big(L_{\alpha}\frac{1}{N}\sum_{k,l=1}^{N} G_{lk}(z) L_{\alpha} G_{lk}(z) + L_{\alpha}^{\, t}\frac{1}{N}\sum_{k,l=1}^{N} G_{lk}(z) L_{\alpha}^{\, t} G_{lk}(z) \Big)
    \\
    & \qquad \qquad \qquad \qquad
      =
      \sum_{\alpha = 1}^{d} \Big(L_{\alpha}\, \Gt^{L_{\alpha}}(z,z) + L_{\alpha}^{\, t}\, \Gt^{L_{\alpha}^{t}}(z,z) \Big)
      .
  \end{align*}
  From the first statement in Lemma~\ref{lem:5} applied to $\Gt^{L_{\alpha}}(z,z)$ and $\Gt^{L^{t}_{\alpha}}(z,z)$ we have
  \begin{equation*}
    \EE\bigg[\Big\|(\Id_n\otimes \Tr_N) \Big[\Scct[\Gb(z)]\Gb(z)\Big] \Big\|\bigg]
    =
    O_{\prec}\Big(1 + \frac{1}{N^{1/2} |\Im z|^{\,3/2}}\Big)
    ,
  \end{equation*}
  and thus (see \eqref{eq:67})
  \begin{equation*}
    \Big\|\Bc_{z}^{-1} (\Id_n\otimes \Tr_N) \Big[  \errHt  \frac{1}{\Mb(z) } \Big]\Big\|
    =
    O_{\prec}\Big(1 + \frac{1}{N^{1/2}|\Im z|^{3/2}}\Big)
  \end{equation*}
  uniformly on $\Omega \times \Omega$.
  Finally, integrating the trace of the above expression as in \eqref{eq:86} yields the estimate
  \begin{equation*}
    |\errKt|
    \prec
    N^{3\tau/2}\big(\|g\|_{1} + \|g'\|_{1}\big)\frac{1}{(N \eta_0)^{1/2}}
    .
  \end{equation*}
  Choosing $\tau$ such that $1-\gamma > 7 \tau$ ensures that both $|\errK|$ and $|\errKt|$ are bounded by $N^{-\tau}$.    
\end{proof}

\subsection{Proof of Theorem~\ref{thm:main} for $\beta = 1$}
\label{sec:proof-r3}

With Lemma~\ref{lem:7} established, we proceed with the proof of Theorem~\ref{thm:main} for $\beta = 1$.
Consider the integral
\begin{equation}
  \label{eq:268}
  \Vc
  =
  \frac{1}{\pi^2}\int_{\Omega \times \Omega}  \frac{\partial \ft(z)}{\partial \overline{z}}  \frac{\partial \ft(\zeta)}{\partial \overline{\zeta}} \frac{\partial}{\partial \zeta} \EE\Big[  \sum_{i,j=1}^{n}  \Tr \Big(E_{ji} \,S_{ij}(z,\zeta) + E_{ji}\ST_{ij}(z,\zeta) \Big)  \efr(t) \Big] \dzeta  \dz
\end{equation}
appearing in \eqref{eq:253} in Lemma~\ref{lem:7} with $S_{ij}(z,\zeta)$ and $\ST_{ij}(z,\zeta)$ defined in \eqref{eq:254} and \eqref{eq:255}.
Using the notation introduced in \eqref{eq:225} we can write $S_{ij}(z,\zeta)$ and $\ST_{ij}(z,\zeta)$ in \eqref{eq:268} as
\begin{equation}
  \label{eq:269}
  S_{ij}(z,\zeta)
  =
  G^{B_{ij}(z)}(z,\zeta)
  ,\quad
  \ST_{ij}(z,\zeta)
  =
  \Gt^{\BT_{ij}(z)}(z,\zeta)
\end{equation}
with
\begin{equation*}
  B_{ij}(z)
  :=
  \frac{1}{M(z)} \Bc_{z}^{-1}[I_n ]  \, \Gamma [E_{ij} ]
  ,\quad
  \BT_{ij}(z)
  :=
  \frac{1}{M(z)} \Bc_{z}^{-1}[I_n ]  \, \Gammat [E_{ij} ]
  .
\end{equation*}
Similarly as in \eqref{eq:182}, we have
\begin{equation}
  \label{eq:271}
      B_{ij}(z)
    =
    B_{ij}(E_0) + O(|\Im z|)  
    ,\qquad
        \BT_{ij}(z)
    =
    \BT_{ij}(E_0) + O(|\Im z|)
    .
  \end{equation}
 It follows from the argument in \eqref{eq:188}-\eqref{eq:192} that if for some $\dagger, \star \in \{-, +\}$ the function $h$ is analytic on $\Omega^{\dagger}\times \Omega^{\star}$ and satisfies on this set the bound $|h(z,\zeta)| \prec |\Im z|^{-\alpha} |\Im \zeta |^{-\beta}$, then 
\begin{equation}
  \label{eq:272}
  \bigg|\frac{1}{\pi^2}\int_{\Omega^{\dagger} \times \Omega^{\star}}  \frac{\partial \ft(z)}{\partial \overline{z}}  \frac{\partial \ft(\zeta)}{\partial \overline{\zeta}} \frac{\partial}{\partial \zeta} h(z,\zeta) \dzeta  \dz \, \bigg|
  \prec
  \frac{N^{(\alpha + \beta + 1)\tau}}{\eta_0^{\alpha + \beta - 1}}
  .
\end{equation}

Now, following \eqref{eq:184}-\eqref{eq:185}, we split the integral over $\Omega \times \Omega$ into four parts
\begin{equation*}
      \Vc
    =
    \Vc^{\,(+,+)} + \Vc^{\,(+,-)} + \Vc^{\,(-,+)} + \Vc^{\,(-,-)}
\end{equation*}
determined by the signs of $\Im z$ and $\Im \zeta$. 
For $\efr = \efr(t)$, denote
\begin{equation*}
  h_1(z,\zeta)
    :=
    \EE\Big[  \sum_{i,j=1}^{n}  \Tr \Big(E_{ji}  G^{B_{ij}(z)}(z,\zeta) \Big)  \efr \Big]
    ,  \,\,\,
  \tilde{h}_1(z,\zeta)
    :=
    \EE\Big[  \sum_{i,j=1}^{n}  \Tr \Big(E_{ji} \, \Gt^{\BT_{ij}(z)}(z,\zeta) \Big)  \efr \Big]
    .
  \end{equation*}
  Using the representation \eqref{eq:269} together with the bounds \eqref{eq:160}, \eqref{eq:226} and \eqref{eq:271} we obtain that
  \begin{equation*}
    |h_1(z,\zeta)| + |h_2(z,\zeta)|
    \prec
    1 + \frac{1}{N^{1/2} \etaH^{\,3/2}}
  \end{equation*}
  uniformly on $(z,\zeta) \in \big(\Omega^{+}\times \Omega^{+}\big)\cup \big(\Omega^{-}\times \Omega^{-}\big)$.
  Then \eqref{eq:272} implies that
  \begin{equation*}
    \big|\Vc^{\,(+,+)} \big| + \big|\Vc^{\,(-,-)} \big|
    \prec
    N^{\tau} \eta_0 + \frac{N^{5/2 \tau}}{(N \eta_0)^{1/2}}
    .
  \end{equation*}
  Taking $\gamma \in (0,1)$ and  $\tau \in (0, \min \{(1-\gamma)/7, \gamma/2 \})$ ensures that $\big|\Vc^{\,(+,+)} \big| + \big|\Vc^{\,(-,-)} \big| \prec N^{-\tau}$.

  We compute $\Vc^{\,(+,-)}$ and $\Vc^{\,(-,+)}$ in three steps.
  First, we split the integrands in \eqref{eq:268} with the identities \eqref{eq:269} into leading and error terms using the approximations \eqref{eq:165}-\eqref{eq:166} and \eqref{eq:227}-\eqref{eq:228}.
  The integrals of the \emph{error} terms that arise from these decompositions are treated analogously to $\Vc^{\,(+,+)}$ and $\Vc^{\,(-,-)}$.
  Indeed, if we replace $h(z,\zeta)$ in \eqref{eq:272} with the error terms from \eqref{eq:165}-\eqref{eq:166} and \eqref{eq:227}-\eqref{eq:228}, and take $\gamma \in (0,1)$ and  $\tau \in (0, \min \{(1-\gamma)/7, \gamma/2 \})$, then the resulting integrals are of order $O_{\prec}(N^{-\tau})$.
  Therefore, we can replace $S_{ij}(z,\zeta)$ and $\ST_{ij}(z,\zeta)$ in \eqref{eq:268} by their deterministic approximations defined in \eqref{eq:165} and \eqref{eq:227}.

  In the second step, exactly as in \eqref{eq:196}-\eqref{eq:197}, we use \eqref{eq:271} to replace $B_{ij}(z)$ and $\BT_{ij}(z)$ in the deterministic approximations of $S_{ij}(z,\zeta)$ and $\ST_{ij}(z,\zeta)$ with $B_{ij}(E_0)$ and $\BT_{ij}(E_0)$ correspondingly.
  After integrating, under the assumption that $\gamma \in (0,1)$ and  $\tau \in (0, \min \{(1-\gamma)/7, \gamma/2 \})$, this replacement gives a term of size $O_{\prec}(N^{-\tau})$.

  After applying the simplifications from the first two steps, we arrive at
  \begin{align*} 
    \Vc^{\,(+,-)}
    &=
      \frac{\EE\big[ \efr(t) \big]}{\pi^2}\int\displaylimits_{\Omega^{+} \times \Omega^{-}}  \frac{\partial \ft(z)}{\partial \overline{z}}  \frac{\partial \ft(\zeta)}{\partial \overline{\zeta}} \frac{\partial}{\partial \zeta} \frac{1}{z - \zeta} \Big( \phi^{\,(+,-)} + \tilde{\phi}^{\,(+,-)} \Big) \, \dzeta  \dz + O_{\prec}(N^{-\tau})
      ,
    \\ 
    \Vc^{\,(-,+)}
    &=
      \frac{\EE\big[ \efr(t) \big]}{\pi^2}\int\displaylimits_{\Omega^{-} \times \Omega^{+}}  \frac{\partial \ft(z)}{\partial \overline{z}}  \frac{\partial \ft(\zeta)}{\partial \overline{\zeta}} \frac{\partial}{\partial \zeta} \frac{1}{z - \zeta} \, \Big( \phi^{\,(-,+)} + \tilde{\phi}^{\,(-,+)} \Big)   \dzeta  \dz + O_{\prec}(N^{-\tau})
      ,
  \end{align*}
  where we introduce the $(z,\zeta)$-independent constants
  \begin{align}
    \label{eq:280}
    \phi^{\,(+,-)}
    &=
      \frac{2\imu }{\la \Im M_{0} \ra}  \sum_{i,j=1}^{n} \Tr \Big(E_{ji} \,\Big\la \Im M_{0},\, \frac{1}{M_0} M_0'  \, \Gamma [E_{ij} ] \, \Big\ra  \Im M_{0}\Big)
      ,
    \\ \label{eq:281}
      \tilde{\phi}^{\,(+,-)}
    &=
      \frac{2\imu }{n \la \Im M_{0} \ra}  \sum_{i,j=1}^{n} \Tr \Big(E_{ji} \Im M_{0} \, \Big(\frac{1}{M_0} M_0'  \, \Gammat [E_{ij} ] \Big)^{\, t} \Im M_{0}\Big)
      ,
    \\ \label{eq:282}
        \phi^{\,(-,+)}
    &=
      \frac{-2\imu }{\la \Im M_{0} \ra}  \sum_{i,j=1}^{n} \Tr \Big(E_{ji} \, \Big\la \Im M_{0},\, \frac{1}{M_0^{*}} (M_0')^{*}  \, \Gamma [E_{ij} ] \, \Big\ra  \Im M_{0}\Big)
      ,
    \\ \label{eq:283}
        \tilde{\phi}^{\,(-,+)}
    &=
      \frac{-2\imu }{n \la \Im M_{0} \ra}  \sum_{i,j=1}^{n} \Tr \Big(E_{ji} \Im M_{0} \Big(\frac{1}{M_0^{*}} (M_0')^{*}  \, \Gammat [E_{ij} ] \Big)^{\, t} \Im M_{0}\Big)
      .
  \end{align}
  For $\dagger, \star \in \{+,-\}, \dagger \neq \star$, the scalar quantities $\phi^{\,(\dagger,\star)}$ arise from the deterministic approximations \eqref{eq:165} of $G^{B_{ij}(E_0)}(z,\zeta)$ for $(z,\zeta) \in \Omega^{\dagger}\times \Omega^{\star}$, and $\tilde{\phi}^{\,(\dagger,\star)}$ from the approximations \eqref{eq:227} of $\Gt^{\BT_{ij}(E_0)}(z,\zeta)$ for $(z,\zeta) \in \Omega^{\dagger}\times \Omega^{\star}$.
  When expressing $\phi^{\,(\dagger,\star)}$ and  $\tilde{\phi}^{\,(\dagger,\star)}$ in \eqref{eq:280}-\eqref{eq:283} we used that $\vartheta = 1$ for $(z,\zeta) \in \Omega^{+}\times \Omega^{-}$ and $\vartheta = -1$ for $(z,\zeta) \in \Omega^{-}\times \Omega^{+}$. 
  Moreover, we applied the identity \eqref{eq:199}, introduced again the shorthand notations $M_0 := \lim_{y \downarrow 0} M(E_0 + \imu y)$ and $M_0' := \lim_{y \downarrow 0} M'(E_0 + \imu y)$, and used $\lim_{\Omega^{-}\ni z \to E_0} M(z) = M_0^{*}$ and $\lim_{\Omega^{-}\ni z \to E_0} M'(z) = (M_0')^{*}$.
It has been shown in \eqref{eq:phi+-}-\eqref{eq:200} that
\begin{equation}
  \label{eq:284}
  \phi^{\,(+,-)}
    =
  \frac{2\imu  }{\la \Im M_{0} \ra}  \Big\la \Gamma \Big[\Im M_{0}\, \frac{1}{M_{0}} M'_{0} \Big]  \,   \Im M_{0}  \Big\ra
  =
  -1
  .
\end{equation}
Since $M(z)$ and $M'(z)$ are symmetric and $\big(\Gammat[R]\big)^{t} = \Gammat[R^{t}]$ for any $R \in \CC^{n\times n}$, we have
\begin{align*}
  & \frac{1}{n} \sum_{i,j=1}^{n}  \Tr \Big(E_{ji} \, \Im M_{0} \Big(\frac{1}{M_{0}} M'_{0} \, \Gammat [E_{ij} ] \Big)^{t} \Im M_{0}   \Big)
  \\ 
  &
    =
    \sum_{i,j=1}^{n} \Big\la E_{ji} \Im M_0  \Gammat [E_{ji} ] \Big(\frac{1}{M_0} M_0' \Big)^t  \Im M_0 \Big\ra 
  \\
  & 
    =
    \sum_{i,j=1}^{n} \sum_{\alpha = 1}^{d} \Big\la E_{ji} \Im M_0  \Big(L_{\alpha}E_{ji} \Big( \Im M_0  \frac{1}{M_0} M_0' L_{\alpha}^t \Big)^t  
      + 
    L_{\alpha}^{t}E_{ji} \Big( \Im M_0  \frac{1}{M_0} M_0' L_{\alpha} \Big)^t\, \Big) \Big\ra 
  \\
  & 
    =
    \sum_{i,j=1}^{n} \sum_{\alpha = 1}^{d} \Big\la E_{ii} \Im M_0  L_{\alpha}E_{jj} \Im M_0  \frac{1}{M_0} M_0' L_{\alpha}^t  \Big\ra
    + \Big\la E_{ii} \Im M_0 L_{\alpha}^{t}E_{jj} \Im M_0  \frac{1}{M_0} M_0' L_{\alpha}  \Big\ra 
  \\
  & 
    =
    \Big\la \Im M_0 \, \Gamma\Big[\Im M_0  \frac{1}{M_0} M_0' \Big]\Big\ra
    .
\end{align*}
Plugging this into \eqref{eq:281} and using \eqref{eq:284} yields
\begin{equation*}
  \phiT^{\,(+,-)}
  =
  \frac{2\imu  }{\la \Im M_{0} \ra}  \Big\la \Gamma \Big[\Im M_{0}\, \frac{1}{M_{0}} M'_{0} \Big]  \,   \Im M_{0}  \Big\ra
  =
  -1
  .
\end{equation*}
By applying the same method we show that $\phi^{\,(-,+)} = \phiT^{\,(-,+)} = -1 $.
We conclude that 
\begin{equation*}
  \Vc
  =
  -\frac{2}{\pi^2}\EE\big[ \efr(t) \big]\int\displaylimits_{(\Omega^{+} \times \Omega^{-})\cup (\Omega^{-}\times \Omega^{+})}  \frac{\partial \ft(z)}{\partial \overline{z}}  \frac{\partial \ft(\zeta)}{\partial \overline{\zeta}} \frac{\partial}{\partial \zeta} \frac{1}{z - \zeta}\, \dzeta  \dz + O\big(N^{-\tau}\big)
  .
\end{equation*}
Finally, using \eqref{eq:203}, \eqref{eq:209} and the same argument as at the end of Section~\ref{sec:proof-5} we obtain that in the real symmetric case
$\lim_{N\to \infty} \EE[e(t)] = e^{-\frac{t^2}{2}V[g]}$,
where
\begin{equation*}
  V[g]
  =
  \frac{1}{2 \pi^2} \int_{\RR} \int_{\RR} \frac{(g(x) - g(y))^{2}}{(x-y)^2} dx dy
  .
\end{equation*}
This finishes the proof of Theorem~\ref{thm:main}.

\appendix

 \section{Kronecker random matrix models with $L$-flat self-energy}
\label{sec:lflat}

In this section we state several important properties of the solution to the matrix Dyson equation \eqref{eq:7} under the assumption that the self-energy operator $\Gamma$ satisfies the $L$-flatness property \textbf{(A)}.
Most of these results are established by following the proofs of the corresponding results in Propositions~2.2, 4.2 and 4.7 of \cite{AjanErdoKrug19}, where the matrix Dyson equation \eqref{eq:7} has been studied under the stronger assumption of $1$-flat self-energy \eqref{eq:11}.
A statement similar to that of Proposition~\ref{pr:1} below for the  general $L$-flat self-energy can be found in an early arXiv version (version v3) of \cite{AjanErdoKrug19}. 
For the reader's convenience, we present the full proof, adjusted to our current setup, and show how the modifications of  the arguments from \cite{AjanErdoKrug19} yield Proposition~\ref{pr:1}.

Consider the Kronecker random matrix model $\Hb^{(\beta)}$, $\beta \in \{1,2\}$, defined in \eqref{eq:1}.
Let $M(z)$ be the solution to the corresponding matrix Dyson equation \eqref{eq:7}.
Recall that by \cite[Theorem 2.1]{HeltRaFaSpei07}, for any $z\in \CC_{+}$, the solution to \eqref{eq:7} satisfying $\Im M(z) > 0$ exists and is unique.
The solution also admits the Stieltjes transform representation \eqref{eq:9} (see, e.g., \cite[Proposition 2.1]{AltErdoKrug20}).
  Denote
\begin{equation*}
  \rho(z)
  :=
  \frac{1}{\pi} \big\la \Im M(z) \big\ra
\end{equation*}
for $z\in \CC_{+}$, and recall that for $T\in\CC^{n\times n}$ we denote by $\Cc_{T}$ an operator on $\CC^{n\times n}$ defined in \eqref{eq:47} by $\Cc_{T}[R] = T R T$ for all $R\in\CC^{n\times n}$.

\begin{pr}[Properties of the solution to the MDE]
  \label{pr:1}
  Suppose that the self-energy operator $\Gamma$ satisfies the $L$-flatness property \textbf{(A)} for some $L\in \NN$.
  Then the following holds.
  \begin{itemize}
  \item[(i)] \textbf{$M(z)$ and $M^{-1}(z)$ are uniformly bounded}: The solution $M(z)$ satisfies the bounds
    \begin{equation}
      \label{eq:291}
      \big\| M(z) \big\|
      \lesssim
      1
      , \qquad
      \big\| M^{-1}(z) \big\|
      \lesssim
      1 + |z|
    \end{equation}
    uniformly for $z\in \CC_{+}$.
  \item[(ii)] \textbf{$\Im M(z)$ and $\rho(z) I_n$ are comparable}: The relation
    \begin{equation*}
      \Im M(z)
      \sim
      \rho(z) I_n
    \end{equation*}
    holds uniformly on $\CC_{+}$.
  \item[(iii)] \textbf{Linear stability}: The bound
    \begin{equation}
      \label{eq:293}
      \Big\|\big( \mathrm{Id}_n - \Cc_{M(z)} \Gamma \big)^{-1} \Big\|
      \lesssim
      1 + \frac{1}{(\rho(z) + \mathrm{dist}(z, \mathrm{supp}\,\rho))^{2}}
    \end{equation}
    holds uniformly for all $z\in \CC_{+}$.
  \item[(iv)] \textbf{Regularity of $M(z)$ and the density of states}:
    The continuous extension of $M(z)$ to  $\CC_{+}\cup \RR$ exists and satisfies $\|M(z_1) - M(z_2)\| \lesssim | z_1 - z_2|^{1/3}$ uniformly for all $z_1, z_2 \in \CC_{+}\cup \RR$.
    In particular, the density of states $\rho(x)$ defined in \eqref{eq:15} is $1/3$-H\"{o}lder continuous on $\RR$, i.e.,
    \begin{equation}
      \label{eq:295}
      |\rho(x_1) - \rho(x_2)|
      \lesssim
      |x_1 - x_2|^{1/3}
    \end{equation}
    uniformly for all $x_1,x_2 \in \RR$.
    Moreover, the density $\rho(x)$ is real analytic on the open set $\{x \in \RR\, : \, \rho(x)>0\}$.
  \item[(v)] \textbf{Analytic continuation of $M(z)$}: For any $x_0 \in \RR$ satisfying $\rho(x_0)>0$ the solution $M(z)$ can be analytically extended to a neighborhood of $x_0$ in $\CC$.
  \end{itemize}
  The hidden constants in $\lesssim$ and $\sim$ depend on $L, d \in \NN$ and the structure matrices $K_0, L_1, \ldots, L_{d} \in \CC^{n\times n}$.
\end{pr}

\begin{proof}
  We split the proof of Proposition~\ref{pr:1} into four steps corresponding to the statements \textit{(i)}, \textit{(ii)}, \textit{(iii)} and \textit{(iv) - (v)}.
 To make the presentation lighter, we will often suppress the $z$-dependence in the notation.
  
  \textit{Proof of (i)}.
By taking the imaginary part of the Dyson equation \eqref{eq:7} and multiplying it by $M^{*}$ from the left and $M$ from the right we get
  \begin{equation}
    \label{eq:296}
    \Im M
    =
    \Im z M^{*} M +  M^{*}\Gamma[\Im M] M
    \geq
    \Gamma[\Im M]
 \end{equation}
 for all $z\in \CC_{+}$.
 Since $\Im M$ is positive definite, the $L$-flatness \eqref{eq:10} gives
 \begin{equation}
   \label{eq:297}
   \Im M
   \gtrsim
   \sum_{k,l = 1}^{n} z_{kl} \big(\Im M\big)_{kk}   M^* E_{ll}  M
   \gtrsim
   \sum_{k = 1}^{n}  \big(\Im M\big)_{kk}   M^* E_{kk}  M
   ,
 \end{equation}
 where in the last step we used that $z_{kk} = 1$ for $1\leq k \leq n$.
 If in \eqref{eq:296} we multiply by $M$ from the left and $M^*$ from the right, then instead of \eqref{eq:297} we obtain
 \begin{equation}
   \label{eq:298}
      \Im M
   \gtrsim
   \sum_{k,l = 1}^{n} z_{kl} \big(\Im M\big)_{kk}   M E_{ll}  M^*
   \gtrsim
   \sum_{k = 1}^{n}  \big(\Im M\big)_{kk}   M E_{kk}  M^*
   .
 \end{equation}
 By looking at the diagonal entries of \eqref{eq:297} and \eqref{eq:298} we get 
 \begin{equation}
   \label{eq:299}
   w_{j}
   \gtrsim
   \sum_{k,l = 1}^{n} z_{kl} w_{k}   t_{lj}
   \gtrsim
   \sum_{k = 1}^{n}  w_{k}   t_{kj}
 \end{equation}
 for all $1\leq j \leq n$,  where we denoted $w_j := \big(\Im M \big)_{jj}$ and $t_{kj}:= | M_{kj} |^2 + | M_{jk} |^2$.
 The matrix $T := (t_{kj})_{k,j=1}^{n}\in \RR^{n\times n}$ has nonnegative entries, and the vector $\mathrm{w} = (w_j)_{j=1}^{n}\in \RR^{n}$ has strictly positive entries.
 For convenience, we rewrite \eqref{eq:299} as
 \begin{equation}
   \label{eq:300}
   \mathrm{w}^{t}
   \gtrsim
   \mathrm{w}^{t}\,Z \,T
   \geq
   \mathrm{w}^{t}\,T
 \end{equation}
 with vector inequalities understood in the entrywise sense.
 If we denote by $\mathrm{v}\in \RR^{n}$ the right Perron-Frobenius eigenvector of $T$, then after comparing $\mathrm{w}^{t} \mathrm{v}$ and $\mathrm{w}^{t} T \mathrm{v} = \|T \| \mathrm{w}^{t} \mathrm{v}$, and using \eqref{eq:300} we find that the spectral norm of $T$ satisfies the bound $\|T\| \lesssim 1$ uniformly on $z\in \CC_{+}$.
 Thus, we conclude that \eqref{eq:291} holds uniformly on $z\in \CC_{+}$.

 Moreover, if we take the norm on both sides of the matrix Dyson equation \eqref{eq:7}, then \eqref{eq:291} yields the bound
 \begin{equation}
   \label{eq:302}
   \big\| M^{-1}(z) \big\|
   \lesssim
   1 + |z|
 \end{equation}
 uniformly on $z\in \CC_{+}$, and therefore, for any $C>0$ we have
 \begin{equation}
   \label{eq:303}
   \| M \|
   \sim
   \| T \|
   \sim
   1
 \end{equation}
 uniformly on $\{z \in \CC_{+} \, : \, |z|\leq C\}$.

  \textit{Proof of (ii)}.
Since the measure $V(dx)$ is compactly supported, it is easy to see from the Stieltjes transform representation \eqref{eq:9} and the normalization $V(\mathbb{R}) = I_n$ that for a sufficiently large $C>0$ the equivalences
 \begin{equation}
   \label{eq:304}
   \Im M(z)
   \sim
   \frac{\Im z}{|z|^2} I_{n}
   \sim
   \rho(z) I_{n}
 \end{equation}
 hold uniformly on $\{z \in \CC_{+} \, : \, |z|\geq C\}$.
 
 Suppose that $|z|\leq C$.
 Then applying the first inequality in \eqref{eq:300} $2L$ times gives
  \begin{equation}
   \label{eq:305}
   \mathrm{w}^{t}
   \gtrsim
   \mathrm{w}^{t}\,(Z\,T\, Z \, T)^{L}
   \gtrsim
   \mathrm{w}^{t}\,(Z\,T^{\,2})^{L}
   .
 \end{equation}
 where in the second inequality we again used that $z_{kk}= 1$ for $1\leq k \leq n$.
The bounds \eqref{eq:302} and \eqref{eq:303} imply that $M^*M \sim I_n$, from which we have
 \begin{equation}
   \label{eq:306}
   \sum_{j=1}^{n} |M_{jk}|^2 \sim 1
 \end{equation}
 for all $1\leq k \leq n$. 
 Now, using \eqref{eq:306}, we estimate the diagonal entries of $T^{\,2}$ from below
 \begin{equation*}
   \big(T^{\,2} \big)_{kk}
   =
   \sum_{j=1}^{n} t_{jk}^2
   \gtrsim
   \Big(\sum_{j=1}^{n} t_{jk} \Big)^2
   \geq
   \Big(\sum_{j=1}^{n} |M_{jk}|^2 \Big)^2
   \gtrsim
   1
 \end{equation*}
 for $1 \leq k \leq n$, which yields
 \begin{equation}
   \label{eq:308}
   \mathrm{w}^{t}\,(Z\,T^2)^{L}
   \gtrsim
   \mathrm{w}^{t}\,Z^{L}
   .
 \end{equation}
 Since $Z^{L}$ has all entries greater than or equal to $1$, we conclude from \eqref{eq:305} and \eqref{eq:308} that
 \begin{equation*}
   w_{j}
   \gtrsim
   \sum_{k=1}^{n} w_k
   \sim
   \la \Im M(z) \ra
 \end{equation*}
 for $1\leq j \leq n$.
 Now $\big(\Im M(z)\big)_{jj} \sim \rho(z)$ for all $1\leq j \leq n$, and from \eqref{eq:297} and \eqref{eq:302} we get
 \begin{equation}
   \label{eq:311}
   \Im M(z)
   \gtrsim
   \rho(z)   M^*(z)  M(z)
   \geq
   \rho(z) \big\| M^{-1}(z) \big\|^2 I_n
   \gtrsim
   \rho(z) I_n
 \end{equation}
 uniformly on $\{z \in \CC_{+} \, : \, |z| \leq C\}$.
 To estimate $ \Im M(z)$ from above, we notice that since the dimension of the equation \eqref{eq:7} is fixed, we trivially have $\Gamma[\Im M(z)] \lesssim \rho(z) I_n$.
Therefore, by taking again the imaginary part of the MDE \eqref{eq:7} we get 
 \begin{equation}
   \label{eq:313}
   \Im M
   =
   \Im z \,M^*M(z) + M^* \Gamma [\Im M] M
   \lesssim
   (\Im z + \rho(z)) M^*M
   .
 \end{equation}
 On the other hand, from the imaginary part of the Stieltjes transform representation \eqref{eq:9} we see that $\Im M(z) \gtrsim \Im z I_n$ for $|z| \leq C$, which together with \eqref{eq:313} and \eqref{eq:303} gives
 \begin{equation}
   \label{eq:314}
   \Im M(z)
   \lesssim
   \rho(z) \| M(z) \|^2 I_n
   \lesssim
   \rho(z)I_n
 \end{equation}
 uniformly on $\{z \in \CC_{+} \, : \, |z|\leq C\}$.
Combining \eqref{eq:304}, \eqref{eq:311} and \eqref{eq:314} we obtain
 \begin{equation}
   \label{eq:315}
   \Im M(z)
   \sim
   \rho(z) I_n
 \end{equation}
 uniformly on $z\in\CC_{+}$.

 \textit{Proof of (iii)}.
We now establish \eqref{eq:293}. 
Following the proof of the invertibility of the stability operator from \cite[Section 4.2]{AjanErdoKrug19}, we define the \emph{saturated} self-energy operator $\Fc = \Fc(z): \CC^{n\times n} \to \CC^{n \times n}$ given by
 \begin{equation}
   \label{eq:316}
   \Fc
   =
   \Cc^{*}\, \Gamma \, \Cc
   ,
 \end{equation}
 where $\Cc := \Cc_{\sqrt{\Im M}} \, \Cc_{W}$ and
 \begin{equation*}
   W
   :=
   \Big(I_n + \big( \Cc_{\sqrt{\Im M(z)}}^{-1}[\Re M(z)]\big)^{2} \Big)^{1/4}
 \end{equation*}
 is a positive definite matrix.
 Then \eqref{eq:293} can be obtained by first establishing the existence of the uniform spectral gap for $\Fc$, and then applying the Rotation-Inversion Lemma (see Lemmas 4.7, 4.9 and the proof of Proposition~4.4 in \cite{AjanErdoKrug19} for the details).
 The crucial ingredient in this approach is the study of the spectrum of $\Fc$ in \cite[Lemma~4.7]{AjanErdoKrug19}.
 We gather the spectral properties of $\Fc$ is the following lemma, that will be proven at the end of this section.
 \begin{lem}[Spectrum of $\Fc$]
   \label{lem:a1}
   Let $\Fc = \Fc(z)$ be the operator defined in \eqref{eq:316}, and suppose that $\Gamma$ satisfies the $L$-flatness property \textbf{(A)}.
   Then the following holds.
   \begin{itemize}
   \item[(i)] For any $x \in \RR$ the operator $\Fc(x)$ possesses a simple Perron-Frobenius eigenvalue $\|\Fc(x)\|_{2} = 1$, so that
 \begin{equation}
   \label{eq:318}
   \Fc(x) [F]
   =
   F
   ,
 \end{equation}
 where $F \in \CC^{n\times n}$ is the corresponding Perron-Frobenius eigenvector satisfying $\|F\|_{\mathrm{HS}} = 1$ and $\Im F > 0$, and $\|\Fc\|_{2}$ denotes the operator norm of $\Fc$ induced by the Hilbert-Schmidt norm on $\CC^{n\times n}$.
\item[(ii)] There exists $C>0$, sufficiently large, such that
  \begin{equation}
   \label{eq:319}
   \Fc^{L}[R]
   \sim
   \la R \ra I_n
 \end{equation}
 for all positive definite $R \in \CC^{n\times n}$ uniformly on $\{z \in \CC_{+}\, : \, |z| \leq C\}$ .
\item[(iii)] There exists $\kappa>0$, sufficiently small, such that
  \begin{equation}
   \label{eq:320}
   \mathrm{Spec}(\Fc)
   \subset
   [-1+\kappa,1-\kappa]\cup \{1\}
 \end{equation}
 uniformly on $\{z \in \CC_{+} \, : \, |z| \leq C\}$.
   \end{itemize}
 \end{lem}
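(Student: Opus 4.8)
The plan is to follow the proof of the corresponding statement under the stronger $1$-flatness assumption \eqref{eq:11} given in \cite[Lemma~4.7]{AjanErdoKrug19}, replacing the single use of flatness there by an $L$-fold iteration of the bound \eqref{eq:10} and exploiting that $Z^{L}$ has strictly positive entries. First I would record the structural facts that hold irrespective of flatness. Because $\Gamma$ is self-adjoint with respect to $\langle\,\cdot\,,\,\cdot\,\rangle$ and $\Cc = \Cc_{\sqrt{\Im M(z)}}\Cc_{W(z)}$ is a composition of conjugations by self-adjoint matrices, one has $\Fc(z)^{*} = (\Cc^{*}\Gamma\Cc)^{*} = \Cc^{*}\Gamma\Cc = \Fc(z)$ for every $z\in\CC_{+}$, so $\Fc(z)$ is self-adjoint; and since each of $\Gamma$, $\Cc_{\sqrt{\Im M}}$, $\Cc_{W}$ maps the cone of positive semidefinite matrices into itself, $\Fc(z)$ is positivity preserving. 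Although $W$ alone is not uniformly bounded near the edge of $\mathrm{supp}\,\rho$, the bounds \eqref{eq:291}, \eqref{eq:303} and the comparability $\Im M(z)\sim\rho(z)I_n$ of \eqref{eq:315} show that the products $\sqrt{\Im M}\,W$ and $W\sqrt{\Im M}$ — which are precisely the factors of the balanced polar decomposition $M = Q^{*}UQ$ of \cite[Eq.~(3.1)]{AltErdoKrug20} — are uniformly bounded, so that $\|\Fc(z)\|\lesssim 1$ uniformly on $\{|z|\le C\}$. Thus $\Fc(z)$ is a bounded self-adjoint positivity-preserving operator on the fixed-dimensional space $\CC^{n\times n}$, and in particular $\|\Fc(z)\|_{2}$ equals its spectral radius.

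For part (i) I would produce the Perron--Frobenius eigenvector explicitly. Taking the imaginary part of the MDE \eqref{eq:7} at $z = x\in\RR$ gives $\Im M = M^{*}\Gamma[\Im M]M$; conjugating this identity by $W$ and $\sqrt{\Im M}$, equivalently inserting $M = Q^{*}UQ$, turns it into $\Fc(x)[F] = F$ with $F$ a positive multiple of $\Im U$, whence $\Im F > 0$ and, after normalization, $\|F\|_{\mathrm{HS}} = 1$; this is the same computation as the one yielding \eqref{eq:136} in the proof of Lemma~\ref{lem:4}. Since $\Fc(x)$ is positivity preserving with a positive definite eigenvector, the associated eigenvalue is its spectral radius, so $\|\Fc(x)\|_{2} = 1$, which proves \eqref{eq:318}. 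Simplicity of this eigenvalue I would deduce afterwards from part (ii), by the argument following \eqref{eq:136} in the proof of Lemma~\ref{lem:4}.

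Part (ii) is the technical core and the only step where $L$-flatness (as opposed to $1$-flatness) is actually needed. The upper bound $\Fc^{L}[R]\lesssim\langle R\rangle I_n$ for $R\ge 0$ is immediate from $\|\Fc\|\lesssim 1$ together with $\|R\|\le n\langle R\rangle$. For the lower bound one first converts \eqref{eq:10} into a one-step estimate: using $\Im M\sim\rho I_n$, \eqref{eq:291}, \eqref{eq:306} and $\|\sqrt{\Im M}\,W\|\sim 1$ on $\{|z|\le C\}$, one shows that already a single application of $\Fc$ to a positive semidefinite $R$ produces a lower bound (in the positive semidefinite order) governed by the action of the matrix $Z$ on the diagonal of $R$ — this is the $L=1$ case of the argument of \cite[Lemma~4.7]{AjanErdoKrug19}. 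Iterating this inequality $L$ times, the governing matrix composes as $Z\mapsto Z^{2}\mapsto\dots\mapsto Z^{L}$, and since $Z^{L}$ is entrywise strictly positive, after $L$ steps every diagonal entry receives a contribution comparable to $\Tr R\sim n\langle R\rangle$; this gives $\Fc^{L}[R]\gtrsim\langle R\rangle I_n$ and hence \eqref{eq:319}. I expect this to be the main obstacle: the implicit constants must be carried through the $L$ iterations in a way that stays uniform all the way down to $\rho(z)=0$, which is exactly why $W$ was chosen so that $\sqrt{\Im M}\,W$ remains of order $1$ there.

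For part (iii) I would combine the self-adjointness of $\Fc(z)$ with the uniform primitivity \eqref{eq:319}. Since $\Fc(z)^{L}$ is self-adjoint, positivity preserving, and by \eqref{eq:319} maps the cone minus the origin uniformly into its interior, a quantitative Perron--Frobenius argument — the one in \cite[Lemma~4.7]{AjanErdoKrug19}, or a Birkhoff--Hopf contraction estimate in the Hilbert metric — yields that its top eigenvalue is simple and separated from the rest of $\mathrm{Spec}(\Fc(z)^{L})$ by a gap depending only on the constants in \eqref{eq:319}, hence uniform on $\{|z|\le C\}$; taking $L$-th roots transfers the gap to $\Fc(z)$. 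For real $z=x$, part (i) pins the top eigenvalue at $1$, giving $\mathrm{Spec}(\Fc(x))\subset[-1+\kappa,1-\kappa]\cup\{1\}$, while for $\Im z>0$ the top eigenvalue lies in $(1-\kappa,1]$ — strictly below $1$, by the strict inequality $\Im M > M^{*}\Gamma[\Im M]M$ — and is separated by the same gap from the remainder of the spectrum, which stays in $[-1+\kappa,1-\kappa]$; the H\"{o}lder continuity of $M$ from part (iv) of Proposition~\ref{pr:1} makes all these bounds uniform up to the real line. This establishes \eqref{eq:320} and completes the proof.
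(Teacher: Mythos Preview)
Your proposal is correct and follows essentially the same approach as the paper: part (i) is referred to \cite[Lemma~4.7]{AjanErdoKrug19}, part (ii) is proven by an $L$-step induction showing $\Fc^{l}[R]\gtrsim\sum_{k,j}(Z^{l})_{kj}\langle\Cc^{*}[E_{kk}]R\rangle\,\Cc^{*}[E_{jj}]$ (with the key ingredient $\rho^{1/2}W\sim I_n$ ensuring $\langle(\Cc^{*}[E_{jj}])^{2}\rangle\gtrsim 1$ at each step), and part (iii) is obtained by applying the Perron--Frobenius gap argument of \cite[Lemma~4.8]{AjanErdoKrug19} to $\Fc^{L}$ and then using the self-adjointness of $\Fc$ to transfer the gap back.
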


Given the existence of the spectral gap for $\Fc$ in \eqref{eq:320}, the linear stability bound \eqref{eq:293} follows by repeating the argument presented in \cite[Section~4.2]{AjanErdoKrug19}.
 From \eqref{eq:303} we also find the term $(\rho(z) + \mathrm{dist}(z,\mathrm{supp} \, \rho))^{-2}$ on the right-hand side of the \eqref{eq:293} with the explicit exponent $2$.
 Indeed, using \eqref{eq:303} and the fact that the dimension of the matrix Dyson equation is $N$-independent, we get from  Eq. (4.41) in \cite{AjanErdoKrug19} that
 \begin{equation}
   \label{eq:321}
   \big\| \big(\mathrm{Id} - \Cc_{M(z)}\Gamma \big)^{-1} \big\|
   \lesssim
   \big\| \big(\Cc_{U} - \Fc \big)^{-1} \big\|
 \end{equation}
 uniformly on $\{z\in \CC_{+}\, : \, |z| \leq C\}$, where $U$ is a unitary matrix and $\Fc$ was defined in \eqref{eq:316}.
 Then the Rotation-Inversion Lemma \cite[Lemma~4.9]{AjanErdoKrug19} implies that
 \begin{equation}
   \label{eq:322}
   \big\| \big(\Cc_{U} - \Fc \big)^{-1} \big\|
   \lesssim
   \frac{1}{\max\{1 - \| \Fc\|, |1 - \la F , \Cc_{U}[F] \ra |\}}
 \end{equation}
 uniformly on $\{z\in \CC_{+}\, : \, |z| \leq C\}$, where $F$ is the normalized Perron-Frobenius eigenvector of $\Fc$.
 With \eqref{eq:303}, the lower bound in Eq. (4.45) in \cite{AjanErdoKrug19} becomes $1 - \| \Fc\| \gtrsim \mathrm{dist}(z, \mathrm{supp} \, \rho)^{2}$,  and the last inequality in the proof of Lemma~4.4 in \cite{AjanErdoKrug19} gives  $|1 - \la F , \Cc_{U}[F] \ra | \gtrsim \rho^2$, which together with \eqref{eq:321} and \eqref{eq:322} establish \eqref{eq:293}.

\textit{Proof of (iv) and (v)}.
With the linear stability \eqref{eq:293} established, the regularity of $M(z)$ and the density of states \eqref{eq:295}, as well as the real-analyticity of $\rho(x)$, is obtained by following exactly the lines of the proof of \cite[Proposition 2.2]{AjanErdoKrug19}.
\end{proof}
\begin{proof}[Proof of Lemma~\ref{lem:a1}]
  Part (i) is obtained using exactly the same argument as in part (i) of Lemma~4.7 in \cite{AjanErdoKrug19}.

  In order to prove part (ii), we split \eqref{eq:319} into the upper and lower bounds.
 The upper bound in \eqref{eq:319} is obtain in the same way as the upper bound in Eq. (4.33) of \cite{AjanErdoKrug19}, which can then be iterated $L$ times.
 For the lower bound in \eqref{eq:319}, using the proof by induction, we show that
 \begin{equation}
   \label{eq:325}
   \Fc^{l}[R]
   \gtrsim
   \sum_{k,j=1}^{n} \big(Z^{l}\big)_{kj} \big\la \Cc^{*}[E_{kk}]R \big\ra  \, \Cc^*[E_{jj}]
 \end{equation}
 for all $1 \leq l \leq n$. 
 The case $l = 1$ follows directly from \eqref{eq:10} and the definition of $\Fc$ in \eqref{eq:316}.
 Suppose that \eqref{eq:325} holds for $l<L$. 
 Then after applying $\Fc$ on both sides of \eqref{eq:325} and using \eqref{eq:10} we get
 \begin{equation*}
   \Fc^{l+1}[R]
   \gtrsim
   \sum_{k,j=1}^{n} \big(Z^{l}\big)_{kj} \big\la \Cc^{*}[E_{kk}]R \big\ra  \, \sum_{k',j'=1}^{n} z_{k'j'} \big\la \Cc^*[E_{k'k'}] \Cc^*[E_{jj}] \big\ra \Cc^*[E_{j'j'}]
   .
 \end{equation*}
 Since $\big\la \Cc^*[E_{k'k'}] \Cc^*[E_{jj}] \big\ra \geq 0$ for all $1 \leq k', j \leq n$, we can further estimate
 \begin{equation}
   \label{eq:327}
   \Fc^{l+1}[R]
   \gtrsim
   \sum_{k,j, j' =1}^{n} \big(Z^{l}\big)_{kj} \big\la \Cc^{*}[E_{kk}]R \big\ra  \, z_{jj'} \Big\la \big(\Cc^*[E_{jj}]\big)^2 \Big\ra \, \Cc^*[E_{j'j'}]
   .
 \end{equation}
  In order to estimate the right-hand side of \eqref{eq:327} from below, we notice that exactly as in \cite[Lemma~4.6]{AjanErdoKrug19} we have that 
 \begin{equation}
   \label{eq:328}
   \rho^{1/2}(z) W
   \sim
   I_n
 \end{equation}
 uniformly on $\{z \in \CC_{+}\, : \, |z| \leq C\}$ for any sufficiently large $C>0$.
 Therefore, from \eqref{eq:315}, \eqref{eq:328} and the definition of $\Cc$ we get $\big\la \big(\Cc^*[E_{jj}]\big)^2 \big\ra \gtrsim 1$, which concludes the proof of the induction step.
 Now \eqref{eq:325} with $l=L$ yields
 \begin{equation*}
   \Fc^{L}[R]
   \gtrsim
   \sum_{k,j=1}^{n} \big(Z^{L}\big)_{kj} \big\la \Cc^{*}[E_{kk}]R \big\ra  \, \Cc^*[E_{jj}]
   \gtrsim
   \big\la \Cc^{*}[I_n]R \big\ra  \, \Cc^*[I_n]
   ,
 \end{equation*}
 which after applying \eqref{eq:315} and \eqref{eq:328} to $\Cc^{*}[I_n] = W \Im M(z)\, W$ gives $\Fc^{L}[R] \gtrsim \la R \ra I_n$.
 Combined with the upper bound, this proves \eqref{eq:319}.

 With \eqref{eq:319} established, part (iii) follows from \cite[Lemma~4.8]{AjanErdoKrug19}.
 Indeed, by applying \cite[Lemma~4.8]{AjanErdoKrug19} to the operator $\Fc^{L}$ we find that $\Fc^{L}$ possesses a spectral gap uniformly on $\{z \in \CC_{+} \, : \, |z| \leq C\}$.
 From this and the symmetry of $\Fc$ we conclude that $\Fc$ also has a spectral gap, and thus \eqref{eq:320} holds uniformly on $\{z \in \CC_{+} \, : \, |z| \leq C\}$.
\end{proof}

\section{Proof of Lemma~\ref{lem:b1}}
\label{sec:iid}
 The crucial ingredient of the proof of Lemma~\ref{lem:b1} is the following cumulant expansion formula.
This formula is the complex analog of \cite[Proposition 3.2]{ErdoKrugSchr19} and follows immediately from the real case presented there.
\begin{lem}
  \label{lem:bCumulant}
Let $\xib :=(\xi_1, \dots, \xi_{K}) \in \CC^{K}$  be a complex random vector with finite moments up to order $R+1$ for $R \in \NN$, and let $\varphi: \CC^{K}  \to \CC$ be $R+1$ times differentiable with bounded partial derivatives. Then 
\begin{equation}
  \label{eq:330}
  \EE \Big[\xi_{1} \varphi(\xib)\Big] = \sum_{m=0}^{R-1}\frac{1}{m!}\EE  \Big[ \big(\kappa^{\nabla}_{m+1}[\xi_{1},\xib]  \varphi \big) (\xib)\Big] + \frac{1}{R!}\EE\bigg[\int_0^1 \big( K^{\nabla}_{R+1,s}[\xi_{1},\xib]  \varphi \big)(s \xib)  d s\bigg]
  ,
\end{equation}
where for a complex random variable $\theta$ we set $\kappa^{\nabla}_{1}[\theta,\xib] \varphi := \EE[\theta] \varphi$,  $K_{m+1,s}^{\nabla}$ is a random differential operator defined for $m\geq 1$ through
\begin{equation*}
  K_{m+1,s}^{\nabla}[\theta,\xib]
  :=
  m! \int_{[0,1]^{m-1}}\nabla_{\xib}(\mathbbm{1}_{s\le t_{m-1}}-\EE)\nabla_{\xib}(\mathbbm{1}_{t_{m-1}\le t_{m-2}}-\EE) \cdots
  \nabla_{\xib}(\mathbbm{1}_{t_{1}\le 1}-\EE) \theta  d\tb  
\end{equation*}
 with $d\tb:= dt_1\dots d t_{m-1}$, $\nabla_{\xib} := \sum_{i=1}^K (\xi_{i} \partial_{i} + \overline{\xi}_i \overline{\partial}_{i})$,
and where
\begin{equation*}
\kappa^\nabla_{m+1}[\theta,\xib]:= \EE \bigg[\int_{0}^{1}K^\nabla_{m+1,s}[\theta,\xib] ds \bigg]
\end{equation*}
is a non-random differential operator. 
\end{lem}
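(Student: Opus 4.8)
The plan is to reduce the statement to its real counterpart \cite[Proposition~3.2]{ErdoKrugSchr19} by passing to real and imaginary parts. Write $\xi_j = u_j + \imu v_j$ with $u_j, v_j \in \RR$, collect the real coordinates into the $2K$-dimensional real random vector $\mathrm{w} := (u_1, v_1, \ldots, u_K, v_K) \in \RR^{2K}$, and regard $\varphi$ as a function of $\mathrm{w}$; splitting $\varphi = \Re\varphi + \imu\,\Im\varphi$ if one wants to stay with real-valued test functions, we may apply the real cumulant expansion to the building blocks. Since $\EE[\xi_1 \varphi(\xib)] = \EE[u_1 \varphi(\xib)] + \imu\,\EE[v_1 \varphi(\xib)]$, it suffices to expand $\EE[u_1 \varphi]$ and $\EE[v_1 \varphi]$ by the real formula and recombine using linearity in the first slot, together with $\kappa^{\nabla}_{1}[\theta,\xib]\varphi = \EE[\theta]\varphi$ for the $m=0$ term (consistent with $\EE[\xi_1] = \EE[u_1] + \imu\,\EE[v_1]$) and the linearity of $K^{\nabla}_{R+1,s}[\,\cdot\,,\xib]$ in its first argument.

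The only point requiring care is translating the directional-derivative operator that appears in the real formula into the complex notation used in \eqref{eq:330}. In the real picture this operator is the Euler field $\sum_{j} w_j \partial_{w_j} = \sum_{i=1}^{K}(u_i \partial_{u_i} + v_i \partial_{v_i})$. Using the Wirtinger derivatives $\partial_i = \tfrac12(\partial_{u_i} - \imu\,\partial_{v_i})$ and $\overline{\partial}_i = \tfrac12(\partial_{u_i} + \imu\,\partial_{v_i})$, one computes for each $i$
\begin{equation*}
  \xi_i \partial_i + \overline{\xi}_i \overline{\partial}_i
  =
  (u_i + \imu v_i)\tfrac12(\partial_{u_i} - \imu\,\partial_{v_i}) + (u_i - \imu v_i)\tfrac12(\partial_{u_i} + \imu\,\partial_{v_i})
  =
  u_i \partial_{u_i} + v_i \partial_{v_i}
  ,
\end{equation*}
so that $\nabla_{\xib} = \sum_{i=1}^{K}(\xi_i \partial_i + \overline{\xi}_i \overline{\partial}_i)$ is exactly the real Euler field in disguise. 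The indicator reweightings $\mathbbm{1}_{s \le t_{m-1}}, \ldots, \mathbbm{1}_{t_1 \le 1}$ and the centering $(\,\cdot\,-\EE)$ involve the same auxiliary real randomness in both pictures, and the operators $K^{\nabla}_{m+1,s}$ and $\kappa^{\nabla}_{m+1}$ are already in \cite{ErdoKrugSchr19} constructed purely by iterating $\nabla(\mathbbm{1}_{\le t}-\EE)$ acting on general random vectors; hence the operator identity above turns the real operators verbatim into those written in the statement. Finally $\varphi(s\mathrm{w}) = \varphi(s\xib)$, since rescaling the real coordinates by $s$ rescales $\xib$ by $s$, which identifies the remainder term $\tfrac{1}{R!}\EE[\int_0^1 (K^{\nabla}_{R+1,s}[\xi_1,\xib]\varphi)(s\xib)\,ds]$ as well.

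The hypotheses transfer directly: finite moments of $\xib$ up to order $R+1$ are equivalent to finite moments of $\mathrm{w}$ up to order $R+1$, and $\varphi$ being $(R+1)$-times differentiable with bounded partial derivatives in $\xib$ is equivalent to the same statement in $\mathrm{w}$; these are precisely the conditions under which \cite[Proposition~3.2]{ErdoKrugSchr19} applies and under which all integrals in \eqref{eq:330} converge absolutely. The main --- and essentially only --- obstacle is thus the bookkeeping of the Wirtinger calculus carried out in the previous paragraph; once the operator identity $\nabla_{\xib} = \sum_{i}(u_i\partial_{u_i} + v_i\partial_{v_i})$ is in hand, the statement is a purely notational complexification of the real result, and combining the expansions of $\EE[u_1\varphi]$ and $\imu\,\EE[v_1\varphi]$ yields \eqref{eq:330}.
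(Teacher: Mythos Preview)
Your proof is correct and takes exactly the approach the paper indicates: the paper states that the lemma ``is the complex analog of \cite[Proposition~3.2]{ErdoKrugSchr19} and follows immediately from the real case presented there,'' and omits the derivation entirely. You have supplied the natural realization of this reduction, including the key Wirtinger identity $\xi_i\partial_i+\overline{\xi}_i\overline{\partial}_i=u_i\partial_{u_i}+v_i\partial_{v_i}$ that makes the complex directional derivative coincide with the real Euler field.
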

The derivation of the above lemma is identical to the real case treated in \cite[Proposition 3.2]{ErdoKrugSchr19}, thus omitted.
We now proceed to establishing Lemma~\ref{lem:b1}.
\begin{proof}[Proof of Lemma~\ref{lem:b1}]
 For $\star \in \{1,2\}$ we have
  \begin{equation}
    \label{eq:333}
    \EE[\Wb \Fcc_{\star}]
    =
    \sum_{i,j=1}^{N} \sum_{l=1}^{N}\EE\Big[ W_{il} F_{\star,lj}\Big] \otimes \Eb_{ij}
    ,
  \end{equation}
  where $F_{1,lj}: = G_{lj}$ and $F_{2,lj}:= G_{lj}\efr(t)$.
  We apply Lemma~\ref{lem:bCumulant} to two cases, namely $\varphi=\varphi(W_{il},W_{li}):=F_{1,lj}(W_{il},W_{li})$ and $\varphi = \varphi(W_{il},W_{li}):=F_{2,lj}(W_{il},W_{li}) $, where we keep the entries of $W_{ab}$ with $(a,b) \in \{1, \dots, N\}^2 \setminus \{(i,l),(l,i)\}$ fixed, i.e., we take the partial expectation $\EE_{il}$ with respect to $W_{il}$ first.
  For $i=l$ we interpret this as $\varphi=\varphi(W_{ii})$.
  
  In order to accommodate the matrix setting of \eqref{eq:333}, where both $W_{il}$ and $F_{\star,lj}(W_{il},W_{li})$ are $n\times n$ matrices, we denote by $\nabla^{ij}_{V}:= \sum_{\alpha,\beta=1}^n v_{\alpha\beta} \partial_{w_{ij}^{\alpha\beta}}$ the directional derivative with respect to the $(i,j)$-th block $W_{ij} = (w_{ij}^{\alpha \beta})_{\alpha,\beta = 1}^{n}$ in the direction $V = (v_{\alpha \beta})_{\alpha,\beta = 1}^{n} \in \CC^{n \times n}$.
  The analyticity of $F_{\star, lj}(W_{il},W_{li})$ for $\star\in\{1,2\}$ implies that $\overline{\partial}_{w_{ik}^{\alpha \beta}} F_{\star, lj} = 0$ for all $1\leq i,k \leq N$ and $1 \leq \alpha, \beta \leq n$.
  
  With this notation, the cumulant expansion \eqref{eq:330} for $R=3$ reads 
  \begin{multline}
    \label{eq:334}
     \EE\Big[ W_{il} F_{\star,lj}\Big]
     =
      \EE \bigg[\int_{0}^{1}\big(\nabla^{il}_{\Wt_{il}} + \nabla^{li}_{\Wt_{li}}\big)(\mathbbm{1}_{s\le 1}-\EE_{\Wbt}) \Wt_{il} ds \, F_{\star,lj} (W_{il},W_{li})
    \\ 
      +
      \sum_{\sigma_{1},\sigma_{2}\in\{il,li\}}
      \int_{[0,1]^{2}}\nabla^{\sigma_{1}}_{\Wt_{\sigma_{1}}}(\mathbbm{1}_{s\le t_{1}}-\EE_{\Wbt}) \nabla^{\sigma_{2}}_{\Wt_{\sigma_{2}}}(\mathbbm{1}_{t_{1}\le 1}-\EE_{\Wbt}) \Wt_{il} dt_1ds \,F_{\star,lj} (W_{il},W_{li})
    \\ 
      +
      \sum_{\sigma_{1},\sigma_{2},\sigma_{3}\in\{il,li\}}\int_{0}^{1} K^{\nabla,(\sigma_{1},\sigma_{2},\sigma_{3})}_{4,s}[W_{il},(W_{\sigma_{1}},W_{\sigma_{2}},W_{\sigma_{3}})]  F_{\star,lj} (sW_{il}, sW_{li})  ds\bigg]
      ,
  \end{multline}
  where we introduced a random differential operator
  \begin{equation*}
    K^{\nabla,\sigmab}_{4,s}[V_{0},\Vb]
    :=
    \int_{0}^{1} \int_{0}^{1} \nabla_{V_{1}}^{\sigma_{1}}(\mathbbm{1}_{s\le t_{2}}-\EE) \nabla_{V_{2}}^{\sigma_{2}}(\mathbbm{1}_{t_{2}\le t_{1}}-\EE) \nabla_{V_{3}}^{\sigma_{3}}(\mathbbm{1}_{t_{1}\le 1}-\EE)  V_{0} dt_1 dt_2
  \end{equation*}
  with random matrices $V_{0}$, $(V_1,V_2,V_3)=:\Vb$ and indices $\sigmab := (\sigma_{1},\sigma_{2},\sigma_{3})$.
  In the above formulas $\Wbt$ is an independent copy of $\Wb$, and $\EE_{\Wbt}$ denotes the partial expectation with respect to $\Wbt$.
  In \eqref{eq:334} we also used that $W_{il}$ is centered, which implies that the term corresponding to $m=0$ in \eqref{eq:330} vanishes.

  We now treat each term on the right-hand side of \eqref{eq:334} individually.
  From the direct computation of the first term we see that for $\sigma_{1} \in \{il,li\}$
  \begin{align*}
    \EE \bigg[
    \int_{0}^{1}\nabla^{\sigma_{1}}_{\Wt_{\sigma_{1}}} (\mathbbm{1}_{s\le 1}-\EE_{\Wbt}) \Wt_{il} ds F_{\star,lj} (W_{il},W_{li})\bigg]
    =
    \EE \bigg[
    \Wt_{il} \nabla^{\sigma_{1}}_{\Wt_{\sigma_{1}}} F_{\star,lj} (W_{il},W_{li})\bigg]
    .
  \end{align*}
  Taking the sum of the above expression for $\sigma_{1} \in \{il, li\}$ and $l \in \{1,\ldots, N\}$, and using the independence of $\Wt_{il}$ and $\Wt_{ab}$ for $ab \notin \{il,li\}$ gives $\EE\big[ \big( \Wbt \nabla_{\Wbt}\Fcc_{\star}(\Wb) \big)_{ij} \big]$.
  Together with \eqref{eq:333} we recover the first term in \eqref{eq:41}.

  Similar computations for the second term give
  \begin{align}
    \nonumber 
    \EE\bigg[\sum_{l=1}^{N} \sum_{\sigma_{1},\sigma_{2}\in\{il,li\}}
    \int_{[0,1]^{2}}\nabla^{\sigma_{1}}_{\Wt_{\sigma_{1}}}(\mathbbm{1}_{s\le t_{1}}-\EE_{\Wbt}) \nabla^{\sigma_{2}}_{\Wt_{\sigma_{2}}}(\mathbbm{1}_{t_{1}\le 1}-\EE_{\Wbt}) \Wt_{il} dt_1ds \,F_{\star,lj} (W_{il},W_{li}) \bigg]
    \\ \label{eq:338}
      =
      \frac{1}{2} \EE\bigg[\sum_{l=1}^{N} \sum_{\sigma_{1},\sigma_{2}\in\{il,li\}}  \Wt_{il} \nabla^{\sigma_{1}}_{\Wt_{\sigma_{1}}} \nabla^{\sigma_{2}}_{\Wt_{\sigma_{2}}}  F_{\star,lj} (\Wb) \bigg]
      =
    \frac{1}{2}\EE\Big[ \Big( \Wbt \big(\nabla_{\Wbt}\big)^{2}\Fcc_{\star}(\Wb) \Big)_{ij} \Big].
  \end{align}
  We now estimate the above expression when $F_{1,lj}(\Wb) = G_{lj}(z)$, i.e., $\Fcc_{1}(\Wb) = \Gb(z)$.
  From the properties of the resolvent \eqref{eq:56} we find that 
  \begin{equation}
  \label{eq:340}
  \nabla^{\sigma_1}_{V_1} \dots \nabla_{V_k}^{\sigma_k} G_{lj}
  =
  (-1)^{k} \sum_{\tau \in S_k}G_{li_{\tau(1)}} V_{\tau(1)} G_{j_{\tau(2)} i_{\tau(2)}} \cdots G_{j_{\tau(k-1)}i_{\tau(k)}}V_{\tau(k)} G_{j_{\tau(k)}j}
\end{equation}
for any $V_{1},\ldots, V_{k}\in \CC^{n\times n}$ and double indices $\sigma_p =i_pj_p$. 
   Using the local law \eqref{eq:32} together with \eqref{eq:93}, \eqref{eq:340} and moment bounds \eqref{eq:4} we obtain for $\sigma_{1},\sigma_{2}\in \{il, li\}$ the estimates
   \begin{equation}
     \label{eq:341}
     \Wt_{il} \nabla^{\sigma_{1}}_{\Wt_{\sigma_{1}}} \nabla^{\sigma_{2}}_{\Wt_{\sigma_{2}}}  G_{lj} (\Wb)
     =
     \left\{
       \begin{array}{ll}
         O_{\prec}\Big( \frac{1}{N^{3/2}} \Big)
         \, , & \quad l=j\, , 
         \\ 
         O_{\prec}\bigg( \frac{1}{\sqrt{N |\Im z|}}\, \frac{1}{N^{3/2}} \bigg) \,,              &
                \quad l\neq j \,.
       \end{array}
       \right.
   \end{equation}
   uniformly on $z\in \Omega$.
   When $z \in \Omega$ the functions $G_{ij}(z)$ are deterministically bounded by $N$, thus the stochastic domination bounds \eqref{eq:341} also hold in expectation.
   Therefore, if in \eqref{eq:338} we take the sum with respect to $l \in \{1,\ldots, N\}$ and use the estimate \eqref{eq:341} in expectation, we obtain the bound
   \begin{equation}
     \label{eq:342}
     \bigg\|\EE\Big[\frac{1}{2} \Wbt  (\nabla_{\Wbt})^2 \Gb \Big]\bigg\|_{\max}
     =
     O_{\prec}\Big(\frac{1}{N |\Im z|^{1/2}} \Big)
     =
     O_{\prec}\Big(\frac{N^{\tau/2}}{N \sqrt{\eta_{0}}} \Big)
     .
   \end{equation}

   To analyze the case $\Fcc_{2}(\Wb) = \Gb \efr(t)$, $F_{2,lj}(\Wb) = G_{lj}(z)\efr(t)$, we derive convenient formulas for the repeated directional derivatives of $\efr(t)$, similar to \eqref{eq:340}.
For this, we define 
\begin{equation*}
\begin{split}
  \Ic([\sigma,V]) &:= \sum_{\tau \in S_k}(-1)^k \frac{\imu t}{\pi} \int\displaylimits_\Omega \frac{\partial \tilde{f}(\zeta)}{\partial \overline{\zeta}}\Tr
  \sum_{l=1}^N G_{li_{\tau(1)}}(\zeta) V_{\tau(1)}G_{j_{\tau(1)}i_{\tau(2)}}(\zeta) \cdots V_{\tau(k)}G_{j_{\tau(k)}l}(\zeta) \dzeta
\\
&=
\frac{(-1)^{k+1}}{k}\frac{\imu t}{\pi} \sum_{\tau \in S_k}\int\displaylimits_\Omega \frac{\partial \tilde{f}(\zeta)}{\partial \overline{\zeta}}\frac{\partial}{\partial \zeta}\Tr G_{j_{\tau(k)}i_{\tau(1)}}(\zeta) V_{\tau(1)} \cdots G_{j_{\tau(k-1)}i_{\tau(k)}}(\zeta)V_{\tau(k)}
\dzeta
\end{split}
\end{equation*}
for  a multiset $[\sigma,V]: = [(\sigma_1,V_1),\dots, (\sigma_k,V_k)]$, where $\sigma_{p} = i_{p}j_{p}$ are double indices, and $V_{1},\ldots, V_{k} \in \CC^{n\times n}$. 
In particular, we have
\begin{equation*}
  \nabla^{\sigma_0}_{V_0}\Ic([{\sigma},{V}])
  =
  \Ic\big( \big[(\sigma_0, V_{0}), (\sigma_{1}, V_{1}), \ldots, (\sigma_{k},V_{k}) \big] \big)\,.  
\end{equation*}
Using this and $\nabla^{\sigma_1}_{V_1} \efr(t) = \efr(t) \Ic\big([(\sigma_{1},V_{1})]\big)$ we see by induction that
\begin{equation}
  \label{eq:346}
  \nabla^{\sigma_1}_{V_1} \dots \nabla^{\sigma_k}_{V_k}  \efr(t)
  =
  \efr(t)\sum_{\pi \in  \Pc([\sigma,V])}\Ic(\pi)\,,   
\end{equation}
where $\Pc([\sigma,V])=\mathcal{P}([(\sigma_1,V_1),\dots, (\sigma_k,V_k)])$ denotes all partitions of $[(\sigma_1,V_1),\dots, (\sigma_k,V_k)]$, and for $\pi \in \mathcal{P}([\sigma,V])$  we set
\begin{equation}
  \label{eq:347}
  \mathcal{I}(\pi) = \prod_{\lambda \in \pi }\mathcal{I}(\lambda)\,.
\end{equation}
From \eqref{eq:341} and $|\efr(t)|\leq 1$ we have that for all $i,j,l\in \{1,\ldots, N\}$ and $\sigma_{1},\sigma_{2} \in\{il,li\}$  the estimate
\begin{equation}
  \label{eq:348}
  \efr(t) \Wt_{il} \nabla^{\sigma_{1}}_{\Wt_{\sigma_{1}}} \nabla^{\sigma_{2}}_{\Wt_{\sigma_{2}}}  G_{lj}
  =
  \left\{
    \begin{array}{ll}
      O_{\prec}\Big( \frac{1}{N^{3/2}} \Big)
      \, , & \quad l=j\, ,
      \\ 
      O_{\prec}\bigg( \frac{N^{\tau/2}}{\sqrt{N \eta_{0}}}\, \frac{1}{N^{3/2}} \bigg)
      \, , & \quad l\neq j \, ,
    \end{array}
    \right.
\end{equation}
   holds uniformly on $z\in \Omega$.
   For the mixed derivatives \eqref{eq:340} and \eqref{eq:346} yield
   \begin{equation}
     \label{eq:349}
     \Wt_{il} \Big(\nabla^{\sigma_{1}}_{\Wt_{\sigma_{1}}}  G_{lj} \Big) \Big( \nabla^{\sigma_{2}}_{\Wt_{\sigma_{2}}} \efr(t)\Big)
     =
     -\Wt_{il}  G_{li_{1}}  \Wt_{i_{1}j_{1}} G_{j_{1}j}  \, \efr(t) \, \Ic\big([(\sigma_{2},\Wt_{\sigma_{2}})]\big)
     .
   \end{equation}
      The local law \eqref{eq:32} implies that for any $i,l \in \{1,\ldots, N\}$, $\sigma_{2}=i_2j_2 \in \{il, li\}$, and  $A \in \CC^{n\times n}$ 
   \begin{multline}
     \label{eq:350}
     \Ic\big([(\sigma_{2},A)]\big) =  \frac{\imu t}{\pi} \int_{\Omega} \frac{\partial \ft(\zeta)}{\partial \overline{\zeta}} \frac{\partial}{\partial \zeta} \Tr \Big[ G_{j_{2}i_{2}}(\zeta) A \Big] \dzeta
     =
     \left\{
       \begin{array}{ll}
         O_{\prec}\Big(N^{\tau}|t|  \|A\|\Big),& \,l=i,
         \\
         O_{\prec}\bigg(\frac{N^{3\tau/2}|t| \|A\|}{\sqrt{N\eta_0}}\bigg)
         ,& \,l\neq i,
       \end{array}
       \right.
   \end{multline}
    where we applied Stokes' theorem to rewrite the above expression using a contour integral as in \eqref{eq:87}.
    Combining \eqref{eq:349}, \eqref{eq:350}, the local law bounds \eqref{eq:32} for $G$  and the bounds for $\Wt$ in \eqref{eq:2}-\eqref{eq:4} and \eqref{eq:93} we get
    \begin{equation}
      \label{eq:351}
      \Wt_{il} \Big(\nabla^{\sigma_{1}}_{\Wt_{\sigma_{1}}}  G_{lj} \Big) \Big( \nabla^{\sigma_{2}}_{\Wt_{\sigma_{2}}} \efr(t)\Big)
      =
      \left\{
        \begin{array}{ll}
          O_{\prec}\bigg(\frac{N^{\tau}|t|}{N^{3/2}}\bigg)
          \, ,& \quad l = i,
          \\
          O_{\prec}\bigg(\frac{1}{N^{3/2}} \, \frac{N^{3\tau/2}|t|}{\sqrt{N\eta_{0}}} \bigg)
          \, , & \quad l \neq i \, .
        \end{array}
        \right.
      \end{equation}

      Finally, using \eqref{eq:346}, the structure of $\Ic[\sigma,V]$, and applying Stoke's theorem again  we find that for all $\sigma_{1},\sigma_{2} \in \{il, li\}$
      \begin{equation*}
        \Ic\Big(\big[(\sigma_{1}, \Wt_{\sigma_{1}}),(\sigma_{2},\Wt_{\sigma_{2}})\big]\Big)
        =
        O_{\prec}\bigg(\frac{N^{\tau} |t|}{N} \bigg)
        .
      \end{equation*}
      Together with \eqref{eq:350}, \eqref{eq:346} and \eqref{eq:347}
      this gives the bound for the second order derivatives
      \begin{align}
        \label{eq:353}
        \nabla^{\sigma_{1}}_{\Wt_{\sigma_{1}}}  \nabla^{\sigma_{2}}_{\Wt_{\sigma_{2}}} \efr(t)
        =
         O_{\prec}\bigg(\frac{N^{2\tau}|t|^2}{N} + \frac{N^{3\tau}|t|^2}{N^2 \eta_{0}} + \frac{N^{\tau}|t|}{N} \bigg)
        =
        O_{\prec}\bigg(\frac{N^{2\tau}(|t|+|t|^2)}{N} \bigg)
        \, ,
      \end{align}
      where we also used that $\tau < 1-\gamma$ to absorb all three terms in \eqref{eq:353} into one.
        Now \eqref{eq:353}, \eqref{eq:32}, \eqref{eq:2}-\eqref{eq:4} and \eqref{eq:93} imply
        \begin{equation}
          \label{eq:354}
        \Wt_{il} G_{lj}\nabla^{\sigma_{1}}_{\Wt_{\sigma_{1}}}  \nabla^{\sigma_{2}}_{\Wt_{\sigma_{2}}} \efr(t)
        =
        \left\{
          \begin{array}{ll}
            O_{\prec}\bigg(\frac{N^{2\tau}(|t|+|t|^2)}{N^{3/2}} \bigg)
            \, , & \quad l = j \, ,
            \\
            O_{\prec}\bigg(\frac{N^{2\tau}(|t|+|t|^2)}{N^{3/2}} \, \frac{N^{\tau/2}}{\sqrt{N \eta_{0}}} \bigg)
            \, , & \quad l \neq j \, .
          \end{array}
          \right.
        \end{equation}
        From the Leibniz rule, \eqref{eq:348}, \eqref{eq:351} and \eqref{eq:354} we establish the bound of the term in \eqref{eq:338} for $F_{2,lj} = G_{lj} \efr(t)$, namely
        \begin{align*}
          \sum_{l=1}^{N} \sum_{\sigma_{1},\sigma_{2}\in\{il,li\}}  \Wt_{il} \nabla^{\sigma_{1}}_{\Wt_{\sigma_{1}}} \nabla^{\sigma_{2}}_{\Wt_{\sigma_{2}}}  \Big( G_{lj} \efr(t) \Big)
          &=
            O_{\prec}\bigg( \frac{N^{2\tau}(1+|t|^2)}{N^{3/2}} + \frac{N^{5\tau/2}(1+|t|^2)}{N\sqrt{\eta_{0}}} \bigg)
            \\
          &=
          O_{\prec}\bigg(\frac{N^{5\tau/2}(1+|t|^2)}{N\sqrt{\eta_{0}}} \bigg)
        \end{align*}
        holding uniformly for $z\in \Omega$ and  $i,j \in \{1,\ldots, N\}$.
        Using the same argument as in the case $\star = 1$ to extend the above bound to its expectation we conclude that
        \begin{equation}
          \label{eq:357}
          \bigg\|\EE\Big[\frac{1}{2} \Wbt  (\nabla_{\Wbt})^2 \big(\Gb \efr(t) \big) \Big]\bigg\|_{\max}
          =
          O_{\prec}\bigg(\frac{N^{5\tau/2}(1+|t|^2)}{N\sqrt{\eta_{0}}} \bigg)
        \end{equation}
        uniformly for $z\in \Omega$.

It remains to estimate the last term in \eqref{eq:334}.
The local law bound $\|\Gb\|_{\max} \prec 1$ implies
\begin{equation*}
  |\nabla^{\sigma_1}_{V_1} \dots \nabla^{\sigma_k}_{V_k}  \efr(t)|
  \prec
  (1+ |t|^k)N^{k\tau} \prod_{i=1}^{k}\|V_{i}\|
  \,, \qquad
  \| \nabla^{\sigma_1}_{V_1} \dots \nabla_{V_k}^{\sigma_k} G_{ab} \|
  \prec
   \prod_{i=1}^{k}\|V_{i}\|
  .
\end{equation*}
If we now use $\|W_{ab}\|\prec N^{-1/2}$ following from \eqref{eq:2}-\eqref{eq:4}, we see that the term in the remainder of \eqref{eq:334} satisfies the bound
\begin{equation*}
  \| K^{\nabla,(\sigma_{1},\sigma_{2},\sigma_{3})}_{4,s}[W_{il},(W_{\sigma_{1}},W_{\sigma_{2}},W_{\sigma_{3}})]  F_{\star,lj} (sW_{il}, sW_{li})\|
  =
  O_{\prec}\Big(\frac{(1+|t|^{3})N^{3\tau}}{N^{2}}\Big)
\end{equation*}
uniformly for $i,l,j\in \{1,\ldots, N\}$, $\sigma_{1},\sigma_{2},\sigma_{3}\in \{il, li\}$, $s\in (0,1)$ and $z\in \Omega$.
After plugging this bound into \eqref{eq:334}, taking the sum for $l\in \{1,\ldots, N\}$ in \eqref{eq:333} and using \eqref{eq:338} we get the following expansion formula
\begin{align*}
  \EE \Big[ \Wb \Fcc_{\star}(\Wb)\Big]
  =
        \EE \Big[ \Wbt \, \nabla_{\Wbt} \Fcc_{\star}(\Wb) \Big]
     +
        \frac{1}{2}  \EE\Big[\Wbt  (\nabla_{\Wbt})^2  \Fcc_{\star}(\Wb)\Big]
        + \errA(z)
        ,
    \end{align*}
with $\|\errA(z)\|_{\rm max} \lesssim (1+|t|^3)N^{3\tau} N^{-1} $. 
We finish the proof by denoting
\begin{equation*}
  \errJ_{\star}(z)
  :=
  \frac{1}{2}  \EE\Big[\Wbt  (\nabla_{\Wbt})^2  \Fcc_{\star}(\Wb)\Big]
        + \errA(z)
      \end{equation*}
      and using the bounds \eqref{eq:342} and \eqref{eq:357}.
\end{proof}

\bibliographystyle{abbrv}
\bibliography{./bib.bib}

\end{document}